\newcommand{\mathcircumflex}[0]{\mbox{\^{}}}
\providecommand{\tabularnewline}{\\}
\newenvironment{cellvarwidth}[1][t]
    {\begin{varwidth}[#1]{\linewidth}}
    {\@finalstrut\@arstrutbox\end{varwidth}}
\theoremstyle{plain}
\newtheorem*{conjecture*}{\protect\conjecturename}
\newcommand\thmsname{\protect\theoremname}
\newcommand\nm@thmtype{theorem}
\theoremstyle{plain}
\newenvironment{namedthm}[1][Undefined Theorem Name]{
  \ifx{#1}{Undefined Theorem Name}\renewcommand\nm@thmtype{theorem*}
  \else\renewcommand\thmsname{#1}\renewcommand\nm@thmtype{namedtheorem}
  \fi
  \begin{\nm@thmtype}}
  {\end{\nm@thmtype}}
\theoremstyle{remark}
\newtheorem*{acknowledgement*}{\protect\acknowledgementname}
\theoremstyle{plain}
\newtheorem{thm}{\protect\theoremname}[section]
\theoremstyle{definition}
\newtheorem{defn}[thm]{\protect\definitionname}
\theoremstyle{remark}
\newtheorem{notation}[thm]{\protect\notationname}
\theoremstyle{definition}
\newtheorem{example}[thm]{\protect\examplename}
\theoremstyle{plain}
\newtheorem{prop}[thm]{\protect\propositionname}
\theoremstyle{plain}
\newtheorem{lem}[thm]{\protect\lemmaname}
\theoremstyle{plain}
\newtheorem{cor}[thm]{\protect\corollaryname}
\theoremstyle{remark}
\newtheorem{rem}[thm]{\protect\remarkname}
\newcommand{\Orbitize}[1]{\mathcal{O}\left(#1\right)}
\date{}
\newcommand{\Hom}{\operatorname{Hom}}
\newcommand{\Nat}{\operatorname{Nat}}
\newcommand{\limn}[1][n]{\operatorname{lim}^{#1}}
\newcommand{\colim}{\operatorname{colim}}
\newcommand{\Rep}{\operatorname{Rep}}
\newcommand{\Ext}{\operatorname{Ext}}
\newcommand{\Tor}{\operatorname{Tor}}
\newcommand{\Tot}{\operatorname{Tot}}
\newcommand{\Aut}{\operatorname{Aut}}
\newcommand{\Inn}{\operatorname{Inn}}
\newcommand{\Id}{\operatorname{Id}}
\newcommand{\Ob}{\operatorname{Ob}}
\newcommand{\Syl}{\operatorname{Syl}}
\newcommand{\Sym}{\operatorname{Sym}}
\newcommand{\Sol}{\operatorname{Sol}}
\newcommand{\Spin}{\operatorname{Spin}}
\renewcommand{\mod}{\operatorname{-mod}}
\newcommand{\hocolim}{\operatorname{hocolim}}
\newcommand{\Ab}{\operatorname{Ab}}
\newcommand{\C}{\mathcal{C}}
\newcommand{\lui}[2]{\prescript{#1}{}{#2}}
\newcommand{\R}{\mathcal{R}}
\newcommand{\Fp}[1][p]{\mathbb{F}_{#1}}
\newcommand{\Fd}{\Fp[2]}
\newcommand{\Ff}{\Fp[5]}
\newcommand{\Fpb}[1][p]{\bold{F}_{#1}}
\newcommand{\Ffb}{\Fpb[5]}
\newcommand{\F}{\mathcal{F}}
\renewcommand{\L}{\mathcal{L}}
\newcommand{\OF}[1][\F]{\Orbitize{#1}}
\newcommand{\OFc}[1][\F]{\Orbitize{#1^c}}
\newcommand{\OFD}[2]{\mathcal{O}_{#2}\left(#1\right)}
\newcommand{\OFC}[1][\F]{\OFD{#1}{\C}}
\newcommand{\Fc}{\F^{\text{c}}}
\newcommand{\Fcr}{\F^{\text{cr}}}
\newcommand{\FAB}[2]{\F_{#1}\left(#2\right)}
\newcommand{\FSG}[1][G]{\FAB{S}{#1}}
\newcommand{\T}[1][T]{\mathcal{#1}}
\newcommand{\G}[1][G]{\mathcal{#1}}
\newcommand{\FG}{\mathcal{FG}}
\renewcommand{\Rep}{\operatorname{Rep}}
\newcommand{\Grph}{\operatorname{Grph}}
\newcommand{\Grp}{\operatorname{Grp}}
\newcommand{\CGp}[1][p]{C_G^{#1',#1}}
\providecommand{\acknowledgementname}{Acknowledgement}
\providecommand{\conjecturename}{Conjecture}
\providecommand{\corollaryname}{Corollary}
\providecommand{\definitionname}{Definition}
\providecommand{\examplename}{Example}
\providecommand{\lemmaname}{Lemma}
\providecommand{\notationname}{Notation}
\providecommand{\propositionname}{Proposition}
\providecommand{\remarkname}{Remark}
\providecommand{\theoremname}{Theorem}
\begin{document}
\title{Computing higher limits over the fusion orbit category via amalgams}
\author{Marco Praderio Bova}
\maketitle
\begin{abstract}
We study higher limits over the centric orbit category of a fusion
system realized by an amalgamated product. In so doing we provide
a novel technique for studying the Diaz-Park sharpness conjecture
(see \cite{DiazPark15}) and prove this conjecture (in the case of
the cohomology Mackey functors) for all the Clelland-Parker and Parker-Stroth
fusion systems. This complements the work started by Henke, Libmand
and Lynd in \cite{HLL23}. We further use the developed technique
to study the Benson-Solomon fusion systems thus relating higher limits
over the centric fusion orbit category of these systems with the signalizer
functors described in \cite{AschCher10}. We believe that the proposed
technique can, in future work, be used as a first step in an induction
argument that can bring us closer to providing an answer to this conjecture.
\end{abstract}

\section{Introduction}

Saturated fusion systems are a type of category that was first introduced
by Puig (see \cite{Puig06}) as a common framework to study fusion
of both $p$-subgroups and blocks of a finite group. Although originally
intended as a tool for modular representation theory, fusion systems
have found multiple applications in various other areas of both algebra
and topology (see \cite{AKO11} for a more detail). It was first conjectured
by Broto, Levi and Oliver (see \cite{BLO03}) and then proved by Chermack
(see \cite{Cher13}) that every saturated fusion system $\F$ has
a unique associated ``\emph{$p$-completed classifying space}''
$B\F$. This leads to the existence of a Bousfield-Kahn spectral sequence
of the form
\begin{equation}
\limn[i]_{\OFc}\left(H^{j}\left(-;\Fp\right)\right)\Rightarrow H^{i+j}\left(B\F;\Fp\right),\label{eq:cohomology-spectral-sequence}
\end{equation}
where $\Fp$ denotes the finite field of $p$-elements and $\OFc$
is the centric orbit category of $\F$ (see Definition \ref{def:general-fusion-system-notions}(\ref{enu:def-centric-orbit-category})).
Similarly, for every finite group $G$, we have the following spectral
sequence 
\begin{equation}
\limn[i]_{\mathcal{O}_{p}^{c}\left(G\right)}\left(H^{j}\left(-;\Fp\right)\right)\Rightarrow H^{i+j}\left(BG;\Fp\right).\label{eq:cohomology-spectral-sequence-groups}
\end{equation}
Dwyer proved in \cite[Theorem 10.3]{Dwyer98} that the spectral sequence
of Equation (\ref{eq:cohomology-spectral-sequence-groups}) is sharp.
That is $\limn[i]_{\mathcal{O}_{p}^{c}\left(G\right)}\left(H^{j}\left(-;\Fp\right)\right)=0$
for every $i\ge1$. As an immediate consequence, we obtain the isomorphism
$\lim_{\mathcal{O}_{p}^{c}\left(G\right)}\left(H^{n}\left(-;\Fp\right)\right)\cong H^{n}\left(BG;\Fp\right)$).
We know from \cite[Theorem 5.8]{BLO03}, that a similar stable elements
theorem also holds for saturated fusion systems. It is therefore natural
to ask weather the spectral sequence of Equation (\ref{eq:cohomology-spectral-sequence})
is also sharp. This is one of the questions listed in \cite[Section III.7]{AKO11}
and remains open at the time of writing. Based on a previous result
of Jackowsky and McClure (see \cite[Corollary 5.16]{JackowskiMcClure92}),
Diaz and Park conjectured in \cite{DiazPark15} that a stronger result
actually holds.
\begin{conjecture*}[Sharpness for fusion systems]
Let $S$ be a finite $p$-group, let $\F$ be a saturated fusion
system over $S$ and let $M=\left(M_{*},M^{*}\right)$ be a Mackey
functor over $\F$ with coefficients in $\Fp$ (see \cite[Definition 2.1]{DiazPark15}).
Then $\limn_{\OFc}\left(M^{*}\downarrow_{\OFc}^{\OF}\right)=0$ for
every $n\ge1$.
\end{conjecture*}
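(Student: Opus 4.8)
The plan is to reduce the conjecture to the finite-group case, where it is known—for the cohomology functor $H^{j}(-;\Fp)$ this is exactly Dwyer's sharpness of the spectral sequence in Equation~(\ref{eq:cohomology-spectral-sequence-groups}), and in the full Mackey-functor generality the corresponding vanishing over $\mathcal{O}_{p}^{c}(G)$ for a finite group $G$ is likewise available. The reduction mechanism is the amalgam structure: I would work with those saturated fusion systems that can be written as $\F=\F_S(G)$ for an amalgamated product $G=G_1*_{G_0}G_2$ of finite groups with $S$ a common Sylow $p$-subgroup of $G_0$, $G_1$ and $G_2$. Both the Clelland-Parker and the Parker-Stroth families are of this form, and the guiding geometric picture is that $BG$ is the homotopy pushout of $BG_1\leftarrow BG_0\rightarrow BG_2$, so a Mayer-Vietoris mechanism ought to transport sharpness from the finite vertex and edge groups up to $\F$.

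First I would set up the dictionary between $\F$ and the amalgam by letting $G$ act on its Bass-Serre tree $T$: a finite $p$-subgroup fixes a vertex of $T$, and for an $\F$-centric subgroup $P\le S$ the automorphism group $\Aut_\F(P)$ is assembled out of the vertex stabilizers $G_1,G_2$ and the edge stabilizer $G_0$. This lets me compare the centric orbit category $\OFc$ with the finite orbit categories $\mathcal{O}_{p}^{c}(G_1)$, $\mathcal{O}_{p}^{c}(G_2)$ and $\mathcal{O}_{p}^{c}(G_0)$. The central construction is then a long exact \emph{Mayer-Vietoris} sequence
\begin{multline*}
\cdots\to\limn[n-1]_{\mathcal{O}_{p}^{c}\left(G_0\right)}\left(M^{*}\right)\to\limn[n]_{\OFc}\left(M^{*}\right)\\
\to\limn[n]_{\mathcal{O}_{p}^{c}\left(G_1\right)}\left(M^{*}\right)\oplus\limn[n]_{\mathcal{O}_{p}^{c}\left(G_2\right)}\left(M^{*}\right)\to\limn[n]_{\mathcal{O}_{p}^{c}\left(G_0\right)}\left(M^{*}\right)\to\cdots,
\end{multline*}
obtained by presenting $\OFc$ (or its associated centric linking system) as a homotopy pushout indexed by the two vertices and the edge of $T/G$ and then comparing the resulting Bousfield-Kan derived-limit spectral sequences, using that the Mackey restriction maps of $M$ are compatible with the amalgam inclusions.

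Granting this sequence, the endgame is short. The finite-group instance of the conjecture gives $\limn[n]_{\mathcal{O}_{p}^{c}\left(G_i\right)}\left(M^{*}\right)=0$ for every $n\ge1$ and each $i\in\{0,1,2\}$. Hence for $n\ge2$ the incoming term $\limn[n-1]_{\mathcal{O}_{p}^{c}(G_0)}(M^{*})$ and the outgoing term $\limn[n]_{\mathcal{O}_{p}^{c}(G_1)}(M^{*})\oplus\limn[n]_{\mathcal{O}_{p}^{c}(G_2)}(M^{*})$ both vanish, forcing $\limn[n]_{\OFc}(M^{*})=0$. The only remaining case is $n=1$, which by exactness reduces to showing that the connecting homomorphism out of $\limn[0]_{\mathcal{O}_{p}^{c}(G_0)}(M^{*})$ is zero, equivalently that the map $\limn[0]_{\mathcal{O}_{p}^{c}(G_1)}(M^{*})\oplus\limn[0]_{\mathcal{O}_{p}^{c}(G_2)}(M^{*})\to\limn[0]_{\mathcal{O}_{p}^{c}(G_0)}(M^{*})$ is surjective; since all three $\limn[0]$ terms are computed by stable elements, this surjectivity I would read off from the stable elements theorem (\cite[Theorem 5.8]{BLO03}) together with the fact that $\F$ is generated by the fusion of $G_1$ and $G_2$.

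I expect the difficulty to sit in two places. The technical crux of the amalgam method is establishing the Mayer-Vietoris sequence while correctly matching centricity: an $\F$-centric subgroup need not be $p$-centric in any single vertex group, so the identification of $\OFc$ with a pushout of the finite orbit categories demands careful bookkeeping of the tree action and of which subgroups each vertex ``sees''—this is where I anticipate most of the work, and where the Mackey structure (rather than bare cohomology) is genuinely used. The deeper obstacle, and the reason the full conjecture remains out of reach, is the preliminary reduction: only certain (notably reduced or simple) fusion systems are presently known to admit an amalgam realization, so for a general $\F$ one would additionally need a minimal-counterexample argument feeding into Aschbacher's classification program. Accordingly I would first settle the conjecture for the amalgam-realizable families and regard that as the base step of a longer induction.
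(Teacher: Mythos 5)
Your attempt targets the full Diaz--Park conjecture, but note first that the paper does not prove this statement either: it is stated as an open conjecture, and the paper's actual contribution (Theorems \hyperref[thm:A]{A}--\hyperref[thm:D]{D}) is to prove special cases, namely vanishing of the higher limits of the cohomology functors $H^{j}\left(-;\Fp\right)$ for the Clelland--Parker and Parker--Stroth systems, plus a computation (not a vanishing result) for Benson--Solomon. The substantive gap in your proposal is the Mayer--Vietoris sequence. The correct exact sequence produced by the amalgam structure is the one in Theorem \hyperref[thm:A]{A} (Proposition \ref{prop:long-exact-sequence}): it contains the extra term $\underset{\OFc}{\Nat}\left(\CGp,M\right)$, and in higher degrees it yields isomorphisms $\Ext_{\Fp\OFc}^{n}\left(\CGp,M\right)\cong\limn[n+2]_{\OFc}\left(M\right)$ rather than vanishing, where $\CGp\left(P\right)=H_{1}\left(\tilde{\T}^{P}/C_{G}\left(P\right);\Fp\right)$ is the first homology of the quotients of the fixed-point subtrees of the Bass--Serre tree. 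Your sequence is what one gets only after assuming all these graphs are trees, i.e.\ $\CGp=0$ on $\F$-centric subgroups. That hypothesis is a genuine geometric input: it holds for Clelland--Parker and Parker--Stroth by \cite[Lemma 3.3 and Theorem 4.9]{ClellandParker10} (this is exactly how the paper deduces Theorem \hyperref[thm:C]{C} from Theorem \hyperref[thm:B]{B}), but it fails for the Benson--Solomon systems, where $\CGp$ is essentially the Aschbacher--Chermak signalizer functor and Theorem \hyperref[thm:D]{D} only relates the higher limits to $\Ext$-groups of that functor. So finite-group sharpness at the vertices and edge plus the homotopy pushout of classifying spaces does not force $\limn[n]_{\OFc}\left(M\right)=0$ for $n\ge2$; the obstruction your proposal never identifies is precisely $\CGp$.

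Two further problems. Your degree-one step needs surjectivity of the restriction of stable elements of $G_{1}$ and $G_{2}$ onto those of $G_{0}$ for an \emph{arbitrary} Mackey functor; the stable elements theorem does not give this. The paper identifies $\limn[1]_{\OFc}\left(M\right)$ with the cokernel $M^{\F_{\boldsymbol{e}}}/\left(\overline{\iota_{S'}^{S}}\left(M^{\F_{\boldsymbol{1}}}\right)+M^{\F_{\boldsymbol{2}}}\right)$ and kills it only for the functors $H^{j}\left(-;\Fp\right)$, via saturation, the Bousfield--Kan spectral sequence converging to $H^{*}\left(B\F;\Fp\right)$ and a finiteness argument (Lemma \ref{lem:vanishing-first-limit}); for a general Mackey functor this cokernel is left standing, which is why even Theorem \hyperref[thm:C]{C} resolves the conjecture for these families only in the cohomology case. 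Moreover, your standing hypothesis that $S$ is a common Sylow $p$-subgroup of $G_{0}$, $G_{1}$ and $G_{2}$ is not satisfied by the families you invoke: the paper assumes only $p\nmid\left[G_{2}:B\right]$, and in both Clelland--Parker and Parker--Stroth the edge Sylow $S'$ is a proper subgroup of $S$. The matching of $\F$-centric subgroups with the vertex and edge categories is handled in the paper not by bookkeeping inside the tree but by the hypothesis $\Fcr_{x}\subseteq\Fc$ together with the change-of-collection result \cite[Proposition 10.5]{Yal22}, verified family by family (Lemmas \ref{lem:clelland-parker-vanishing-subsystems}, \ref{lem:parker-stroth-vanishing-subsystems} and \ref{lem:Benson-Solomon-vanishing-subsystems}). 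Finally, as you concede yourself, even a repaired amalgam argument covers only amalgam-realizable fusion systems and therefore cannot close the conjecture as stated.
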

Since any cohomology functor can be seen as the contravariant part
of a Mackey functor, then the above conjecture is in fact stronger
than sharpness of the spectral sequence in Equation (\ref{eq:cohomology-spectral-sequence}).
Diaz and Park proved in \cite[Theorem B]{DiazPark15} that the sharpness
conjecture holds when $\F$ is realizable (see Definition \ref{def:general-fusion-system-notions}(\ref{enu:realizable-fusion-system})).
Therefore, a negative result to the conjecture would provide a method
of distinguishing between realizable and exotic fusion systems without
relying on the classification of finite simple groups. This would
solve another of the open problems listed in \cite[Section III.7]{AKO11}.
Therefore, regardless of the result, an answer to this conjecture
would be interesting of interest. It is therefore not surprising that
a large amount of articles have been written studying it (see for
instance \cites{CarrDiaz25}{DiazPark15}{GlaLyn25}{GraMar23}{HLL23}{Yal22}).
Interestingly, at the time of writing and to the best of the author's
knowledge, all known results solving the conjecture for particular
families of fusion systems, can be seen as (indirect) applications
of \cite[Corollary 5.16]{JackowskiMcClure92}. The results presented
in this article are therefore novel not only in the sense that they
prove sharpness in cohomology for all Clelland-Parker fusion systems
(complementing \cite[Theorems 1.1 and 5.2]{HLL23}), but also in the
sense that we do so without relying on \cite[Corollary 5.16]{JackowskiMcClure92}
(except for mentions of sharpness for realizable fusion systems).

\medskip{}

Let $B,G_{1},G_{2}$ be finite groups with $B\le G_{1},G_{2}$, let
$G:=G_{1}*_{B}G_{2}$ be the resulting amalgamated product and let
$\tilde{\T}$ be the tree associated to this amalgam (see Definition
\ref{def:orbit-graph}). Since $\tilde{\T}$ admits a natural $G$-action,
it is possible to define for every $P\le G$ both the subgraph $\tilde{\T}^{P}$
of points fixed under the action of $P$ and the quotient graph $\tilde{\T}^{P}/C_{G}\left(P\right)$.
By viewing graphs as $CW$-complexes in the natural way we can then
define (see Definition \ref{def:CGp}) 
\[
\CGp\left(P\right):=H_{1}\left(\tilde{\T}^{P}/C_{G}\left(P\right);\Fp\right).
\]
Take $S'\in\Syl_{p}\left(B\right)$ and $S\ge S'$ such that $S\in\Syl_{p}\left(G_{1}\right)$.
Assume that $p\not|\left[G_{2}:B\right]$ and, in particular, $S'\in\Syl_{p}\left(G_{2}\right)$.
We can define the fusion systems
\begin{align*}
\F_{\boldsymbol{1}} & :=\FSG[G_{1}], & \F_{\boldsymbol{2}} & :=\FAB{S'}{G_{2}},\\
\F_{\boldsymbol{e}} & :=\FAB{S'}{B}, & \F & :=\FSG,
\end{align*}
where we are viewing $G_{1}$ and $G_{2}$ (and therefore also $B$,
$S$ and $S'$) as subgroups of $G$. Let $\mathcal{C}$ be the family
of $\F$-centric subgroups of $S$. For every functor $M:\OFC^{\text{op}}\to\Fp\mod{}$
and every $x\in\left\{ \boldsymbol{1},\boldsymbol{2},\boldsymbol{e}\right\} $
define the $\Fp$-module
\[
M^{\F_{x}}:=\lim_{\OFC[\F_{x}]}\left(M\downarrow_{\OFC[\F_{x}]}^{\OFC}\right),
\]
which we identify with its natural projection into $M\left(S'\right)$
(or into $M\left(S\right)$ if $x=\boldsymbol{1}$). Denoting by $\overline{\iota_{S'}^{S}}$
the universal map from $M^{\F_{\boldsymbol{1}}}$ to $M^{\F_{\boldsymbol{e}}}$,
we have that both $\overline{\iota_{S'}^{S}}\left(M^{\F_{\boldsymbol{1}}}\right)$
and $M^{\F_{\boldsymbol{1}}}$ are subgroups of $M^{\F_{\boldsymbol{e}}}$.
Using spectral sequences and sharpness for realizable fusion systems,
we prove in Section \ref{sec:An-exact-sequence-for=000020higher-limits}
that the following holds.

\phantomsection
\label{thm:A}
\begin{namedthm}[Theorem A]
Let $M^{*}$ be the contravariant part of a Mackey functor over $\F$
with coefficients in $\Fp$ and write $M:=M^{*}\downarrow_{\OFc}^{\OF}$.
Assume that $\Fcr_{x}\subseteq\mathcal{C}$ (see Definition \ref{def:general-fusion-system-notions}(\ref{enu:F-centric-radical}))
for every $x\in\left\{ \boldsymbol{1},\boldsymbol{2},\boldsymbol{e}\right\} $.
Then:
\begin{enumerate}
\item For every $n\ge1$ there exists an isomorphism
\[
\Ext_{\Fp\OFc}^{n}\left(\CGp,M\right)\cong\limn[n+2]_{\OFc}\left(M\right),
\]
where, o the left hand side, we use the equivalence of categories
$\Fp\OFc\mod{}\cong\Fp\mod{}^{\OFc^{\text{op}}}$ to view contravariant
functors as modules.
\item There exists an exact sequence of the form
\[
0\to\underset{\OFc}{\limn[1]}\left(M\right)\to M^{\F_{\boldsymbol{e}}}/\left(\overline{\iota_{S'}^{S}}\left(M^{\F_{\boldsymbol{1}}}\right)+M^{\F_{\boldsymbol{2}}}\right)\to\underset{\OFc}{\Nat}\left(\CGp,M\right)\to\underset{\OFc}{\limn[2]}\left(M\right)\to0.
\]
\end{enumerate}
\end{namedthm}
There exist numerous results (see \cites{BLO06}{ClellandParker10}{Semeraro14}),
that prove that a fusion system such as $\F$ is saturated when the
graph $\tilde{\T}^{P}/C_{G}\left(P\right)$ (or similarly constructed
graphs) is a tree for every $P\in\mathcal{C}$ (see \cite[Theorem 3.2]{ClellandParker10}
for more detail). When this happens, we also have that $\CGp\left(P\right)=0$
and we deduce from Theorem \hyperref[thm:A]{A} that $\underset{\OFc}{\limn}\left(M\right)=0$
for every $n\ge2$. If, moreover, $\F$ is saturated, then $\underset{\OFc}{\limn[1]}\left(H^{j}\left(-;\Fp\right)\right)=0$
for every $j\ge0$ (see Lemma \ref{lem:vanishing-first-limit}). More
precisely we have the following.

\phantomsection
\label{thm:B}
\begin{namedthm}[Theorem B]
Let $M^{*}$ and $M$ be as in Theorem \hyperref[thm:A]{A}. Assume
that $\F$ is saturated, that $\Fcr_{x}\subseteq\mathcal{C}$ for
every $x\in\left\{ \boldsymbol{1},\boldsymbol{2},\boldsymbol{e}\right\} $
and that $\CGp\left(P\right)=0$ for every $P\in\mathcal{C}$. Then
$\limn_{\OFc}\left(M\right)=0$ for every $n\ge2$ while
\[
\underset{\OFc}{\limn[1]}\left(M\right)\cong M^{\F_{\boldsymbol{e}}}/\left(\overline{\iota_{S'}^{S}}\left(M^{\F_{\boldsymbol{1}}}\right)+M^{\F_{\boldsymbol{2}}}\right).
\]
Moreover $\underset{\OFc}{\limn[i]}\left(H^{j}\left(-;\Fp\right)\right)=0$
for every $i\ge1$ and every $j\ge0$.
\end{namedthm}
As an application of Theorem \hyperref[thm:B]{B}, we prove between
Propositions \ref{prop:shar-Clelland-Parker} and \ref{prop:shar-Parker-Stroth}
our third main result.

\phantomsection
\label{thm:C}
\begin{namedthm}[Theorem C]
Let $\F$ be either a Clelland-Parker fusion system (see \cite{ClellandParker10})
or a Parker-Stroth fusion system (see \cite{ParkerStroth15}), then
$\underset{\OFc}{\limn[i]}\left(H^{j}\left(-;\Fp\right)\right)=0$
for every $i\ge1$ and every $j\ge0$.
\end{namedthm}
Theorem \hyperref[thm:C]{C} complements the work started by Hencke,
Libman and Lynd in \cite[Theorems 1.1 and 5.2]{HLL23}, by not only
reproving that higher limits of the cohomology functor vanish for
the Parker-Stroth fusion systems, but also for all the Clelland-Parker
fusion systems (and not only those whose essential subgroups are all
abelian).

As proven by Aschbacher and Chermak (see \cite{AschCher10}) the Benson-Solomon
fusion systems can also be realized by amalgamated products of $2$
finite groups. However, as noted in \cite[end of Section 4]{BLO06},
the graphs $\tilde{\T}^{P}/C_{G}\left(P\right)$ resulting from these
amalgams are not necessarily trees. Thus Theorem \hyperref[thm:B]{B}
cannot be as directly applied in this case. It is however possible
to use Theorem \hyperref[thm:A]{A} in order to prove the fourth and
final main result of this paper.

\phantomsection
\label{thm:D}
\begin{namedthm}[Theorem D]
Let $\F:=\F_{\Sol}$ be a Benson-Solomon fusion system, let $M$
be as in Theorem \hyperref[thm:A]{A}, let $\theta:\OFc^{\text{op}}\to\Grp$
be the signalizer functor associated to $\F$ (as appearing in \cite[page 941 and Theorem 8.8]{AschCher10})
and define $\vartheta:\OFc^{\text{op}}\to\Grp$ as the composition
$\vartheta\left(P\right)=\Ab\left(\theta\left(P\right)\right)\otimes\Fd$.
The following hold:
\begin{enumerate}
\item For every $n\ge1$ there exists an isomorphism
\[
\Ext_{\Fd\OFc[\F]}^{n}\left(\vartheta\left(P\right),M\right)\cong\limn[n+2]_{\OFc[\F]}\left(M\right).
\]
\item There exists an exact sequence of the form
\[
0\to\underset{\OFc}{\limn[1]}\left(M\right)\to M^{\FAB{S}{B}}/\left(M^{\FAB{S}{H}}+M^{\FAB{S}{K}}\right)\to\underset{\OFc}{\Nat}\left(\vartheta,M\right)\to\underset{\OFc}{\limn[2]}\left(M\right)\to0.
\]
Where $H\cong\Spin_{7}\left(5^{2^{n}}\right)$ for some $n\ge0$ ,
$S\in\Syl_{2}\left(H\right)$ and $B\le K$ are finite groups such
that $\left[K:B\right]=3$ and $S\le B\le H$.
\end{enumerate}
\end{namedthm}
We conclude this introduction with a few words about potential future
work. Let $\F$ be a saturated fusion system over $S$, let $\L$
be its associated centric linking system (see \cite[Definition 1.7]{BLO03}),
let $P_{0}:=S$ and number all the $\F$-essential subgoups of $S$
as $P_{1},\dots,P_{n}$. By defining $L_{i}:=\Aut_{\L}\left(P_{i}\right)$
for every $i$, we obtain that each $L_{i}$ is a finite group with
$P_{i}\trianglelefteq L_{i}$ and $N_{S}\left(P_{i}\right)\in\Syl_{p}\left(L_{i}\right)$.
If, moreover, we define $\F_{i}:=\FAB{N_{S}\left(P_{i}\right)}{L_{i}}$
and $\F_{e,i}:=\FAB{N_{S}\left(P_{i}\right)}{N_{S}\left(P_{i}\right)}$,
we obtain that $\Fcr_{i},\Fcr_{e,i}\subseteq\Fc$ (see Poposition
\ref{prop:centric-radical-contains-normal}). Finally, we can inductively
define $G_{0}:=L_{0}$ and $G_{i}:=G_{i-1}*_{N_{S}\left(P_{i}\right)}L_{i}$,
in order to obtain a group $G:=G_{n}$ such that $\F=\FSG$. Is it
possible to prove the sharpness conjecture for $\F$ by iteratively
applying variations of Theorem \hyperref[thm:B]{B} to each $\FSG[G_{i}]$?
In a future article, in writing, the author explores some cases in
which this question has a positive answer. In so doing he obtains
sharpness for a wider range of fusion systems than that presented
in Theorem \hyperref[thm:C]{C}.

\medskip{}

\hspace*{0.5cm}\textbf{Organization of the paper}: In Section \ref{sec:Preliminaries.}
we briefly introduce the notation used throughout the paper as well
as the concepts of trees of groups and trees of fusion systems. We
also start studying the graphs $\tilde{\T}^{P}/C_{G}\left(P\right)$
and provide an alternative description of the functor $\CGp$ which
motivates our choice of notation (see Proposition \ref{prop:first-homology}
and Remark \ref{rem:choice-of-notation}). We continue in Section
\ref{sec:An-exact-sequence-for=000020higher-limits} where we prove
the core results of this paper (namely Theorems \hyperref[thm:A]{A}
and \hyperref[thm:B]{B}). Finally, Section \ref{sec:sharpness-results}
is dedicated to applying Theorems \hyperref[thm:A]{A} and \hyperref[thm:B]{B}
in order to prove Theorems \hyperref[thm:C]{C} and \hyperref[thm:D]{D}.
\begin{acknowledgement*}
The author would like to thank Chris Parker, Jason Semeraro, Martin
Van Beek, Valentina Grazian, Justin Lynd, Andrew Chermak, Guille Carrión
and Antonio Díaz for their comments and insightful conversations.
These lead to realizing that Theorem \hyperref[thm:B]{B} could be
applied to more cases than originally considered and to great simplifications
of some proofs.

During the development of this article the author was supported by
the Oberwolfach Leibniz Fellows grant, and by the TU Dresden university
via the grants U-000902-830-C11-1010103, F-013038-541-000-1010103
and L-000150-721-110-1010100.
\end{acknowledgement*}

\section{\protect\label{sec:Preliminaries.}Background and first results}

We refer the interested reader to \cite[Part I]{AKO11} for an in
depth introduction to fusion systems. In the present document we limit
ourselves to recalling the following widely adopted notation regarding
fusion systems.
\begin{defn}
\label{def:general-fusion-system-notions}\phantom{.}
\begin{enumerate}
\item Let $S$ be a finite $p$-group and let $G$ be a group containing
$S$. We denote by $\FSG$ the \textbf{fusion system generated by
$G$}. That\textbf{ }is $\FSG$ is the category with objects subgropus
of $S$ and morphisms sets given by conjugation by elements of $G$.
\item \label{enu:realizable-fusion-system}If a fusion system $\F$ satisfies
$\F=\FSG$ for some $G$ finite and $S\in\Syl_{p}\left(G\right)$
we say that $\FSG$ is realizable. Otherwise we say that $\F$ is
exotic.
\item Unless explicitly mentioned, the fusion systems systems introduced
are not necessarily saturated. Note however that Clelland-Parker,
Parker-Stroth and Benson-Solomon fusion systems are all saturated.
\item Let $\F$ be fusion system over $S$, we denote by $\OF$ the \textbf{orbit
category of $\F$}. That is $\OF$ is the category with objects the
subgroups of $S$ (just like $\F$) and morphism sets the quotients
\[
\Hom_{\OF}\left(P,Q\right):=Q\backslash\Hom_{\F}\left(P,Q\right).
\]
\item \label{enu:F-centric}A subgroup $P\le S$ is called \textbf{$\F$-centric
}if $C_{S}\left(Q\right)=Z\left(Q\right)$ for every $Q\cong_{\F}P$
(i.e. every $Q\le S$conjugate to $P$ in $\F$).
\item \label{enu:F-centric-radical}A subgroup $P\le S$ is called \textbf{$\F$-centric-radical
}if it is $\F$-centric and $\Aut_{\F}\left(P\right)=\Inn\left(P\right)$.
\item We denote by $\Fc$ the collection of all $\F$-centric subgroups
of $S$ and by $\Fcr$ the collection of all $\F$-centric-radical
subgroups of $S$.
\item \label{enu:def-centric-orbit-category}For any family $\mathcal{C}$
of groups we denote by $\OFC$ the full subcategory of $\OF$ whose
objects are subgroups of $S$ contained in $\mathcal{C}$. If $\mathcal{C}=\Fc$
we write $\OFc:=\OFC$ and call it the \textbf{centric orbit category
of $\F$}.
\end{enumerate}
Throughout this document we will further adopt the following notation.
\end{defn}

\begin{notation}
\phantom{.}
\begin{enumerate}
\item $p$ is a prime number.
\item For every prime power $q$ we denote by $\Fp[q]$ the finite field
of $q$ elements.
\item Unless otherwise specified all modules are right modules and all actions
are right actions.
\item Given a category $\boldsymbol{C}$ and a commutative ring $\R$ we
use the natural equivalence of categories $\R\boldsymbol{C}\mod{}\cong\R\mod{}^{\boldsymbol{C}^{\text{op}}}$
in order to view all finitely generated $\R\boldsymbol{C}$-modules
as contravariant functors from $\boldsymbol{C}$ to the category $\R\mod{}$
of finitely generated $\R$-modules and viceversa.
\end{enumerate}
\end{notation}

\subsection{\protect\label{subsec:trees-of-fusion-systems}Trees of fusion systems.}

Given a finite $p$-group $S$, it is fairly easy to build a fusion
system over $S$. For instance, given any set $X$ of monomorphisms
between subgroups of $S$, there is an obvious way of building the
smallest fusion system $\left\langle X\right\rangle _{S}$ over $S$
containing all the morphisms in $X$. The question of weather or not
a given fusion system is saturated is however much harder. In fact,
finding and classifying such fusion systems has been a topic of much
research (see for example \cites{ClellandParker10}{DiazRuizViruel07}{GraPar25}{GPSB25}{ParkerSemeraro18}{VanBeek25}).
Some of the most common methods used to determine weather or a fusion
system is saturated makes use of what is known as a tree of groups
(see for instance \cites{BLO06}{ClellandParker10}{GPSB25}{Semeraro14}). 

Let $\T$ be a graph and denote by $V\left(\T\right)$ and $E\left(\T\right)$
its vertices and edge set respectively. Then $\T$ can be viewed as
a category (which we also denote by $\T$) by setting $\Ob\left(\T\right)$
to be the disjoint union of $V\left(\T\right)$ and $E\left(\T\right)$
and letting the only non identity morphisms be morphisms $f_{e,v}:e\to v$
for every $e\in E\left(\T\right)$ and every $v\in V\left(\T\right)$
incident to $e$. That is $\Hom_{\T}\left(e,v\right)=\left\{ f_{e,v}\right\} $
if $v$ is incident to $e$ and is the empty set otherwise. 
\begin{defn}
A \textbf{graph of groups} is a pair $\left(\T,\G\right)$ where $\T$
is a graph and $\G$ is a functor from $\T$ (regarded as a category)
to the category $\Grp$ of groups which sends every morphism to a
group monomorphism. The \textbf{completion of $\left(\T,\G\right)$}
is the group $\G_{\T}=\colim_{\T}\G$. If $\T$ is a tree we call
$\left(\T,\G\right)$ a \textbf{tree of groups}.
\end{defn}

\begin{example}
\label{exa:tree-of-groups}$\phantom{.}$
\begin{enumerate}
\item \label{enu:exa-tree-amalgam}Given groups $B,G_{\boldsymbol{1}},G_{\boldsymbol{2}}$
and monomorphisms $f_{\boldsymbol{i}}:B\hookrightarrow G_{\boldsymbol{i}}$,
we can take the tree $\T:=\boldsymbol{1}\stackrel{\boldsymbol{e}}{--}\boldsymbol{2}$
with only two vertices and a single edge between them and define $\G:\T\to\Grp$
by setting $\G\left(\boldsymbol{i}\right)=G_{\boldsymbol{i}}$, $\G\left(\boldsymbol{e}\right)=B$
and $\G\left(f_{\boldsymbol{e},\boldsymbol{i}}\right)=f_{\boldsymbol{i}}$.
Then $\left(\T,\G\right)$ is a graph of groups and its completion
$\G_{\T}$ is the amalgamated product $\G_{\T}=G_{\boldsymbol{1}}*_{B}G_{\boldsymbol{2}}$
defined by the inclusions $f_{\boldsymbol{i}}$.
\item \label{enu:exa-tree-auto}Let $\F$ be a saturated fusion system over
$S$ and let $\T$ be a spanning tree of the poset of subgroups of
$S$ (ordered by inclusion). For every vertex $P\in V\left(\T\right)$
and every edge $\left(P,Q\right)\in E\left(\T\right)$ with $P\le Q$
define $\G\left(P\right)=P\rtimes\Aut_{\F}\left(P\right)$ and $\G\left(\left(P,Q\right)\right)=P$.
Then $\left(\T,\G\right)$ defines a tree of groups (with the obvious
inclusion morphisms) and $\G_{\T}$ satisfies $\FSG[\G_{\T}]=\F$.
\item \label{enu:exa-tree-linking-system}Taking $\L$ the centric linking
system associated to $\F$ and $\T$ a spanning tree of the poset
of $\F$-centric subgroups of $S$ (or any subposet that contains
all the $\F$-esential subgroups). Then we can define $\G$ by setting
$\G\left(P\right)=\Aut_{\L}\left(P\right)$ and $\G\left(P,Q\right)=\Aut_{\L}\left(P<Q\right)$
(that is the subgroup of $\Aut_{\L}\left(Q\right)$ normalizing $P$).
In this case we also have that $\left(\T,\G\right)$ defines a tree
of groups and $\FSG[\G_{\T}]=\F$.
\end{enumerate}
\end{example}

Using the notation of Example \ref{exa:tree-of-groups}(\ref{enu:exa-tree-linking-system}),
we can take $\T$ to be the tree of height $1$ with root $S$ and
leaves all $\F$-centric and fully $\F$-normalized subgroups of $S$.
Then, for every vertex $P$ of $\T$, $S_{P}:=N_{S}\left(P\right)$
is a Sylow $p$-subgroup of $\G\left(P\right)$ and of $\G\left(P<S\right)$
(if $P\not=S$). Thus, for every vertex $P$ of $\T$, we can define
the saturated fusion systems $\FG\left(P\right):=\FAB{S_{P}}{\G\left(P\right)}$
and, if $P\not=S$, $\FG\left(P<S\right):=\FAB{S_{P}}{\G\left(P<S\right)}$.
With this setup we have that $\FSG[\G_{\T}]=\left\langle \FG\left(P\right)\,|\,P\in V\left(\T\right)\right\rangle _{S}$.
That is $\FSG[\G_{\T}]$ is the smallest fusion system over $S$ containing
the fusion subsystems $\FG\left(P\right)$. A similar result can in
fact be transferred to a more general setting (see Proposition \ref{prop:colimit-fs-exists}
below).

Let $\left(\T,\G\right)$ be a tree of groups and for every $X\in\Ob\left(\T\right)$
choose $\mathcal{S}\left(X\right)\in\Syl_{p}\left(\G\left(X\right)\right)$
and define $\FG\left(X\right):=\F_{\mathcal{S}\left(X\right)}\left(\G\left(X\right)\right)$.
For every edge $e\in E\left(\T\right)$ and every vertex $v$ incident
to $e$ we can choose $g_{e,v}\in\G\left(v\right)$ such that $\G\left(f_{e,v}\right)\circ g_{e,v}$
maps $\mathcal{S}\left(e\right)$ into $\mathcal{S}\left(v\right)$.
This naturally induces a map between the fusion systems $\FG\left(e\right)$
and $\FG\left(v\right)$ which we denote by $\FG\left(f_{e,v}\right)$.
\begin{defn}
We call \textbf{tree of group fusion system arising from $\left(\T,\G\right)$}
(or simply \textbf{tree of group fusion systems}) any pair $\left(\T,\FG\right)$
with $\FG$ a functor, defined as above, mapping $\T$ to the category
of saturated $p$-fusion systems.
\end{defn}

Notice that $\left(\T,\FG\right)$ depends on our choices both of
$\mathcal{S}\left(X\right)$ and $g_{e,v}$. In particular, it is
not necessarily unique. However, under some mild conditions, we can
obtain a uniqueness result.
\begin{defn}
\label{def:p-local-tree-of-groups}We say that a tree of groups $\left(\T,\G\right)$
is \textbf{$p$-local }if there exists $v^{*}\in V\left(\T\right)$
such that, for every $v\in\T\backslash\left\{ v^{*}\right\} $, then
$p\not|\frac{\left|\G\left(v\right)\right|}{\left|\G\left(e\right)\right|}$
where $e$ is the unique edge with extreme $v$ that belongs to the
unique minimal path from $v$ to $v^{*}$. Equivalently $\G\left(f_{e,v}\right)$
maps Sylow $p$-subgroups to Sylow $p$-subgroups.
\end{defn}

\begin{example}
Using the notation of Example \ref{exa:tree-of-groups}(\ref{enu:exa-tree-linking-system}),
we can take $\T$ to be the tree of height $1$ with root $S$ and
leaves all $\F$-centric and fully $\F$-normalized subgroups of $S$.
Then the resulting tree of groups is $p$-local with $v^{*}=S$.

On the other hand, the tree of groups described in Example \ref{exa:tree-of-groups}(\ref{enu:exa-tree-auto})
is not $p$-local unless $p\not|\left|\Aut_{\F}\left(P\right)\right|$
for every $P\lneq S$ (in particular $S$ is abelian).

Finally, with notation as in Example \ref{exa:tree-of-groups}(\ref{enu:exa-tree-amalgam}),
we can see $B$, $G_{\boldsymbol{1}}$ and $G_{\boldsymbol{2}}$ as
subgroups of $\G_{\T}$. Then the tree of groups in this example is
$p$-local if and only if $B$ contains a Sylow $p$-subgroup of either
$G_{\boldsymbol{1}}$ or $G_{\boldsymbol{2}}$.
\end{example}

Let $\left(\T,\G\right)$ be a $p$-local tree of groups, let $\left(\T,\FG\right)$
be a tree of group fusion systems arising from $\left(\T,\G\right)$
and let $S=S\left(v^{*}\right)$. For every vertex $v$ of $\T$,
there exists an obvious monomorphism of $\mathcal{S}\left(v\right)$
into $S$ arising from the unique path from $v$ to $v^{*}$ and the
choices of $g_{e,v}$. This monomorphism allows us to view every $\FG\left(v\right)$
as a fusion system over a subgroup of $S$.
\begin{prop}
\label{prop:colimit-fs-exists}With notation as above, the fusion
system $\FG_{\T}:=\FSG[\G_{\T}]$ is the smallest fusion system over
$S$ containing the fusion systems $\FG\left(v\right)$ for every
vertex $v$ of $\T$. In particular $\FG_{\T}$ does not depend on
the choice of $\left(\T,\FG\right)$.
\end{prop}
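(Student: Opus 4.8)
The plan is to prove the displayed equality by establishing the two inclusions between $\FSG[\G_{\T}]$ and the join $\left\langle \FG\left(v\right)\,|\,v\in V\left(\T\right)\right\rangle _{S}$, after which the independence statement is essentially free. Throughout I would work with the Bass--Serre tree $\tilde{\T}$ associated to $\left(\T,\G\right)$, on which $\G_{\T}=\colim_{\T}\G$ acts with vertex stabilizers the conjugates of the $\G\left(v\right)$ and edge stabilizers the conjugates of the $\G\left(e\right)$; recall that for a tree of groups all the structure maps, and hence all vertex and edge groups, embed into $\G_{\T}$. Fixing the base vertex $\tilde{v^{*}}$ with stabilizer $\G\left(v^{*}\right)$, the inclusion $S=\mathcal{S}\left(v^{*}\right)\le\G\left(v^{*}\right)\le\G_{\T}$ is the embedding over which the fusion system is taken, and for each $v$ the chosen path from $v$ to $v^{*}$ together with the elements $g_{e,v}$ realizes the compatible monomorphism $\mathcal{S}\left(v\right)\hookrightarrow S$.

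The inclusion $\left\langle \FG\left(v\right)\right\rangle _{S}\subseteq\FSG[\G_{\T}]$ is the easy one: since each $\G\left(v\right)$ embeds in $\G_{\T}$ compatibly with $\mathcal{S}\left(v\right)\hookrightarrow S$, every generating morphism of $\FG\left(v\right)=\FAB{\mathcal{S}\left(v\right)}{\G\left(v\right)}$, being conjugation by an element of $\G\left(v\right)$, is also conjugation by the corresponding element of $\G_{\T}$ between subgroups of $S$, hence a morphism of $\FSG[\G_{\T}]$. As $\FSG[\G_{\T}]$ is a fusion system containing all the $\FG\left(v\right)$, it contains their join.

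For the reverse inclusion I would take $g\in\G_{\T}$ with $P\le S$ and $c_{g}$ mapping $P$ into $S$, and show $c_{g}|_{P}\in\left\langle \FG\left(v\right)\right\rangle _{S}$. The crucial geometric input is that $P$ fixes the vertex $\tilde{v^{*}}$ (as $P\le S\le\G\left(v^{*}\right)$) and, because $c_{g}\left(P\right)\le S\le\G\left(v^{*}\right)$, it also fixes $g^{-1}\tilde{v^{*}}$; being a finite $p$-group acting on a tree with non-empty fixed-point set, $P$ therefore fixes the entire geodesic $\tilde{v^{*}}=\tilde{u}_{0},\dots,\tilde{u}_{n}=g^{-1}\tilde{v^{*}}$ pointwise. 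Writing $\tilde{u}_{i}=g_{i}\tilde{v_{j_{i}}}$ with $g_{0}=1$ and $g_{n}=g^{-1}$ telescopes $g^{-1}$ into a product of transporter elements $g_{i-1}^{-1}g_{i}$, each moving a base vertex to an adjacent one and hence representable, up to edge-stabilizer elements, by an element of a single vertex group $\G\left(v_{j_{i}}\right)$. Conjugating $P$ step by step along this path, at each stage the intermediate image is a $p$-subgroup of the relevant vertex group; here $p$-locality (Definition \ref{def:p-local-tree-of-groups}) enters decisively, since the edge group pointing toward $v^{*}$ contains a Sylow $p$-subgroup of the outer vertex group, so by Sylow's theorem inside that finite group I can correct the image back into $\mathcal{S}\left(e\right)$ and push it toward $v^{*}$ into $S$, the correcting conjugations themselves being morphisms of the local systems $\FG\left(v_{j_{i}}\right)$ and $\FG\left(e\right)$; the correction at any interior vertex lying in the orbit of $v^{*}$ instead uses that $S=\mathcal{S}\left(v^{*}\right)$ is Sylow in $\G\left(v^{*}\right)$. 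Assembling these pieces exhibits $c_{g}|_{P}$ as a composite of morphisms from the $\FG\left(v\right)$. I expect the main obstacle to be precisely this bookkeeping: organizing the telescoping transporter factorization along the geodesic and verifying that after each Sylow correction the relevant source and target genuinely sit inside $S$, so that every intermediate arrow is a bona fide morphism of one of the $\FG\left(v\right)$ rather than merely a conjugation inside a conjugate of a vertex group.

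Finally, the independence assertion is immediate from the equality just proved. The group $\G_{\T}=\colim_{\T}\G$ and the distinguished Sylow $S=\mathcal{S}\left(v^{*}\right)$ depend only on $\left(\T,\G\right)$, so $\FSG[\G_{\T}]$ is manifestly independent of the auxiliary choices of $\mathcal{S}\left(v\right)$ for $v\ne v^{*}$ and of the coset representatives $g_{e,v}$, which enter only through the definition of the $\FG\left(v\right)$; since $\FG_{\T}=\FSG[\G_{\T}]$ coincides with the join $\left\langle \FG\left(v\right)\right\rangle _{S}$, that join cannot depend on those choices either.
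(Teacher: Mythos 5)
Your proposal is correct in substance but takes a genuinely different route from the paper, because the paper does not prove this proposition at all: its entire proof is a citation to \cite[Theorem A(b)]{Semeraro14}. What you have sketched is essentially the argument underlying that citation (which in turn extends Robinson's amalgam theorem, \cite[Theorem 1]{Robinson07}, from a single edge to a tree): the easy inclusion follows from the compatible embeddings of the vertex groups into $\G_{\T}$, and the hard inclusion uses the Bass--Serre tree --- a subgroup $P\le S$ together with its image $c_{g}\left(P\right)\le S$ fixes two vertices, hence the whole geodesic between them, and telescoping $g$ along that geodesic writes $c_{g}|_{P}$ as a composite of vertex-group conjugations, each pushed into Sylow position using precisely the $p$-locality hypothesis (the edge group pointing toward $v^{*}$ has index prime to $p$ in the outer vertex group), with the separate treatment you note at vertices in the orbit of $v^{*}$. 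What the paper's citation buys is completeness: the bookkeeping you yourself flag as ``the main obstacle'' --- formulating an induction on the length of the fixed geodesic so that after each Sylow correction the source and target of the next conjugation genuinely lie inside the embedded copies of the $\mathcal{S}\left(e\right)$ inside $S$, and so that the final arrow lands in $S$ --- is exactly the content of Semeraro's proof, and as written your argument asserts rather than verifies that this induction closes up. What your route buys is transparency and self-containedness: it makes visible why Definition \ref{def:p-local-tree-of-groups} is the right hypothesis (without it the embeddings $\mathcal{S}\left(v\right)\hookrightarrow S$, and hence the statement itself, are not even available) and why the generation really happens along the tree. Two cosmetic corrections: the fixed-point set of \emph{any} group acting without inversion on a tree is a subtree, so finiteness and $p$-primality of $P$ play no role in the geodesic-fixing step; and the paper's action on $\tilde{\T}$ in Definition \ref{def:orbit-graph} is a right action on right cosets, so your left-action formulas would need to be transposed to match. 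Your closing argument for the independence assertion is correct and is the same observation the paper intends by its ``in particular''.
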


\begin{proof}
See \cite[Theorem A(b)]{Semeraro14}.
\end{proof}
It is sometimes possible (see \cite[Theorem 4.2]{BLO06}) to determine
if the fusion system $\FG_{\T}$ is saturated by looking at the orbit
graph associated to $\left(\T,\G\right)$.
\begin{defn}
\label{def:orbit-graph}Let $\left(\T,\G\right)$ be a tree of groups
with completion $G=\G_{\T}$. The \textbf{orbit graph associated to
$\left(\T,\G\right)$} is the graph $\tilde{\T}$ (in fact a tree)
with vertex set the family of right cosets $V\left(\tilde{\T}\right)=\bigsqcup_{v\in V\left(\T\right)}\G\left(v\right)\backslash G$
and for every edge $e:=\left(v,w\right)\in\T$ and every $\G\left(e\right)g\in G$
a single edge connecting $\G\left(v\right)g$ and $\G\left(w\right)g$.
\end{defn}

\begin{lem}
\label{lem:T-as-hocolim}Let $\G\left(-\right)\backslash G:\T\to\Grph$
be the functor sending every $X\in\Ob\left(\T\right)$ to the graph
with no edges and and vertex set the right cosets $\G\left(X\right)\backslash G$.
Then, viewing graphs as $CW$-complexes in the natural way, we have
that $\tilde{\T}=\hocolim_{\T}\left(\G\left(-\right)\backslash G\right)$.
\end{lem}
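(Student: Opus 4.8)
The plan is to identify the homotopy colimit over the category $\T$ with the orbit graph $\tilde{\T}$ by comparing their two-stage cell structures. Recall that $\T$ has objects $V(\T) \sqcup E(\T)$ with the only non-identity morphisms being $f_{e,v} : e \to v$ for $v$ incident to $e$. Since the functor $\G(-)\backslash G$ lands in discrete graphs (no edges), each value is a $0$-dimensional $CW$-complex, so the homotopy colimit has cells only in dimensions $0$ and $1$: a $0$-cell for each object of the indexing category paired with a vertex of its value, and a $1$-cell for each non-identity morphism paired with a vertex of its source. Concretely, the $0$-cells of $\hocolim_\T(\G(-)\backslash G)$ are indexed by $\bigsqcup_{v \in V(\T)} \G(v)\backslash G$ together with $\bigsqcup_{e \in E(\T)} \G(e)\backslash G$, and the $1$-cells are indexed by pairs $(f_{e,v}, \G(e)g)$.

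First I would spell out the standard Bousfield--Kan (two-sided bar) model for the homotopy colimit of a functor valued in spaces over a small category, specialized to the fact that $\T$ has no nontrivial composable chains of non-identity morphisms (every non-identity morphism goes from an edge to a vertex, and no two such compose). This collapses the bar construction to its $0$- and $1$-simplices, giving a double mapping cylinder / graph whose vertices are the disjoint $0$-cells above and whose edges come from the mapping cylinders of the maps $\G(e)\backslash G \to \G(v)\backslash G$ induced by $f_{e,v}$. I would then observe that the cosets $\G(e)\backslash G$ sitting over an edge are contractible intervals' worth of identification and that, up to the canonical deformation retraction collapsing each mapping cylinder, the vertices $\G(e)\backslash G$ can be absorbed, leaving exactly the vertex set $\bigsqcup_{v \in V(\T)} \G(v)\backslash G = V(\tilde\T)$ of Definition \ref{def:orbit-graph}.

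The key step is to match the edges. For a fixed edge $e = (v,w)$ of $\T$ and a coset $\G(e)g$, the bar construction produces a $1$-cell joining the image of $\G(e)g$ in $\G(v)\backslash G$ to its image in $\G(w)\backslash G$; since the structure maps of $\G(-)\backslash G$ are the coset projections $\G(e)g \mapsto \G(v)g$ and $\G(e)g \mapsto \G(w)g$, this $1$-cell connects precisely $\G(v)g$ and $\G(w)g$. This is exactly the recipe in Definition \ref{def:orbit-graph}, where each edge $e = (v,w) \in \T$ and each $\G(e)g \in G$ contributes a single edge between $\G(v)g$ and $\G(w)g$. I would therefore define the evident cellular map from the reduced bar construction to $\tilde\T$ sending $0$-cells and $1$-cells by these formulas, and check it is a bijection on cells, hence a homeomorphism of $CW$-complexes (or at least a homotopy equivalence, which suffices since $\hocolim$ is only defined up to homotopy).

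The main obstacle I expect is bookkeeping the degenerate and low-dimensional part of the bar construction carefully enough to justify the collapse of the edge-cosets without introducing or losing components or loops; in particular I must verify that distinct pairs $(e, \G(e)g)$ give distinct edges of $\tilde\T$ and that no spurious identifications occur among the vertex cosets $\G(v)\backslash G$ for different $v$. This is a matter of confronting the explicit indexing of the Bousfield--Kan model against Definition \ref{def:orbit-graph} and noting that $G$ acts freely on the right of each coset space in a way compatible with both constructions. Once the cell-by-cell correspondence is established, the homeomorphism $\tilde\T \cong \hocolim_\T(\G(-)\backslash G)$ follows, and the parenthetical remark that $\tilde\T$ is a tree is recovered from the fact that $\hocolim$ over a tree $\T$ of contractible-on-components (indeed discrete) graphs glued along monomorphism-induced maps is simply connected.
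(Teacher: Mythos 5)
Your proposal is correct and takes essentially the same approach as the paper, whose entire proof is that the identification is immediate from the construction of the homotopy colimit: your cell-by-cell matching of the bar construction (which degenerates since $\T$ has no composable non-identity morphisms) against Definition \ref{def:orbit-graph} is precisely the content of that remark. The only caution is that the identification is a homeomorphism because the homotopy colimit is the edge-subdivision of $\tilde{\T}$ (each coset $\G\left(e\right)g$ becoming a midpoint joined by two $1$-cells to $\G\left(v\right)g$ and $\G\left(w\right)g$), not a ``deformation retraction'' absorbing the edge-cosets; and your closing claim that tree-ness of $\tilde{\T}$ follows formally from simple connectivity of a homotopy colimit over a tree is not a formality but Serre's theorem from Bass--Serre theory, which the lemma fortunately does not need.
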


\begin{proof}
This is immediate from construction of the homotopy colimit.
\end{proof}
The graph $\tilde{\T}$ admits an obvious right $G$-action (as do
the graphs $\G\left(X\right)\backslash G$). This allows us to define,
for every subgroup $P\le G$, the quotient graph $\tilde{\T}^{P}/C_{G}\left(P\right)$.
Here $\tilde{\T}^{P}$ denotes the subgraph of $\tilde{\T}$ of points
fixed under the action of $P$.

Assume that $\left(\T,\G\right)$ is $p$-local, let $S=S\left(v^{*}\right)$
and let $\left(\T,\FG\right)$ be an associated tree of group fusion
systems. We define \label{pag:quotient-graph}$\tilde{\T}^{-}/C_{G}\left(-\right):\mathcal{O}\left(\FG_{\T}\right)^{\text{op}}\to\Grph$
as the  functor that sends every $P\le S$ to the quotient graph $\tilde{\T}^{P}/C_{G}\left(P\right)$.
Several results (see for instance \cite[Theorem 4.2]{BLO06} and \cite[Theorem 3.2]{ClellandParker10}),
require these quotient graphs to be trees whenever $P$ is $\FG_{\T}$-centric
in order to prove that $\FG_{\T}$ is a saturated fusion system.
The multiple applications of such results to find new families of
exotic fusion systems (see \cite{ClellandParker10,ParkerStroth15})
make the graphs $\tilde{\T}^{P}/C_{G}\left(P\right)$ evidently relevant
for the study of saturated fusion systems. The following section is
therefore dedicated to the study of such graphs from the point of
view of trees of group fusion systems.

\subsection{The graphs $\tilde{\T}^{P}/C_{G}\left(P\right)$}

Let $\left(\T,\G\right)$ and $\left(\T,\FG\right)$ be as at the
end of Subsection \ref{subsec:trees-of-fusion-systems} and for every
$X\in\Ob\left(\T\right)$ view $\mathcal{S}\left(X\right)$ as a subgroup
of $S$ as in Proposition \ref{prop:colimit-fs-exists}. Define the
equivalence relation $\sim$ on the set $\Hom_{\F}\left(P,\mathcal{S}\left(X\right)\right)$
by setting $\alpha\sim\beta$ if and only if there exists an isomorphism
$\gamma\in\Hom_{\FG\left(X\right)}\left(\left(P\right)\alpha,\left(P\right)\beta\right)$
such that $\left(x\right)\left(\alpha\circ\gamma\right)=\left(x\right)\beta$
for every $x\in P$. This allows us to define the set 
\[
\Rep_{\F}\left(P,\FG\left(X\right)\right)=\Hom_{\F}\left(P,\mathcal{S}\left(X\right)\right)/\sim,
\]
which we view as a graph with no edges. Given $\alpha\in\Hom_{\F}\left(P,\mathcal{S}\left(X\right)\right)$
we denote by $\left[\alpha\right]_{\F\left(X\right)}$ its equivalence
class in $\Rep_{\F}\left(P,\FG\left(X\right)\right)$.
\begin{defn}
\label{def:RepF-as-hocolim}For every $P\le S$, define $\Rep_{\F}\left(P,\FG\left(-\right)\right)$
as the functor from $\T$ to $\Grph$ sending every $X\in\Ob\left(\T\right)$
to $\Rep_{\F}\left(P,\FG\left(X\right)\right)$. The image of $f_{e,v}$
sends the equivalence class of a morphism $\left[\alpha\right]_{\FG\left(e\right)}$
to $\left[\alpha\circ\iota_{\mathcal{S}\left(e\right)}^{\mathcal{S}\left(v\right)}\right]_{\FG\left(v\right)}$.
By viewing graphs as $CW$-complexes in the natural way, we define
$\Rep_{\F}\left(P,\FG\right)=\hocolim_{\T}\left(\Rep_{\F}\left(P,\FG\left(-\right)\right)\right)$.
The following provides an alternative description of $\Rep_{\F}\left(P,\FG\right)$.
\end{defn}

\begin{lem}
\label{lem:hocolim-as-graph}The $CW$-complex $\Rep_{\F}\left(P,\FG\right)$
can be viewed as a graph $R$ with 
\begin{align*}
V\left(R\right) & :=\bigsqcup_{v\in V\left(\T\right)}\Rep_{\F}\left(P,\FG\left(v\right)\right) & \text{and} &  & E\left(R\right) & :=\bigsqcup_{e\in E\left(\T\right)}\Rep_{\F}\left(P,\FG\left(e\right)\right),
\end{align*}
where, for every edge $e=\left(v,w\right)$ of $\T$, the edge $\left[\alpha\right]_{\FG\left(e\right)}$
connects the vertexes $\left[\alpha\circ\iota_{\mathcal{S}\left(e\right)}^{\mathcal{S}\left(v\right)}\right]_{\FG\left(v\right)}$
and $\left[\alpha\circ\iota_{\mathcal{S}\left(e\right)}^{\mathcal{S}\left(w\right)}\right]_{\FG\left(w\right)}$.
\end{lem}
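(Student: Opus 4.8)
The plan is to unwind the definition of the homotopy colimit over the category $\T$ and to exploit the fact that each $\Rep_{\F}(P,\FG(X))$ is a discrete graph (a set of points with no edges), so that the resulting $CW$-complex is automatically one-dimensional and hence a graph. This is the exact analogue of Lemma \ref{lem:T-as-hocolim}, and the computation proceeds along the same lines.

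First I would recall the bar-construction model of the homotopy colimit: $\hocolim_{\T}(\Rep_{\F}(P,\FG(-)))$ is the geometric realization of the simplicial space whose $n$-simplices are $\bigsqcup_{X_0\to\cdots\to X_n}\Rep_{\F}(P,\FG(X_0))\times\Delta^{n}$, the union being over all chains of morphisms of $\T$. The key structural feature of $\T$ is that its only non-identity morphisms are the maps $f_{e,v}\colon e\to v$, and that no two of these are composable; consequently the only non-degenerate simplices of the nerve of $\T$ are the objects $X\in\Ob(\T)$ in degree $0$ and the morphisms $f_{e,v}$ in degree $1$. The realization is therefore obtained by gluing, for every object $X$, a copy of $\Rep_{\F}(P,\FG(X))\times\Delta^{0}$, and for every incident pair $(e,v)$ a cylinder $\Rep_{\F}(P,\FG(e))\times\Delta^{1}$, with one end-face of each cylinder identified with the copy of $\Rep_{\F}(P,\FG(e))$ and the other glued to $\Rep_{\F}(P,\FG(v))$ along the map $\FG(f_{e,v})$ sending $[\alpha]_{\FG(e)}$ to $[\alpha\circ\iota_{\mathcal{S}(e)}^{\mathcal{S}(v)}]_{\FG(v)}$.

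Next I would read off the cell structure. Since every $\Rep_{\F}(P,\FG(X))$ is discrete, each cylinder $\Rep_{\F}(P,\FG(e))\times\Delta^{1}$ is a disjoint union of intervals, one per class $[\alpha]_{\FG(e)}$. For a fixed edge $e=(v,w)$ of $\T$, the two morphisms $f_{e,v}$ and $f_{e,w}$ contribute two intervals emanating from the point $[\alpha]_{\FG(e)}$, the first ending at $[\alpha\circ\iota_{\mathcal{S}(e)}^{\mathcal{S}(v)}]_{\FG(v)}$ and the second at $[\alpha\circ\iota_{\mathcal{S}(e)}^{\mathcal{S}(w)}]_{\FG(w)}$. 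These two intervals assemble into a single arc with $[\alpha]_{\FG(e)}$ as an interior (degree-two) point. Applying the evident homeomorphism that absorbs each such subdivision point then identifies the homotopy colimit with the graph $R$ whose vertices are $\bigsqcup_{v\in V(\T)}\Rep_{\F}(P,\FG(v))$ and whose edges are $\bigsqcup_{e\in E(\T)}\Rep_{\F}(P,\FG(e))$, the edge $[\alpha]_{\FG(e)}$ joining the two prescribed vertices.

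The only real subtlety, and the step I would treat most carefully, is the bookkeeping that turns the naive cell structure of the homotopy colimit into that of $R$: the discrete sets $\Rep_{\F}(P,\FG(e))$ arising from the edge-objects of $\T$ do not appear as vertices of $R$, but rather as the midpoints of its edges, so the identification passes through the canonical subdivision homeomorphism rather than being an isomorphism of $CW$-complexes on the nose. Here it is used that $\T$ is a tree (or at least loop-free), so that every edge is incident to exactly two distinct vertices and each class $[\alpha]_{\FG(e)}$ genuinely becomes an interior point of an arc; the argument is entirely local to each edge and mirrors the description of $\tilde{\T}$ given in Definition \ref{def:orbit-graph} together with Lemma \ref{lem:T-as-hocolim}.
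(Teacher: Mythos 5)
Your proof is correct, but it takes a genuinely different route from the paper: the paper proves this lemma by citation, referring to \cite[Lemma 2.8]{Semeraro14} (together with the comments on page 1060 of that reference), whereas you give a self-contained computation from the bar-construction model of $\hocolim_{\T}\left(\Rep_{\F}\left(P,\FG\left(-\right)\right)\right)$. Your argument is sound at every step: the nerve of $\T$ (as a category) is one-dimensional because the only non-identity morphisms $f_{e,v}$ go from edge-objects to vertex-objects and no two are composable; since every value $\Rep_{\F}\left(P,\FG\left(X\right)\right)$ is discrete, the realization consists of points and intervals only, with each morphism $f_{e,v}$ contributing the mapping cylinder of $\FG\left(f_{e,v}\right)$; and for an edge $e=\left(v,w\right)$ the two intervals attached at $\left[\alpha\right]_{\FG\left(e\right)}$ concatenate into an arc joining $\left[\alpha\circ\iota_{\mathcal{S}\left(e\right)}^{\mathcal{S}\left(v\right)}\right]_{\FG\left(v\right)}$ to $\left[\alpha\circ\iota_{\mathcal{S}\left(e\right)}^{\mathcal{S}\left(w\right)}\right]_{\FG\left(w\right)}$, these endpoints lying in disjoint summands because $v\neq w$. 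Your explicit treatment of the one subtlety --- that the homotopy colimit is, on the nose, the barycentric subdivision of $R$ (the classes $\left[\alpha\right]_{\FG\left(e\right)}$ appearing as midpoints rather than vertices), so the identification passes through the canonical subdivision homeomorphism --- is precisely the content that the paper outsources to Semeraro. What the citation buys the paper is brevity and consistency: the same reference supplies \cite[Proposition 3.5 and Corollary 3.7]{Semeraro14}, which are invoked in Lemma \ref{lem:isomorphism-quotient-graph}, so all identifications are made in a single compatible framework. What your direct proof buys is independence from the reference and transparency about where the hypotheses enter, in particular that the loop-freeness of $\T$ guarantees each edge-class becomes an interior point of an arc with two distinct endpoints.
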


\begin{proof}
This is precisely what is proven in \cite[Lemma 2.8]{Semeraro14}
(see also the comments in \cite[page 1060]{Semeraro14}).
\end{proof}
From now on and unless otherwise specified we make use of Lemma \ref{lem:hocolim-as-graph}
in order to view $\Rep_{\F}\left(P,\FG\right)$ as a graph.
\begin{defn}
We define $\Rep_{\F}\left(-,\FG\right):\OF^{\text{op}}\to\Grph$ as
the functor sending every $P\le S$ to the (potentially empty) graph
$\Rep_{\F}\left(P,\FG\right)$ and every $\left[\varphi\right]\in\Hom_{\OF}\left(Q,P\right)$
to the graph morphism sending the vertex $\left[\alpha\right]_{\FG\left(X\right)}$
of $\Rep_{\F}\left(P,\FG\right)$ to the vertex $\left[\alpha\circ\varphi\right]_{\FG\left(X\right)}$
of $\Rep_{\F}\left(Q,\FG\right)$.
\end{defn}

The introduction of the functor $\Rep_{\F}\left(-,\FG\right)$ provides
us with new insight regarding the orbit graph $\tilde{\T}$. More
precisely we have the following.
\begin{lem}
\label{lem:isomorphism-quotient-graph}Let $G=\G_{\T}$ be the completion
of $\left(\T,\G\right)$. The functor $\Rep_{\F}\left(-,\FG\right)$
is isomorphic to the functor $\tilde{\T}^{-}/C_{G}\left(-\right)$
defined in Page \pageref{pag:quotient-graph}. In particular the functor
$\Rep_{\F}\left(-,\FG\right)$ does not (up to isomorphism) depend
on the choice of $\left(\T,\FG\right)$.
\end{lem}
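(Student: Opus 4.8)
The plan is to exhibit an explicit natural isomorphism by comparing the two functors object-by-object as graphs and then checking compatibility with the morphisms of $\OF[\FG_\T]$. Both targets are already available combinatorially: $\tilde{\T}$ is the explicit graph of Definition \ref{def:orbit-graph}, and $\Rep_\F(P,\FG)$ is the explicit graph of Lemma \ref{lem:hocolim-as-graph}. First I would make $\tilde{\T}^P/C_G(P)$ explicit. Since $\tilde{\T}$ has vertex set $\bigsqcup_v \G(v)\backslash G$ and one edge $\G(e)g$ joining $\G(v)g$ and $\G(w)g$ for each $e=(v,w)$ and each coset $\G(e)g$, and since the right $G$-action permutes vertices and edges compatibly with the incidence maps, taking $P$-fixed points and then the $C_G(P)$-quotient can be carried out separately over each vertex and each edge of $\T$. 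This presents $\tilde{\T}^P/C_G(P)$ as the graph whose vertices over $v$ are the double cosets $\G(v)\backslash\{g\in G: gPg^{-1}\le\G(v)\}/C_G(P)$, whose edges over $e$ are $\G(e)\backslash\{g: gPg^{-1}\le\G(e)\}/C_G(P)$, and in which $\G(e)gC_G(P)$ joins $\G(v)gC_G(P)$ and $\G(w)gC_G(P)$.

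Next I would identify each $\Rep_\F(P,\FG(X))$ with an analogous double-coset set. Writing $\F=\FG_\T=\FSG$ (Proposition \ref{prop:colimit-fs-exists}), every morphism in $\Hom_\F(P,\mathcal{S}(X))$ is a conjugation $c_g$ with $g\in G$ and $g^{-1}Pg\le\mathcal{S}(X)$; one has $c_g=c_{g'}$ exactly when $g'\in C_G(P)g$, and the relation $\sim$ identifies $c_g$ with $c_{gh}$ for $h\in\G(X)$. Hence $\Rep_\F(P,\FG(X))=C_G(P)\backslash\{g: g^{-1}Pg\le\mathcal{S}(X)\}/\G(X)$. The key point is a Sylow argument: as $\mathcal{S}(X)\in\Syl_p(\G(X))$ and each $g^{-1}Pg$ is a $p$-subgroup of $\G(X)$, every $\G(X)$-coset meeting $\{g: g^{-1}Pg\le\G(X)\}$ already meets $\{g: g^{-1}Pg\le\mathcal{S}(X)\}$, so this double-coset set equals $C_G(P)\backslash\{g: g^{-1}Pg\le\G(X)\}/\G(X)$. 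Inversion $g\mapsto g^{-1}$ then gives a bijection onto $\G(X)\backslash\{g: gPg^{-1}\le\G(X)\}/C_G(P)$, i.e. onto the vertex (resp. edge) set of $\tilde{\T}^P/C_G(P)$ over $X$. Concretely, the map sends $\G(v)gC_G(P)$ to the class $[\alpha]_{\FG(v)}$ of an $\F$-morphism $\alpha\colon P\to\mathcal{S}(v)$ obtained by conjugating by $g^{-1}$ and then pushing into $\mathcal{S}(v)$ by a suitable element of $\G(v)$; I would check that this is independent of the Sylow adjustment (which is precisely what $\sim$ quotients out) and of the representative $g$ modulo $\G(v)$ on the left and $C_G(P)$ on the right, and that the evident inverse is well defined.

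With the vertex and edge bijections in hand over every $X\in\Ob(\T)$, I would verify that they respect incidence: the edge map $f_{e,v}$ acts on $\tilde{\T}$ by $\G(e)g\mapsto\G(v)g$ and on $\Rep_\F(P,\FG)$ by $[\alpha]_{\FG(e)}\mapsto[\alpha\circ\iota_{\mathcal{S}(e)}^{\mathcal{S}(v)}]_{\FG(v)}$, and under inversion both become the same translation of double cosets; hence the componentwise bijections assemble into a graph isomorphism $\tilde{\T}^P/C_G(P)\cong\Rep_\F(P,\FG)$. Finally I would check naturality in $P$: a morphism $[\varphi]=[c_k]\colon Q\to P$ of $\OF[\FG_\T]$ induces on the left the map coming from the inclusion $\tilde{\T}^P\subseteq\tilde{\T}^{k^{-1}Qk}$ followed by right translation by $k^{-1}$, and on the right the precomposition $[\alpha]\mapsto[\alpha\circ\varphi]$; under the coset bijection both are given by the same multiplication by $k$, so the naturality squares commute. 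The last sentence is then immediate, since $\tilde{\T}^-/C_G(-)$ depends only on the completion $G=\G_\T$ and not on the auxiliary choices of the $\mathcal{S}(X)$ and the $g_{e,v}$ defining $(\T,\FG)$.

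I expect the main obstacle to be bookkeeping rather than conceptual: keeping the conjugation and composition conventions consistent, so that $g\mapsto g^{-1}$ genuinely intertwines the two double-coset descriptions and so that precomposition on the $\Rep$ side matches the translation action on $\tilde{\T}$, together with the Sylow step that replaces $\mathcal{S}(X)$ by $\G(X)$ and thereby matches the stabilizer condition $gPg^{-1}\le\G(X)$ defining the fixed points of $\tilde{\T}$. Dovetailing the well-definedness of the vertex map against the relation $\sim$ with this Sylow adjustment is the point requiring the most care.
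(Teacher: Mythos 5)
Your proposal is correct, but it takes a genuinely different route from the paper. The paper's entire proof is a citation: since $\left(\T,\G\right)$ is $p$-local it satisfies what \cite{Semeraro14} calls Property (H), and the isomorphism of functors is then quoted directly from \cite[Proposition 3.5 and Corollary 3.7]{Semeraro14}. What you do is, in effect, reprove that cited result from scratch: you identify $\tilde{\T}^{P}/C_{G}\left(P\right)$ fibrewise over $\T$ with the double cosets $\G\left(v\right)\backslash\left\{ g\,:\,gPg^{-1}\le\G\left(v\right)\right\} /C_{G}\left(P\right)$; you identify $\Rep_{\F}\left(P,\FG\left(X\right)\right)$ with $C_{G}\left(P\right)\backslash\left\{ g\,:\,g^{-1}Pg\le\mathcal{S}\left(X\right)\right\} /\G\left(X\right)$, using that every morphism of $\F=\FSG[\G_{\T}]$ is a conjugation $c_{g}$, that $c_{g}=c_{g'}$ exactly when $g'\in C_{G}\left(P\right)g$, and that $\sim$ is right $\G\left(X\right)$-translation; and you bridge the two descriptions by Sylow conjugacy followed by inversion, then check incidence and naturality. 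This is sound, and your Sylow step is precisely where the standing hypothesis enters --- $\mathcal{S}\left(X\right)\in\Syl_{p}\left(\G\left(X\right)\right)$ together with the $p$-locality that the paper converts into Property (H). The trade-off is clear: your argument is self-contained and produces an explicit isomorphism (which makes later uses, such as Proposition \ref{prop:first-homology}, more transparent), while the paper's proof is two lines and delegates all the bookkeeping to the literature, where it is carried out along essentially the same double-coset lines. One bookkeeping point you should make explicit if you write this up: the copy of $\mathcal{S}\left(v\right)$ inside $S$ is the canonical Sylow subgroup of $\G\left(v\right)\le G$ conjugated by the product of the chosen elements $g_{e,v}$ along the path from $v$ to $v^{*}$, so your ``suitable element of $\G\left(v\right)$'' pushing $gPg^{-1}$ into $\mathcal{S}\left(v\right)$ is really conjugation by such a path element composed with an element of $\G\left(v\right)$, and the inversion bijection (and likewise the incidence check for the maps $\left[\alpha\right]_{\FG\left(e\right)}\mapsto\left[\alpha\circ\iota_{\mathcal{S}\left(e\right)}^{\mathcal{S}\left(v\right)}\right]_{\FG\left(v\right)}$) acquires a harmless translation by these elements. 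This is exactly the dovetailing you flag as the delicate part, and it does work out.
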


\begin{proof}
Since $\left(\T,\G\right)$ is $p$-local then it satisfies what in
\cite{Semeraro14} is denoted as Property (H). The result follows
from the natural isomorphims described in \cite[Proposition 3.5 and Corollary 3.7]{Semeraro14}.
\end{proof}
\medskip{}

Let $\R$ be a commutative ring. By viewing graphs as $CW$-complexes
in the natural way, we can compose the functor $\Rep_{\F}\left(-,\FG\right)$
with the $n$-th homology functor $H_{n}\left(-;\R\right)$ with coefficients
in $\R$. We denote such composition as $H_{n}\left(\Rep_{\F}\left(-,\FG\right);\R\right)$.
As an immediate corollary of Lemma \ref{lem:isomorphism-quotient-graph},
we obtain the following.
\begin{cor}
\label{cor:homology-above-2-is-0}The functor $H_{n}\left(\Rep_{\F}\left(-,\FG\right);\R\right)$
is the $0$ functor for every $n\ge2$ while, for $n=0$, it is the
constant contravariant functor $\underline{\R}$ sending every object
to the trivial $\R$-module $\R$.
\end{cor}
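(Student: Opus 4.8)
The plan is to split the statement into the two regimes $n\ge2$ and $n=0$ and to treat each by elementary means, invoking Lemma \ref{lem:isomorphism-quotient-graph} only for the degree-zero part. First I would dispose of the case $n\ge2$: since any graph, viewed as a $CW$-complex, is concentrated in dimensions $0$ and $1$, its cellular chain complex vanishes in all degrees $\ge2$. Hence $H_{n}\left(\Rep_{\F}\left(P,\FG\right);\R\right)=0$ for every $P\le S$ and every $n\ge2$, and this holds functorially because the cellular chain functor is natural in the graph. This requires no reference to the tree structure.

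For $n=0$ the content is that each graph $\Rep_{\F}\left(P,\FG\right)$ is connected and nonempty, so that $H_{0}\left(-;\R\right)$ is free of rank one on its set of connected components. By Lemma \ref{lem:isomorphism-quotient-graph} I may replace $\Rep_{\F}\left(P,\FG\right)$ by the quotient graph $\tilde{\T}^{P}/C_{G}\left(P\right)$, where $G=\G_{\T}$. Now $\tilde{\T}$ is a tree (Definition \ref{def:orbit-graph}), and I would invoke the standard Bass--Serre fact that the fixed-point subgraph $\tilde{\T}^{P}$ of the action of any subgroup $P\le G$ on a tree is itself a subtree, hence connected, whenever it is nonempty. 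Nonemptiness is immediate: writing $S=\mathcal{S}\left(v^{*}\right)$ and viewing $P\le S\le\G\left(v^{*}\right)$, the coset $\G\left(v^{*}\right)$ is a vertex of $\tilde{\T}$ fixed by the right action of every element of $P$, so $\G\left(v^{*}\right)\in\tilde{\T}^{P}$. Passing to the quotient by $C_{G}\left(P\right)$ preserves both connectedness (a continuous image of a connected space is connected) and nonemptiness, so $\tilde{\T}^{P}/C_{G}\left(P\right)$ is connected and nonempty and $H_{0}\cong\R$.

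It remains to identify the functor $H_{0}\left(\Rep_{\F}\left(-,\FG\right);\R\right)$ with the constant functor $\underline{\R}$. Given $\left[\varphi\right]\in\Hom_{\OF}\left(Q,P\right)$, the induced graph morphism $\Rep_{\F}\left(P,\FG\right)\to\Rep_{\F}\left(Q,\FG\right)$ carries vertices to vertices; since both graphs are connected and nonempty by the previous paragraph, the class of any single vertex generates $H_{0}$, and the induced map sends this generator to a generator, that is, it is the identity of $\R$. Hence every structure map of $H_{0}\left(\Rep_{\F}\left(-,\FG\right);\R\right)$ is the identity and the functor is $\underline{\R}$, as claimed.

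I expect the only non-formal step to be the connectedness of $\tilde{\T}^{P}$, which is precisely where the tree hypothesis on $\tilde{\T}$ (and thus the geometric input of Bass--Serre theory) is genuinely used; everything else is bookkeeping about one-dimensional $CW$-complexes and the naturality of homology. The isomorphism of Lemma \ref{lem:isomorphism-quotient-graph} is what makes this geometric input available, which is why the statement is phrased as a corollary of that lemma.
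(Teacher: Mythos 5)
Your proposal is correct and follows essentially the same route as the paper: vanishing for $n\ge2$ from the absence of higher-dimensional cells, and connectedness in degree $0$ via Lemma \ref{lem:isomorphism-quotient-graph} together with the Bass--Serre fact that the fixed-point set $\tilde{\T}^{P}$ of the action on the tree $\tilde{\T}$ is a subtree. Your explicit checks of nonemptiness (via $P\le\G\left(v^{*}\right)$) and of the functoriality of the identification with $\underline{\R}$ are details the paper leaves implicit, but they do not change the argument.
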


\begin{proof}
Let $P\le S$ and view $\Rep_{\F}\left(P,\FG\right)$ as a $CW$-complex.
Since $\Rep_{\F}\left(P,\FG\right)$ has no $n$-cells for every $n\ge2$,
then $H_{n}\left(\Rep_{\F}\left(-,\FG\right);\R\right)=0$ for every
such $n$.

On the other hand we have that $H_{0}\left(\Rep_{\F}\left(P,\FG\right);\R\right)=\R^{k}$,
where $k$ is the number of connected components of $\Rep_{\F}\left(P,\FG\right)$.

We know from \cite[Theorem I.9]{Serre80} (where $\tilde{\T}$ is
denoted by $X$) that $\tilde{\T}$ is a tree on which $G$ acts without
inversion. It follows from \cite[Section I.6.1]{Serre80} that $\tilde{\T}^{P}$
is also a tree. In particular $\tilde{\T}^{P}$ (and therefore $\tilde{\T}^{P}/C_{G}\left(P\right)$)
is connected. The result follows from Lemma \ref{lem:isomorphism-quotient-graph}.
\end{proof}
Corollary \ref{cor:homology-above-2-is-0} gives a description of
the functor $H_{n}\left(\Rep_{\F}\left(-,\FG\right);\R\right)$ for
every $n\not=1$. For this last case we introduce the following. 
\begin{defn}
\label{def:CGp}We denote the functor $H_{1}\left(\Rep_{\F}\left(-,\FG\right);\Fp\right)$
simply as $\CGp$.
\end{defn}

In the cases described at the end of Subsection \ref{subsec:trees-of-fusion-systems}
we have that $\tilde{\T}^{P}/C_{G}\left(P\right)$ is a tree and,
therefore, $\CGp\left(P\right)=0$. However, this functor is not the
$0$-functor and the following characterization can prove useful to
better understand it.
\begin{prop}
\label{prop:first-homology}If $P\in\Fc$, then $\CGp\left(P\right)\cong\Ab\left(C_{G}\left(P\right)/Z\left(P\right)\right)\otimes\Fp$
where $\Ab\left(-\right)$ denotes the abelianization functor (i.e.
the functor sending a group to it's quotient by the commutator subgroup).
\end{prop}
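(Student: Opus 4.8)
The plan is to combine the identification of $\CGp$ with a quotient graph (Lemma \ref{lem:isomorphism-quotient-graph}) with the Bass--Serre picture of a group acting on a tree, the decisive finite-group input being that the relevant vertex and edge stabilisers are $p'$-groups.

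First I would rewrite the object. By Lemma \ref{lem:isomorphism-quotient-graph} we have $\CGp\left(P\right)=H_{1}\left(\tilde{\T}^{P}/C_{G}\left(P\right);\Fp\right)$. Since $Z\left(P\right)\le P$ is centralised by $C_{G}\left(P\right)$, it is central in $C_{G}\left(P\right)$, and being contained in $P$ it fixes $\tilde{\T}^{P}$ pointwise; hence the $C_{G}\left(P\right)$-action on $\tilde{\T}^{P}$ factors through $\bar{\Gamma}:=C_{G}\left(P\right)/Z\left(P\right)$ and $\tilde{\T}^{P}/C_{G}\left(P\right)=\tilde{\T}^{P}/\bar{\Gamma}=:\mathbb{Y}$. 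Recall from the proof of Corollary \ref{cor:homology-above-2-is-0} that $\tilde{\T}^{P}$ is a tree, hence contractible, and that $G$ (and therefore $\bar{\Gamma}$) acts on it without inversion.

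Next I would pass to classifying spaces. As $\tilde{\T}^{P}$ is a contractible $\bar{\Gamma}$-$CW$-complex whose cells are permuted with orbit graph $\mathbb{Y}$, the Borel construction gives $B\bar{\Gamma}\simeq\left(\tilde{\T}^{P}\right)_{h\bar{\Gamma}}=\hocolim_{\mathbb{Y}}B\mathcal{G}$, where $\mathcal{G}$ assigns to each vertex (resp. edge) of $\mathbb{Y}$ the corresponding $\bar{\Gamma}$-stabiliser, exactly as graphs are realised as homotopy colimits in Lemma \ref{lem:T-as-hocolim}. The stabiliser of a vertex $\G(v)g$ (resp. edge) is $C_{V}\left(P\right)/Z\left(P\right)$ with $V=g^{-1}\G(v)g$ a conjugate of a vertex (resp. edge) group containing $P$. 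The crux is that each such stabiliser is a $p'$-group: since $P$ is $\F$-centric, $\F$-centricity is invariant under $\F$-conjugacy and descends to fusion subsystems, so after conjugating $P$ into $\mathcal{S}(v)$ one finds that $P$ is centric in the realisable subsystem $\F_{\mathcal{S}(v)}(\G(v))\subseteq\F$; the standard characterisation of centric subgroups in realisable systems (see \cite{AKO11}) then gives $Z\left(P\right)\in\Syl_{p}\left(C_{V}\left(P\right)\right)$, whence $C_{V}\left(P\right)/Z\left(P\right)$ has order prime to $p$. Each stabiliser being a finite $p'$-group is $\Fp$-acyclic, so collapsing $B\mathcal{G}$ to a point is a mod-$p$ homology equivalence and the Bousfield--Kan (isotropy) spectral sequence for $\hocolim_{\mathbb{Y}}B\mathcal{G}$ degenerates onto the row $t=0$, yielding $H_{*}\left(\bar{\Gamma};\Fp\right)\cong H_{*}\left(\mathbb{Y};\Fp\right)$. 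In degree one this reads $H_{1}\left(\mathbb{Y};\Fp\right)\cong H_{1}\left(\bar{\Gamma};\Fp\right)\cong\Ab\left(\bar{\Gamma}\right)\otimes\Fp$, the last step by universal coefficients (the integral $H_{1}$ of a group is its abelianisation and $H_{0}$ is free, killing the $\Tor$ term). Unwinding $\mathbb{Y}=\tilde{\T}^{P}/C_{G}\left(P\right)$ and $\bar{\Gamma}=C_{G}\left(P\right)/Z\left(P\right)$ then gives $\CGp\left(P\right)\cong\Ab\left(C_{G}\left(P\right)/Z\left(P\right)\right)\otimes\Fp$, as claimed.

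I expect the main obstacle to be the finite-group step, namely verifying that every vertex and edge stabiliser $C_{V}\left(P\right)/Z\left(P\right)$ is a $p'$-group. This requires care in conjugating $P$ by a suitable element of $G$ into the chosen Sylow $\mathcal{S}(v)$ of the vertex group so as to view it inside the realisable subsystem $\F_{\mathcal{S}(v)}(\G(v))$, checking that $\F$-centricity is preserved under this $G$-conjugation (both source and target lying in $S$, so that it is an $\F$-isomorphism) and that it passes to the subsystem, and only then applying the equivalence ``centric $\iff$ $Z\left(P\right)\in\Syl_{p}\left(C_{\G(v)}\left(P\right)\right)$''. Everything downstream is formal once these stabilisers are known to be $p'$-groups.
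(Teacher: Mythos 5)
Your proposal is correct, but it reaches the result by a genuinely different route from the paper's proof. The two arguments share the same skeleton --- reduce via Lemma \ref{lem:isomorphism-quotient-graph} to the action of $\overline{C_G}:=C_G\left(P\right)/Z\left(P\right)$ on the tree $\tilde{\T}^{P}$ --- and the same decisive finite-group input, namely that all cell stabilisers of this action are finite $p'$-groups because $P\in\Fc$; your conjugation-into-$\mathcal{S}\left(v\right)$ argument is exactly the justification behind the paper's terser claim that $Z\left(P\right)\in\Syl_{p}\left(C_{H}\left(P\right)\right)$ for every finite $P\le H\le G$. The difference lies in how that input becomes a homology isomorphism. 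The paper stays algebraic: it applies universal coefficients first, reducing to a comparison of abelianised fundamental groups, and then invokes two Bass--Serre exact sequences for the graph of groups $\overline{C_G}\backslash\backslash\tilde{\T}^{P}$ to produce an epimorphism $\Ab\left(\overline{C_G}\right)\twoheadrightarrow\Ab\left(\pi_{1}\left(\overline{C_G}\backslash\tilde{\T}^{P}\right)\right)$ whose kernel, being generated by images of the $p'$-stabilisers, dies after tensoring with $\Fp$. You instead use the topological avatar of the same theory: the Borel construction $B\overline{C_G}\simeq\left(\tilde{\T}^{P}\right)_{h\overline{C_G}}\simeq\hocolim_{\mathbb{Y}}B\mathcal{G}$, mod-$p$ acyclicity of classifying spaces of finite $p'$-groups, and collapse of the isotropy spectral sequence, giving $H_{*}\left(\overline{C_G};\Fp\right)\cong H_{*}\left(\mathbb{Y};\Fp\right)$ in all degrees before specialising to degree one. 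The paper's route needs only $\pi_{1}$-level facts from Serre and Bass and no spectral sequences; yours requires the homotopy-colimit decomposition of the Borel construction (valid here precisely because the action is without inversion, as you note, so $\tilde{\T}^{P}$ is a genuine $\overline{C_G}$-$CW$-complex), but is then purely formal, treats vertex and edge stabilisers uniformly, yields the (for a graph, vacuously) stronger all-degrees statement, and would generalise more readily to higher-dimensional $G$-$CW$-complexes, e.g.\ orbit graphs of larger trees of fusion systems as in the paper's concluding remarks.
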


\begin{proof}
Throughout this proof, for every subgroup $H\le G$, we write $C_{H}:=C_{H}\left(P\right)$
and, if $Z\left(P\right)\le H$, we also write $\overline{C_{H}}:=C_{H}/Z\left(P\right)$.

By defining $g\cdot x=x\cdot g^{-1}$ for every $x\in\tilde{\T}^{P}$
and every $g\in C_{G}$ we obtain a left action of $C_{G}$ on $\tilde{\T}^{P}$
such that the quotients $C_{G}\backslash\tilde{\T}^{P}$ and $\tilde{\T}^{P}/C_{G}$
of $\tilde{\T}^{P}$ via both the left and right action of $C_{G}$
coincide. For the reminder of the proof we view $C_{G}$ as acting
on the left on $\tilde{\T}^{P}$.

Since, by definition, $Z\left(P\right)$ acts trivially on $\tilde{\T}^{P}$,
then the action of $C_{G}$ on $\tilde{\T}^{P}$ induces an action
of $\overline{C_{G}}$ on $\tilde{\T}^{P}$ such that the quotients
$C_{G}\backslash\tilde{\T}^{P}$ and $\overline{C_{G}}\backslash\tilde{\T}^{P}$
coincide. We deduce from Lemma \ref{lem:isomorphism-quotient-graph}
that $\CGp\left(P\right)\cong H_{1}\left(\overline{C_{G}}\backslash\tilde{\T}^{P};\Fp,\right)$.
We know from Corollary \ref{cor:homology-above-2-is-0} that $H_{0}\left(\overline{C_{G}}\backslash\tilde{\T}^{P};\mathbb{Z}\right)\cong\mathbb{Z}$
is a free $\mathbb{Z}$-module. It follows that $\Tor\left(H_{0}\left(\overline{C_{G}}\backslash\tilde{\T}^{P};\mathbb{Z}\right),\Fp\right)=0$
(see \cite[Proposition 3A.5(3)]{Hatcher02}). We conclude from the
universal coefficients theorem (see \cite[Theorem 3A.3]{Hatcher02})
that $\CGp\left(P\right)\cong H_{1}\left(\overline{C_{G}}\backslash\tilde{\T}^{P};\mathbb{Z}\right)\otimes\Fp$.
Therefore, it suffices to prove that $\Ab\left(\pi_{1}\left(\overline{C_{G}}\backslash\tilde{\T}^{P}\right)\right)\otimes\Fp\cong\Ab\left(\overline{C_{G}}\right)\otimes\Fp$.

\medskip{}

Proceeding as in \cite[Definition 3.2]{Bass93}, we can take a graph
of groups $\overline{C_{G}}\backslash\backslash\tilde{\T}^{P}:=\left(\overline{C_{G}}\backslash\tilde{\T}^{P},\mathcal{A}\right)$
with the functor $\mathcal{A}$ satisfying that, for every $\overline{X}\in\Ob\left(\overline{C_{G}}\backslash\tilde{\T}^{P}\right)$,
there exists $X\in\Ob\left(\tilde{\T}^{P}\right)$ such that $X$
projects to $\overline{X}$ via the quotient map and $\mathcal{A}\left(\overline{X}\right)=C_{\overline{C_{G}}}\left(X\right)$.

From \cite[Example (3) page 13]{Bass93} and \cite[Proposition I.20]{Serre80}
(see also \cite[Proposition 1.20]{Bass93}), we know that there exists
a group $\pi_{1}\left(\overline{C_{G}}\backslash\backslash\tilde{\T}^{P}\right)$
that contains every $\mathcal{A}\left(\overline{v}\right)$ with $\overline{v}$
a vertex of $\overline{C_{G}}\backslash\tilde{\T}^{P}$ and that fits
in a short exact sequence of the form
\begin{equation}
1\to N\to\pi_{1}\left(\overline{C_{G}}\backslash\backslash\tilde{\T}^{P}\right)\to\pi_{1}\left(\overline{C_{G}}\backslash\tilde{\T}^{P}\right)\to1,\label{eq:exa-seq-for-homology}
\end{equation}
where $N$ is the normal subgroup generated by all the $\mathcal{A}\left(\overline{v}\right)$.
On the other hand, from \cite[Theorem 3.6(d)]{Bass93}, we also have
a short exact sequence of the form
\[
1\to\pi_{1}\left(\tilde{\T}^{P}\right)\to\pi_{1}\left(\overline{C_{G}}\backslash\backslash\tilde{\T}^{P}\right)\to\overline{C_{G}}\to1.
\]

Since $\tilde{\T}^{P}$ is a tree (see \cite[Section I.6.1]{Serre80}),
we deduce that $\pi_{1}\left(\overline{C_{G}}\backslash\backslash\tilde{\T}^{P}\right)\cong\overline{C_{G}}$.
Therefore, using the exact sequence of Equation (\ref{eq:exa-seq-for-homology}),
we obtain an epimorphism 
\begin{equation}
\Ab\left(\overline{C_{G}}\right)\twoheadrightarrow\Ab\left(\pi_{1}\left(\overline{C_{G}}\backslash\tilde{\T}^{P}\right)\right),\label{eq:epimosphism-abelianizations}
\end{equation}
 whose kernel is generated by the images in $\Ab\left(\overline{C_{G}}\right)$
of groups of the form $\mathcal{A}\left(\overline{v}\right)=C_{\overline{C_{G}}}\left(v\right)$
for some vertex $v$ of $\tilde{\T}^{P}$. Fix one such vertex $v$.
By definition of $\tilde{\T}$, there exists $Y\in\Ob\left(\T\right)$
and $y\in G$ such that $v=\G\left(Y\right)y$. It follows that, for
every $x\in C_{C_{G}}\left(v\right)\cup P$, we have $\G\left(Y\right)yx^{-1}=\G\left(Y\right)y$
and, therefore, $x\in\G\left(Y\right)^{y}$. We conclude that $P$
(and in particular $Z\left(P\right)$) is contained in $\G\left(Y\right)^{y}$
and that $\mathcal{A}\left(\overline{v}\right)\le\overline{C_{\G\left(Y\right)^{y}}}$.
Since $P\in\Fc$, then $Z\left(P\right)$ is a Sylow $p$-subgroup
of $C_{H}$ for any $P\le H\le G$ finite. It follows that $\overline{C_{\G\left(Y\right)^{y}}}$
(and therefore $\mathcal{A}\left(\overline{v}\right)$) is a finite
$p'$-group. We deduce that the subgroup of $\Ab\left(\overline{C_{G}}\right)$
obtained as image of $\mathcal{A}\left(\overline{v}\right)$ vanishes
after tensoring by $\Fp$. We conclude that the epimorphism of Equation
(\ref{eq:epimosphism-abelianizations}) induces an isomorphism from
$\Ab\left(\overline{C_{G}}\right)\otimes\Fp$ to $\Ab\left(\pi_{1}\left(\overline{C_{G}}\backslash\tilde{\T}^{P}\right)\right)\otimes\Fp$
thus concluding the proof.
\end{proof}
\begin{rem}
\label{rem:choice-of-notation}Proposition \ref{prop:first-homology}
is in fact the motivation of the chosen notation of $\CGp$ which
aims to evidence that first we focus on the $p'$-part of $C_{G}\left(P\right)$
by quotienting out by it's $p$-part (namely $Z\left(P\right)$) and
then we remove all elements of finite order $p'$ by tensoring with
$\Fp$.
\end{rem}

\begin{cor}
Let $P\in\Fc$ and let $v$ be a vertex of $\T$ such that $P\le\mathcal{S}\left(v\right)$.
If $C_{G}\left(P\right)$ is conjugate to a subgroup of $\G\left(v\right)$,
then $\CGp\left(P\right)=0$.
\end{cor}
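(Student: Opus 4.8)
The plan is to deduce the result immediately from Proposition~\ref{prop:first-homology}, which for $P\in\Fc$ already identifies $\CGp\left(P\right)$ with $\Ab\left(C_{G}\left(P\right)/Z\left(P\right)\right)\otimes\Fp$. So the entire task reduces to showing that this tensor product vanishes, and the cleanest route is to prove that $C_{G}\left(P\right)/Z\left(P\right)$ is a finite $p'$-group: its abelianization is then a finite abelian $p'$-group, on which multiplication by $p$ is invertible, so tensoring with $\Fp$ annihilates it.

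First I would record the single place where the hypothesis is used. Since $C_{G}\left(P\right)$ is conjugate to a subgroup of the finite group $\G\left(v\right)$, it is in particular isomorphic to a subgroup of a finite group, hence finite. This finiteness is the only role played by the assumption; everything afterwards is formal.

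Next I would argue that $Z\left(P\right)$ is a normal Sylow $p$-subgroup of $C_{G}\left(P\right)$. Normality (in fact centrality) is immediate: every element of $C_{G}\left(P\right)$ commutes with every element of $P$, and $Z\left(P\right)\le P$, so $Z\left(P\right)\le Z\left(C_{G}\left(P\right)\right)$. For the Sylow property I would invoke the observation already established inside the proof of Proposition~\ref{prop:first-homology}, namely that $P\in\Fc$ forces $Z\left(P\right)$ to be a Sylow $p$-subgroup of $C_{H}\left(P\right)$ for every finite $H$ with $P\le H\le G$. Applying this to the finite group $H:=P\cdot C_{G}\left(P\right)$ (a subgroup, since $C_{G}\left(P\right)$ normalizes $P$), for which $C_{H}\left(P\right)=C_{G}\left(P\right)$, gives exactly that $Z\left(P\right)$ is Sylow in $C_{G}\left(P\right)$.

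Finally, since $Z\left(P\right)$ is a normal Sylow $p$-subgroup, the index $\left[C_{G}\left(P\right):Z\left(P\right)\right]$ is prime to $p$, so $C_{G}\left(P\right)/Z\left(P\right)$ is a $p'$-group and the observation of the first paragraph yields $\CGp\left(P\right)=0$. I do not anticipate a genuine obstacle: the argument is short, and the only real subtlety is ensuring that $C_{G}\left(P\right)$ is finite, so that \emph{Sylow $p$-subgroup} is meaningful and Lagrange's theorem applies to the quotient. This is precisely what the conjugacy hypothesis secures.
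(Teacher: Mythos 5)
Your proof is correct, and it follows the same skeleton as the paper's: reduce via Proposition \ref{prop:first-homology} to showing $\Ab\left(C_{G}\left(P\right)/Z\left(P\right)\right)\otimes\Fp=0$, and get this by exhibiting $Z\left(P\right)$ as a central Sylow $p$-subgroup of a finite $C_{G}\left(P\right)$, so that the quotient is a finite $p'$-group killed by $-\otimes\Fp$. Where you genuinely diverge is in how the Sylow property is justified. The paper conjugates $C_{G}\left(P\right)$ into the finite vertex group $\G\left(v\right)$, notes $C_{G}\left(P\right)=C_{\G\left(v\right)}\left(P\right)$, and uses the fact that an $\F$-centric subgroup is $\FG\left(v\right)$-centric, hence $p$-centric in the finite group $\G\left(v\right)$. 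You instead extract from the hypothesis only the finiteness of $C_{G}\left(P\right)$, and then reuse the observation recorded inside the proof of Proposition \ref{prop:first-homology} (that $P\in\Fc$ forces $Z\left(P\right)\in\Syl_{p}\left(C_{H}\left(P\right)\right)$ for every finite $H$ with $P\le H\le G$), applied to $H=P\cdot C_{G}\left(P\right)$, for which indeed $C_{H}\left(P\right)=C_{G}\left(P\right)$. Your route buys two things: it isolates the single role of the conjugacy hypothesis, so the corollary holds verbatim under the weaker assumption that $C_{G}\left(P\right)$ is finite; and it sidesteps the paper's ``without loss of generality'' conjugation, which strictly speaking needs a word of care because replacing $P$ by a conjugate can move it out of $\mathcal{S}\left(v\right)$, and with it the literal meaning of ``$P$ is $\FG\left(v\right)$-centric''. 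What the paper's route buys in exchange is that it rests on the standard, well-documented characterization of centric subgroups of realizable fusion systems over finite groups, rather than on an auxiliary claim that is asserted (not proved) inside another proof; since Proposition \ref{prop:first-homology} already depends on that claim, your reliance on it is legitimate within the paper's own framework, but it would be cleaner to state it as a separate lemma if your version were adopted.
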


\begin{proof}
By taking conjugates of $P$ if necessary, we can assume without loss
of generality that $C_{G}\left(P\right)\le\G\left(v\right)$. In particular
$C_{G}\left(P\right)=C_{\G\left(v\right)}\left(P\right)$. Since $P$
is $\FG\left(v\right)$-centric (because it is $\F$-centric), then
$Z\left(P\right)\in\Syl_{p}\left(C_{G}\left(P\right)\right)$. It
follows that $\Ab\left(C_{G}\left(P\right)/Z\left(P\right)\right)$
is a $p'$-group. In particular $\Ab\left(C_{G}\left(P\right)/Z\left(P\right)\right)\otimes\Fp=0$.
The result follows from Proposition \ref{prop:first-homology}.
\end{proof}
We conclude this section with some observations regarding the fusion
Bredon cohomology of the orbit graph $\tilde{\T}$.

Let $K$ be a $G$-$CW$-complex, let $\R$ be a commutative ring
and let $\mathcal{C}$ be any family of subgroups of $S$. For every
$P\in\mathcal{C}$, define $C_{*}\left(K^{P}/C_{G}\left(P\right);\R\right)$
as the $\R$-module chain complex associated to the $CW$-complex
$K^{P}/C_{G}\left(P\right)$ (i.e. $C_{n}\left(K^{P}/C_{G}\left(P\right);\R\right)$
is the free $\R$-module with basis the $n$-cells of $K^{P}/C_{G}\left(P\right)$).
This construction leads naturally to the description of a chain complex
$C_{*}\left(K^{-}/C_{G}\left(-\right);\R\right)$ of $\R\OFC$-modules.
Let $P_{*,*}$ be a Cartan-Eilenberg projective resolution of $C_{*}\left(K^{-}/C_{G}\left(-\right);\R\right)$
(see \cite[Section 10.5]{Rot79} for a definition). Then, for every
$\R\OFC$-module $M$, we define the following.
\begin{defn}[{\cite[Definition 1.5]{Yal22}}]
\label{def:fusion-Bredon-cohomology}The \textbf{fusion Bredon cohomology}
of $X$ with coefficients in $M$ is defined as
\[
H_{\OFC}^{*}\left(K;M\right):=H^{*}\left(\Hom_{\R\OFC}\left(\Tot^{\oplus}\left(P_{*,*}\right);M\right)\right).
\]
\end{defn}

If $M$ comes from the contravariant part of a Mackey functor with
coefficients in $\Fp$, all the elements in $\mathcal{C}$ are $\F$-centric,
$\mathcal{C}$ is closed under $\F$-conjugation and taking overgroups,
and contains all the $\FG\left(X\right)$-centric-radical groups for
every $X\in\Ob\left(\T\right)$, then it follows from \cite[Proposition 10.5]{Yal22}
and sharpness (see \cite[Theorem B]{DiazPark15}) that $\limn_{\OFC[\FG\left(X\right)]}\left(M\downarrow_{\OFC[\FG\left(X\right)]}^{\OFC}\right)=0$
for every $n\ge1$. In situations such as these, the following simplification
of the Bredon cohomology is possible.
\begin{prop}
\label{prop:cohomology-iso-higher-limit}Assume that $\limn_{\OFC[\FG\left(X\right)]}\left(M\downarrow_{\OFC[\FG\left(X\right)]}^{\OFC}\right)=0$
for every $n\ge1$ and every $X\in\Ob\left(\T\right)$. Then viewing
the $G$-graph $\tilde{\T}$ as a $G$-$CW$-complex in the natural
way we have that
\[
H_{\OFC}^{n}\left(\tilde{\T};M\right)\cong\limn_{\T}\left(\lim_{\OFC[\FG\left(-\right)]}\left(M\downarrow_{\OFC[\FG\left(-\right)]}^{\OFC}\right)\right).
\]
\end{prop}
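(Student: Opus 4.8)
The plan is to compute the fusion Bredon cohomology directly from Definition \ref{def:fusion-Bredon-cohomology} by means of a hypercohomology spectral sequence, once the chain complex $C_*\left(\tilde{\T}^-/C_G\left(-\right);\R\right)$ has been made fully explicit. By Lemma \ref{lem:isomorphism-quotient-graph} this complex of $\R\OFC$-modules is isomorphic to the one built from $\Rep_\F\left(-,\FG\right)$, and by Definition \ref{def:RepF-as-hocolim} together with Lemma \ref{lem:hocolim-as-graph} the graph $\Rep_\F\left(P,\FG\right)$ has $0$-cells indexed by $\bigsqcup_{v\in V\left(\T\right)}\Rep_\F\left(P,\FG\left(v\right)\right)$ and $1$-cells indexed by $\bigsqcup_{e\in E\left(\T\right)}\Rep_\F\left(P,\FG\left(e\right)\right)$. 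Hence, writing $\R\left[-\right]$ for the free $\R$-module on a set, $C_*\left(\tilde{\T}^-/C_G\left(-\right);\R\right)$ is the complex of $\R\OFC$-modules concentrated in degrees $0$ and $1$ given by
\[
C_1=\bigoplus_{e\in E\left(\T\right)}\R\left[\Rep_\F\left(-,\FG\left(e\right)\right)\right]\xrightarrow{\ \partial\ }\bigoplus_{v\in V\left(\T\right)}\R\left[\Rep_\F\left(-,\FG\left(v\right)\right)\right]=C_0,
\]
with $\partial$ the difference of the two incidence maps of Lemma \ref{lem:hocolim-as-graph}; Corollary \ref{cor:homology-above-2-is-0} confirms that there are no cells in higher degrees.

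I would then pick a Cartan--Eilenberg projective resolution $P_{*,*}$ of this complex over $\R\OFC$ and filter $\Hom_{\R\OFC}\left(\Tot^{\oplus}\left(P_{*,*}\right),M\right)$ by the chain degree of $C_*$ (see \cite[Section 10.5]{Rot79}). Since $C_*$ has only the two terms $C_0,C_1$, the resulting hypercohomology spectral sequence
\[
E_1^{s,t}=\Ext_{\R\OFC}^t\left(C_s,M\right)\Longrightarrow H_{\OFC}^{s+t}\left(\tilde{\T};M\right)
\]
is supported on the two columns $s=0,1$ and converges. Because $\Ext$ carries the direct sums to products, $E_1^{0,t}=\prod_{v\in V\left(\T\right)}\Ext_{\R\OFC}^t\left(\R\left[\Rep_\F\left(-,\FG\left(v\right)\right)\right],M\right)$ and $E_1^{1,t}=\prod_{e\in E\left(\T\right)}\Ext_{\R\OFC}^t\left(\R\left[\Rep_\F\left(-,\FG\left(e\right)\right)\right],M\right)$, with $d_1$ induced by $\partial$.

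The crux of the argument, and the step I expect to be the main obstacle, is a Shapiro-type identification, natural in $X\in\Ob\left(\T\right)$,
\[
\Ext_{\R\OFC}^t\left(\R\left[\Rep_\F\left(-,\FG\left(X\right)\right)\right],M\right)\cong\limn[t]_{\OFC[\FG\left(X\right)]}\left(M\downarrow_{\OFC[\FG\left(X\right)]}^{\OFC}\right).
\]
I would prove this by exhibiting $\R\left[\Rep_\F\left(-,\FG\left(X\right)\right)\right]$ as the left Kan extension (induction) along the functor $\OFC[\FG\left(X\right)]\to\OFC$ of the constant module $\underline{\R}$, and then using that this induction is left adjoint to the restriction $-\downarrow_{\OFC[\FG\left(X\right)]}^{\OFC}$, together with $\limn[t]_{\OFC[\FG\left(X\right)]}=\Ext_{\R\OFC[\FG\left(X\right)]}^t\left(\underline{\R},-\right)$; this is precisely the comparison of higher limits over $\OFC$ and over $\OFC[\FG\left(X\right)]$ already invoked in the paragraph preceding the statement (cf.\ \cite{Yal22}). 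The delicate point is that $\OFC[\FG\left(X\right)]\to\OFC$ is faithful but not full, so the induction need not be exact; one must check that $\underline{\R}$ is acyclic for it (equivalently, the relevant higher $\Tor$ vanish), so that applying induction to a projective resolution of $\underline{\R}$ over $\R\OFC[\FG\left(X\right)]$ produces a genuine projective resolution of $\R\left[\Rep_\F\left(-,\FG\left(X\right)\right)\right]$ over $\R\OFC$. Granting this acyclicity, the adjunction converts $\Hom_{\R\OFC}\left(-,M\right)$ into $\Hom_{\R\OFC[\FG\left(X\right)]}\left(-,M\downarrow\right)$ and yields the displayed isomorphism.

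Finally I would feed in the hypothesis. Since $\limn[t]_{\OFC[\FG\left(X\right)]}\left(M\downarrow\right)=0$ for every $t\ge1$ and every $X$, the identification makes $E_1^{s,t}=0$ for all $t\ge1$, so the spectral sequence degenerates onto the row $t=0$ and gives $H_{\OFC}^n\left(\tilde{\T};M\right)\cong E_2^{n,0}$. The surviving two-term complex $\left(E_1^{\bullet,0},d_1\right)$ reads
\[
\prod_{v\in V\left(\T\right)}\lim_{\OFC[\FG\left(v\right)]}\left(M\downarrow\right)\xrightarrow{\ d_1\ }\prod_{e\in E\left(\T\right)}\lim_{\OFC[\FG\left(e\right)]}\left(M\downarrow\right),
\]
where $d_1$ is the difference of the two restriction maps dual to $\partial$. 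As $\T$ is a one-dimensional tree, this is exactly the (reduced) cochain complex computing the higher limits over $\T$ of the functor $X\mapsto\lim_{\OFC[\FG\left(X\right)]}\left(M\downarrow\right)$; consequently $E_2^{n,0}=\limn[n]_\T\left(\lim_{\OFC[\FG\left(-\right)]}\left(M\downarrow\right)\right)$, which is the asserted isomorphism. Note that this also recovers the expected vanishing $H_{\OFC}^n\left(\tilde{\T};M\right)=0$ for $n\ge2$.
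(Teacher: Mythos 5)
Your proof is correct, and it ultimately rests on the same two external pillars as the paper's argument, namely Symonds' identification of $\R\left[\Rep_{\F}\left(-,\FG\left(X\right)\right)\right]$ with the induction of the constant functor $\underline{\R}$ from $\OFC[\FG\left(X\right)]$ to $\OFC$ (\cite[page 266]{Symonds05}, \cite[Proposition 4.1]{Yal22}) and Shapiro's lemma for fusion orbit categories (\cite[Proposition 4.8]{Yal22}); but the packaging is genuinely different. The paper never unwinds Definition \ref{def:fusion-Bredon-cohomology}: it uses Lemma \ref{lem:T-as-hocolim} to write $\tilde{\T}=\hocolim_{\T}\left(\G\left(-\right)\backslash G\right)$, invokes the Bousfield--Kan spectral sequence $\limn[s]_{\T}\left(H_{\OFC}^{t}\left(\G\left(-\right)\backslash G;M\right)\right)\Rightarrow H_{\OFC}^{s+t}\left(\tilde{\T};M\right)$ from \cite[\S XII 4.5]{BousKan72}, and then computes each $H_{\OFC}^{t}\left(\G\left(X\right)\backslash G;M\right)$ by a second application of \cite[Proposition 5.7]{Yal22} to the discrete $G$-set $\G\left(X\right)\backslash G$ (where sharpness is automatic), landing on $\limn[t]_{\OFC[\FG\left(X\right)]}\left(M\downarrow_{\OFC[\FG\left(X\right)]}^{\OFC}\right)$; with that route the outer $E_{2}$-page is \emph{by construction} a higher limit over $\T$, so no further identification is needed. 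You instead build a single hypercohomology spectral sequence straight from the Cartan--Eilenberg definition, using the explicit two-term cell structure of the quotient graphs, and therefore must do one extra step that the paper gets for free: recognizing the surviving row $\prod_{v}\lim_{\OFC[\FG\left(v\right)]}\to\prod_{e}\lim_{\OFC[\FG\left(e\right)]}$ as the reduced cochain complex computing $\limn[n]_{\T}$. That step is correct since $\T$ is one-dimensional, and it is in fact the same resolution trick the paper deploys later (Lemma \ref{lem:vanishing-cohomology-higher-than-2}) for the two-vertex tree. What your route buys is economy of machinery — no Bousfield--Kan citation, and \cite[Proposition 5.7]{Yal22} replaced by the explicit induction/adjunction argument you sketch — at the cost of two obligations you leave partly open: the acyclicity of $\underline{\R}$ for induction needed to run Shapiro (which you correctly flag as the crux and defer, exactly as the paper defers it to \cite[Proposition 4.8]{Yal22}), and the naturality in $X$ of all the identifications, so that $d_{1}$ really becomes the difference of the restriction maps. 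The paper carries the mirror image of this naturality obligation (its pointwise identification of the $E_{2}$-page must be natural on $\T$) and also leaves it implicit, so the two proofs sit at the same level of rigor.
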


\begin{proof}
We know from Lemma \ref{lem:T-as-hocolim} that $\tilde{\T}=\hocolim_{\T}\left(\G\left(-\right)\backslash G\right)$.
Therefore, we obtain the following Bousfield-Kahn spectral sequence
(see \cite[\S XII 4.5]{BousKan72})
\begin{equation}
E_{2}^{s,t}:=\limn[s]_{\T}\left(H_{\OFC}^{t}\left(\G\left(-\right)\backslash G;M\right)\right)\Rightarrow H_{\OFC}^{s+t}\left(\tilde{\T};M\right).\label{eq:bousfield-kahn-spectral-seq}
\end{equation}

Let $X\in\mathcal{C}$ and, for every $P\in\mathcal{C}$, define $\Rep_{G}\left(P,\G\left(X\right)\right):=\left(\G\left(X\right)\backslash G\right)^{P}/C_{G}\left(P\right)$.
By applying \cite[Proposition 5.7]{Yal22} on the $G$-$CW$ complex
$\G\left(X\right)\backslash G$, we obtain the converging spectral
sequence
\begin{equation}
\lui{II}{E_{2}^{u,v}}:=\Ext_{\R\OFC}^{u}\left(H_{v}\left(\Rep_{G}\left(-,\G\left(X\right)\right);\R\right),M\right)\Rightarrow H_{\OFC}^{u+v}\left(\G\left(X\right)\backslash G;M\right).\label{eq:sharp-spectral-sequence}
\end{equation}
Since $\G\left(X\right)\backslash G$ is (as a $CW$-complex) just
a discrete set of points, then the same holds for $\Rep_{G}\left(P,\G\left(X\right)\right)$
for every $P\in\mathcal{C}$. It follows that $H_{v}\left(\Rep_{G}\left(-,\G\left(X\right)\right);\R\right)=0$
for every $v\ge1$. In other words the spectral sequence of Equation
(\ref{eq:sharp-spectral-sequence}) is sharp. We deduce that 
\[
\Ext_{\R\OFC}^{t}\left(H_{0}\left(\Rep_{G}\left(-,\G\left(X\right)\right);\R\right),M\right)\cong H_{\OFC}^{t}\left(\G\left(X\right)\backslash G;M\right).
\]
We know from \cite[page 266]{Symonds05} (see also \cite[Proposition 4.1]{Yal22}),
that the functor $H_{0}\left(\Rep_{G}\left(-,\G\left(X\right)\right);\R\right)$
is in fact the induction from $\OFC[\FG\left(X\right)]$ to $\OFC$
of the constant functor $\underline{\R}$ sending everything to the
trivial $\R$-module $\R$. It follows from Shapiro's isomorphism
(see \cite[Proposition 4.8]{Yal22} for a version on fusion systems),
that 
\[
H_{\OFC}^{t}\left(\G\left(X\right)\backslash G;M\right)\cong\limn[t]_{\OFC[\FG\left(X\right)]}\left(M\downarrow_{\OFC[\FG\left(X\right)]}^{\OFC}\right),
\]
where we are using the natural isomorphism $\Ext_{\R\OFC[\FG\left(X\right)]}^{t}\left(\underline{\R},M\right)\cong\limn[t]_{\OFC[\FG\left(X\right)]}\left(M\right)$.
We deduce from the assumptions that the spectral sequence of Equation
(\ref{eq:bousfield-kahn-spectral-seq}) is sharp. The result follows.
\end{proof}

\section{\protect\label{sec:An-exact-sequence-for=000020higher-limits}An
exact sequence for higher limits}

From now onward we focus our attention on the simplest non trivial
tree of fusion systems. To this end we introduce the following notation.
\begin{notation}
\label{nota:simple-tree-of-fusion-systems}$\phantom{.}$
\begin{enumerate}
\item Define $\T$ as the tree with only two vertexes $\left\{ \boldsymbol{1},\boldsymbol{2}\right\} $
and a single edge $\boldsymbol{e}$ between them. When viewing $\T$
as a category, for $i=1,2$, we denote by $f_{\boldsymbol{ei}}$ the
unique morphism between $\boldsymbol{e}$ and $\boldsymbol{i}$.
\item Let $G_{\boldsymbol{1}},G_{\boldsymbol{2}},G_{\boldsymbol{e}}$ be
finite groups with monomorphisms $\varphi_{\boldsymbol{i}}:G_{\boldsymbol{e}}\hookrightarrow G_{\boldsymbol{i}}$
such that $p\not|\left[G_{\boldsymbol{2}}:\varphi_{\boldsymbol{2}}\left(G_{\boldsymbol{e}}\right)\right]$.
In particular a Sylow $p$-subgroup of $G_{\boldsymbol{e}}$ maps
under $\boldsymbol{\varphi_{2}}$ to a Sylow $p$-subgroup of $G_{\boldsymbol{2}}$.
The same need not be true for $i=1$.
\item Define $\G:\T\to\Grp$ as the functor sending $x\in\Ob\left(\T\right)$
to $\G\left(x\right):=G_{x}$ and $f_{\boldsymbol{ei}}$ to $\G\left(f_{\boldsymbol{ei}}\right):=\varphi_{\boldsymbol{i}}:G_{\boldsymbol{e}}\hookrightarrow G_{\boldsymbol{i}}$.
\item Define $G=G_{\boldsymbol{1}}*_{G_{\boldsymbol{e}}}G_{\boldsymbol{2}}$
as the amalgam induced by the monomorphisms $\varphi_{\boldsymbol{i}}$.
In other words, $G$ is the colimit of $\G$ over $\T$.
\item View $G_{\boldsymbol{1}},G_{\boldsymbol{2}}$ and $G_{\boldsymbol{e}}$
as subgroups of $G$ in the natural way. In particular $G_{\boldsymbol{e}}\le G_{\boldsymbol{1}},G_{\boldsymbol{2}}$.
\item Let $S_{\boldsymbol{1}}:=S\in\Syl_{p}\left(G_{\boldsymbol{1}}\right)$
and $S_{\boldsymbol{2}}:=S_{\boldsymbol{e}}:=S'\in\Syl_{p}\left(G_{\boldsymbol{e}}\right)\subseteq\Syl_{p}\left(G_{\boldsymbol{2}}\right)$.
\item Define $\FG$ as the functor from $\T$ to the category of fusion
systems that sends every $x\in\Ob\left(\T\right)$ to $\FG\left(x\right):=\FAB{S_{x}}{G_{x}}$.
\item Define $\F_{x}:=\FG\left(x\right)$ for every $x\in\Ob\left(\T\right)$.
\end{enumerate}
\end{notation}

With this setup we can now prove the following particular case of
Proposition \ref{prop:cohomology-iso-higher-limit}.
\begin{lem}
\label{lem:vanishing-cohomology-higher-than-2}Let $\mathcal{C}$
be a family of subgroups of $S$, let $\R$ be a commutative ring
and let $M$ be an $\R\OFC$-module. Assume that $\limn_{\OFC[\F_{x}]}\left(M\downarrow_{\OFC[\F_{x}]}^{\OFC}\right)=0$
for every $n\ge1$ and every $x\in\Ob\left(\T\right)$. Then
\[
H_{\OFC}^{n}\left(\tilde{\T};M\right)\cong\begin{cases}
M^{\F} & \text{if }n=0\\
M^{\F_{\boldsymbol{e}}}/\left(\overline{\iota_{S'}^{S}}\left(M^{\F_{\boldsymbol{1}}}\right)+M^{\F_{\boldsymbol{2}}}\right) & \text{if }n=1\\
0 & \text{else}
\end{cases},
\]
where $M^{\mathcal{H}}:=\lim_{\OFC[\mathcal{H}]}M\downarrow_{\OFC[\mathcal{H}]}^{\OFC}$
is viewed as a subgroup of $M\left(S\right)$ or $M\left(S'\right)$
as appropriate and $\overline{\iota_{S'}^{S}}:M^{\F_{\boldsymbol{1}}}\to M^{\F_{\boldsymbol{e}}}$
is the universal map.
\end{lem}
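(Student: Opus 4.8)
The plan is to apply Proposition~\ref{prop:cohomology-iso-higher-limit} and then to compute the resulting higher limits over the elementary tree $\T$ by hand. The hypothesis of the present lemma is exactly the vanishing required by Proposition~\ref{prop:cohomology-iso-higher-limit}, so that result gives
\[
H_{\OFC}^{n}\left(\tilde{\T};M\right)\cong\limn[n]_{\T}\left(F\right),\qquad F\left(X\right):=\lim_{\OFC[\FG\left(X\right)]}\left(M\downarrow_{\OFC[\FG\left(X\right)]}^{\OFC}\right),
\]
so that $F\left(\boldsymbol{x}\right)=M^{\F_{\boldsymbol{x}}}$ for each $\boldsymbol{x}\in\Ob\left(\T\right)$. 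The whole computation thus reduces to understanding the functor $F$ and the higher limits of a three–object diagram.

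First I would pin down the structure maps of $F$, which is contravariant on $\T$ (it arises from applying the contravariant $H^{0}_{\OFC}\left(-;M\right)$ to the covariant diagram $\G\left(-\right)\backslash G$). The two generating morphisms $f_{\boldsymbol{e1}},f_{\boldsymbol{e2}}$ therefore produce maps into $F\left(\boldsymbol{e}\right)$, exhibiting $F$ as the cospan
\[
M^{\F_{\boldsymbol{1}}}\xrightarrow{\ \overline{\iota_{S'}^{S}}\ }M^{\F_{\boldsymbol{e}}}\xleftarrow{\ \subseteq\ }M^{\F_{\boldsymbol{2}}}.
\]
Here $f_{\boldsymbol{e1}}$ (the $p$-local edge, with $S'\le S$) induces the universal map $\overline{\iota_{S'}^{S}}$, namely restriction of an $\F_{\boldsymbol{1}}$-stable family along $S'\le S$; and $f_{\boldsymbol{e2}}$ induces the inclusion, since $S_{\boldsymbol{2}}=S_{\boldsymbol{e}}=S'$ and $\F_{\boldsymbol{e}}\subseteq\F_{\boldsymbol{2}}$ force every $\F_{\boldsymbol{2}}$-stable element of $M\left(S'\right)$ to be $\F_{\boldsymbol{e}}$-stable, whence $M^{\F_{\boldsymbol{2}}}\subseteq M^{\F_{\boldsymbol{e}}}$.

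Next I would compute $\limn[n]_{\T}\left(F\right)$ from the normalized cochain complex of $\T$. The only non-identity morphisms of $\T$ are $f_{\boldsymbol{e1}},f_{\boldsymbol{e2}}$ and they admit no non-trivial composites, so the nerve of $\T$ is the $1$-dimensional path $\boldsymbol{1}-\boldsymbol{e}-\boldsymbol{2}$; hence $\limn[n]_{\T}\left(F\right)=0$ for $n\ge2$ and the remaining limits are the cohomology of the two-term complex
\[
0\to M^{\F_{\boldsymbol{1}}}\oplus M^{\F_{\boldsymbol{2}}}\oplus M^{\F_{\boldsymbol{e}}}\xrightarrow{\ d\ }M^{\F_{\boldsymbol{e}}}\oplus M^{\F_{\boldsymbol{e}}}\to0,\qquad d\left(a,b,c\right)=\left(\overline{\iota_{S'}^{S}}\left(a\right)-c,\ b-c\right).
\]
The kernel is the pullback, $\limn[0]_{\T}\left(F\right)\cong M^{\F_{\boldsymbol{1}}}\times_{M^{\F_{\boldsymbol{e}}}}M^{\F_{\boldsymbol{2}}}$, and the difference map $\left(u,v\right)\mapsto u-v$ identifies $\coker d$ with $M^{\F_{\boldsymbol{e}}}/\left(\overline{\iota_{S'}^{S}}\left(M^{\F_{\boldsymbol{1}}}\right)+M^{\F_{\boldsymbol{2}}}\right)$ (using $M^{\F_{\boldsymbol{2}}}\subseteq M^{\F_{\boldsymbol{e}}}$ for the second summand); this is the asserted value at $n=1$, and $0$ for $n\ge2$.

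It remains to identify the $n=0$ pullback with $M^{\F}$. Viewing $M^{\F_{\boldsymbol{1}}}$ and $M^{\F_{\boldsymbol{2}}}$ as the $\F_{\boldsymbol{1}}$- and $\F_{\boldsymbol{2}}$-stable elements of $M\left(S\right)$ and $M\left(S'\right)$ and $\overline{\iota_{S'}^{S}}$ as restriction along $S'\le S$, the pullback is the set of $\F_{\boldsymbol{1}}$-stable $m\in M\left(S\right)$ whose restriction to $S'$ is $\F_{\boldsymbol{2}}$-stable. Since $\F=\FG_{\T}$ is generated by $\F_{\boldsymbol{1}}$ and $\F_{\boldsymbol{2}}$ by Proposition~\ref{prop:colimit-fs-exists}, stability under all of $\F$ is equivalent to simultaneous $\F_{\boldsymbol{1}}$- and $\F_{\boldsymbol{2}}$-stability, so this pullback is exactly $M^{\F}$. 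I expect the main obstacle to be bookkeeping rather than conceptual: fixing the variance of $F$ and verifying that its two structure maps really are $\overline{\iota_{S'}^{S}}$ and the inclusion, and justifying the generation step so that the $n=0$ pullback is genuinely the full space of $\F$-stable elements.
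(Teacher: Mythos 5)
Your overall route is the same as the paper's: invoke Proposition \ref{prop:cohomology-iso-higher-limit} and then compute $\limn[*]_{\T}$ of the cospan $M^{\F_{\boldsymbol{1}}}\to M^{\F_{\boldsymbol{e}}}\leftarrow M^{\F_{\boldsymbol{2}}}$ by hand; the paper does this last step with the two-term projective resolution $0\to P_{1}\to P_{0}\to\underline{\R}\to0$ of the constant functor on $\T$, you with the normalized nerve complex, and these are the same computation. Your identification of the differential, of $\limn[1]_{\T}$ with $M^{\F_{\boldsymbol{e}}}/\left(\overline{\iota_{S'}^{S}}\left(M^{\F_{\boldsymbol{1}}}\right)+M^{\F_{\boldsymbol{2}}}\right)$, and the vanishing in degrees $\ge2$ are all correct.

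The gap is in your $n=0$ step. You assert that, because $\F=\left\langle \F_{\boldsymbol{1}},\F_{\boldsymbol{2}}\right\rangle _{S}$ (Proposition \ref{prop:colimit-fs-exists}), an element of $M\left(S\right)$ which is $\F_{\boldsymbol{1}}$-stable and whose restriction to $S'$ is $\F_{\boldsymbol{2}}$-stable is automatically $\F$-stable. But $M$ is only an $\R\OFC$-module: it has no values on subgroups outside $\mathcal{C}$. A morphism $\varphi\in\Hom_{\F}\left(P,Q\right)$ with $P,Q\in\mathcal{C}$ is a composite of restrictions of $\F_{\boldsymbol{1}}$- and $\F_{\boldsymbol{2}}$-morphisms whose intermediate subgroups need not lie in $\mathcal{C}$, so the chain of compatibilities your argument needs cannot even be written down; generation alone does not yield the equivalence of stabilities, and the lemma is stated for an arbitrary family $\mathcal{C}$. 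Two ways to close the hole: (i) if $\mathcal{C}$ is closed under $\F$-conjugation (as in all the paper's applications, where $\mathcal{C}=\Fc$), first shrink each factor of the composite to an isomorphism onto its image; then every intermediate subgroup is $\F$-conjugate to $P$, hence lies in $\mathcal{C}$, the final inclusion is a morphism of $\F_{\boldsymbol{1}}$ between members of $\mathcal{C}$, and your argument goes through. (ii) Without any hypothesis on $\mathcal{C}$, bypass fusion theory entirely: the degree-zero corner of the spectral sequence of \cite[Proposition 5.7]{Yal22} (the one used in Proposition \ref{prop:long-exact-sequence}) together with Corollary \ref{cor:homology-above-2-is-0} gives $H_{\OFC}^{0}\left(\tilde{\T};M\right)\cong\Hom_{\R\OFC}\left(\underline{\R},M\right)\cong\lim_{\OFC}M=M^{\F}$ with no convergence subtleties, since $E_{2}^{s,t}=0$ whenever $s<0$ or $t<0$. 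To be fair, the paper's own proof is silent on this identification as well, but since your write-up makes the claim explicitly and justifies it by an argument that fails as stated, it needs one of these repairs.
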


\begin{proof}
Define $P_{0}$ and $P_{1}$ as the contravariant functors from $\T$
to $\R$-mod satisfying
\begin{align*}
P_{0}\left(\boldsymbol{e}\right) & =\R^{2}, & P_{0}\left(\boldsymbol{1}\right) & =P_{0}\left(\boldsymbol{2}\right)=\R,\\
P_{1}\left(\boldsymbol{e}\right) & =\R, & P_{1}\left(\boldsymbol{1}\right) & =P_{1}\left(\boldsymbol{2}\right)=0,
\end{align*}
and with $P_{0}\left(f_{\boldsymbol{e},\boldsymbol{i}}\right)$ the
natural inclusion on the $i$-th component.

Let $\underline{\R}$ be the constant contravariant functor from $\T$
to $\R\mod{}$ which sends everything to $\R$. Define the natural
transformations $\varepsilon:P_{0}\to\underline{\R}$ and $\varphi:P_{1}\to P_{0}$
by setting
\begin{align*}
\varepsilon_{\boldsymbol{i}}\left(x\right) & =x, & \varphi_{\boldsymbol{e}}\left(x\oplus y\right)=x+y & ,\\
\varepsilon_{\boldsymbol{e}}\left(x\right) & =x\oplus-x.
\end{align*}

Then $0\to P_{1}\stackrel{\varphi}{\to}P_{0}\stackrel{\varepsilon}{\to}\underline{\R}$
is a projective resolution of $\underline{\R}$. The result follows
from Proposition \ref{prop:cohomology-iso-higher-limit} by using
this projective resolution to compute the higher limits.
\end{proof}
We are now ready to proof the following.
\begin{prop}
\label{prop:long-exact-sequence}Let $\mathcal{C}$ be a family of
subgroups of $S$, let $\R$ be a commutative ring and let $M$ be
an $\R\OFC$-module. Assume that $\limn_{\OFC[\F_{x}]}\left(M\downarrow_{\OFC[\F_{x}]}^{\OFC}\right)=0$
for every $n\ge1$ and every $x\in\Ob\left(\T\right)$. Then the following
hold:
\begin{enumerate}
\item For every $n\ge1$ there exists an isomorphism
\[
\Ext_{\R\OFC}^{n}\left(\CGp,M\right)\cong\limn[n+2]_{\OFC}\left(M\right).
\]
\item There exists an exact sequence of the form
\[
0\to\underset{\OFC}{\limn[1]}\left(M\right)\to M^{\F_{\boldsymbol{e}}}/\left(\overline{\iota_{S'}^{S}}\left(M^{\F_{\boldsymbol{1}}}\right)+M^{\F_{\boldsymbol{2}}}\right)\to\underset{\R\OFC}{\Hom}\left(\CGp,M\right)\to\underset{\OFC}{\limn[2]}\left(M\right)\to0.
\]
\end{enumerate}
\end{prop}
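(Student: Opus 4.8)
The plan is to feed the $G$-$CW$-complex $\tilde{\T}$ directly into the spectral sequence of \cite[Proposition 5.7]{Yal22}, exactly as was done for the discrete complexes $\G\left(X\right)\backslash G$ in the proof of Proposition \ref{prop:cohomology-iso-higher-limit}, and then to extract both statements from a two-row collapse. Applying that result to $K=\tilde{\T}$ produces a convergent cohomological spectral sequence
\[
E_2^{u,v}=\Ext_{\R\OFC}^{u}\left(H_{v}\left(\tilde{\T}^{-}/C_G\left(-\right);\R\right),M\right)\Rightarrow H_{\OFC}^{u+v}\left(\tilde{\T};M\right).
\]
By Lemma \ref{lem:isomorphism-quotient-graph} the coefficient functors are $H_v\left(\Rep_\F\left(-,\FG\right);\R\right)$, so Corollary \ref{cor:homology-above-2-is-0} tells us they vanish for $v\ge2$, equal the constant functor $\underline{\R}$ for $v=0$, and equal $H_1\left(\Rep_\F\left(-,\FG\right);\R\right)$ for $v=1$; for $\R=\Fp$ the latter is the functor $\CGp$ of Definition \ref{def:CGp}, and I write $\CGp$ for it throughout. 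Using the identification $\Ext_{\R\OFC}^{u}(\underline{\R},M)\cong\limn[u]_{\OFC}(M)$ already invoked in Proposition \ref{prop:cohomology-iso-higher-limit}, the $E_2$-page is concentrated in the two rows $E_2^{u,0}=\limn[u]_{\OFC}(M)$ and $E_2^{u,1}=\Ext_{\R\OFC}^{u}(\CGp,M)$.

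Because only two adjacent rows survive, the single family of possibly non-zero differentials is $d_2\colon E_2^{u,1}\to E_2^{u+2,0}$, whence $E_3=E_\infty$; the two-step filtration of the abutment then splices the cokernels $E_\infty^{u,0}=\operatorname{coker}(d_2)$ and kernels $E_\infty^{u-1,1}=\ker(d_2)$ into a long exact sequence
\[
\cdots\to\limn[u]_{\OFC}\left(M\right)\to H_{\OFC}^{u}\left(\tilde{\T};M\right)\to\Ext_{\R\OFC}^{u-1}\left(\CGp,M\right)\xrightarrow{d_2}\limn[u+1]_{\OFC}\left(M\right)\to\cdots.
\]
The abutment is supplied by Lemma \ref{lem:vanishing-cohomology-higher-than-2}, whose hypothesis is exactly the standing assumption: it equals $M^\F$ in degree $0$, the quotient $M^{\F_{\boldsymbol{e}}}/\left(\overline{\iota_{S'}^{S}}\left(M^{\F_{\boldsymbol{1}}}\right)+M^{\F_{\boldsymbol{2}}}\right)$ in degree $1$, and $0$ in every degree $\ge2$.

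For part (1) I would substitute $H_{\OFC}^{u}(\tilde{\T};M)=0$ for all $u\ge2$ into this sequence: for $u\ge3$ both neighbouring abutment terms $H_{\OFC}^{u-1}$ and $H_{\OFC}^{u}$ vanish, so $d_2\colon\Ext_{\R\OFC}^{u-2}(\CGp,M)\to\limn[u]_{\OFC}(M)$ is forced to be an isomorphism, which on reindexing $n=u-2$ is exactly $\Ext_{\R\OFC}^{n}(\CGp,M)\cong\limn[n+2]_{\OFC}(M)$ for $n\ge1$. For part (2) I would read off the bottom of the same sequence: since $\Ext_{\R\OFC}^{-1}(\CGp,M)=0$ and $H_{\OFC}^{2}(\tilde{\T};M)=0$, the degree-$1$ and degree-$2$ terms collapse to the four-term exact sequence
\[
0\to\limn[1]_{\OFC}\left(M\right)\to H_{\OFC}^{1}\left(\tilde{\T};M\right)\to\Hom_{\R\OFC}\left(\CGp,M\right)\to\limn[2]_{\OFC}\left(M\right)\to0,
\]
and then replace $H_{\OFC}^{1}(\tilde{\T};M)$ by its value from Lemma \ref{lem:vanishing-cohomology-higher-than-2}. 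Both previously established inputs (the spectral sequence and the abutment) do the heavy lifting, so the real work is only the bookkeeping of the two-row collapse; the point I would watch most carefully is that the edge homomorphisms of the long exact sequence are genuinely the ones making $E_\infty^{u,0}$ a subobject and $E_\infty^{u-1,1}$ a quotient of $H_{\OFC}^u(\tilde{\T};M)$, so that the explicit degree-$0$ and degree-$1$ identifications from Lemma \ref{lem:vanishing-cohomology-higher-than-2} are compatible with the spectral sequence filtration.
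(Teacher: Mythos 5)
Your proposal is correct and follows essentially the same route as the paper's own proof: both apply \cite[Proposition 5.7]{Yal22} to $\tilde{\T}$, use Lemma \ref{lem:isomorphism-quotient-graph} and Corollary \ref{cor:homology-above-2-is-0} to reduce the $E_2$-page to the two rows $\limn[u]_{\OFC}\left(M\right)$ and $\Ext_{\R\OFC}^{u}\left(\CGp,M\right)$, extract the resulting long exact sequence (the paper simply cites \cite[Exercise 5.2.2]{Weibel94} where you spell out the $d_2$ mechanics), and conclude via Lemma \ref{lem:vanishing-cohomology-higher-than-2}.
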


\begin{proof}
Let $\tilde{\mathcal{T}}$ be the orbit graph associated with $\left(\G,\T\right)$
(see Definition \ref{def:orbit-graph}). By viewing graphs as $CW$-complexes
in the natural way, we can apply \cite[Proposition 5.7]{Yal22} with
$X=\tilde{\T}$, in order to obtain the converging spectral sequence
\[
\lui{II}{E_{2}^{s,t}}:=\Ext_{\R\OFC}^{s}\left(H_{t}\left(\Rep_{\F}\left(-,\FG\right)\right),M\right)\Rightarrow H^{s+t}:=H_{\OFC}^{s+t}\left(\tilde{\T};M\right),
\]
here we are applying Lemma \ref{lem:isomorphism-quotient-graph} in
order to replace the functor $\tilde{\T}^{-}/C_{G}\left(-\right)$
with the functor $\Rep_{\F}\left(-,\FG\right)$ (or rather its restriction
to $\OFC$).

We know from Corollary \ref{cor:homology-above-2-is-0}, that $\lui{II}{E_{2}^{s,0}}\cong\Ext_{\Fp\OFc}^{s}\left(\underline{\Fp},M\right)\cong\limn[s]_{\OFc}\left(M\right)$
and that $\lui{II}{E_{2}^{s,t}}=0$ for every $t\ge2$. Using the
above spectral sequence, we deduce (see \cite[Exercise 5.2.2]{Weibel94})
that there exists a long exact sequence of the form
\begin{equation}
\cdots\to H^{n}\to\Ext_{\R\OFc}^{n-1}\left(\CGp,M\right)\to\limn[n+1]_{\OFc}\left(M\right)\to H^{n+1}\to\cdots.\label{eq:long-exact-sequence}
\end{equation}
 The result follows from Lemma \ref{lem:vanishing-cohomology-higher-than-2}.
\end{proof}
\begin{rem}
In the proof of Proposition \ref{prop:long-exact-sequence}, we have
used the fact that $\T$ is the tree with two vertices only in order
to prove that the fusion Bredon cohomology $H_{\OFC}^{n}\left(\tilde{\T};M\right)$
vanishes for $n\ge2$. In the more general setting where $\left(\T,\G\right)$
is any $p$-local tree of groups, we know that, if $\limn_{\OFC[\FG\left(X\right)]}\left(M\downarrow_{\OFC[\FG\left(X\right)]}^{\OFC}\right)=0$
for every $X\in\Ob\left(\T\right)$ and every $n\ge1$, this Bredon
cohomology can be seen as the higher limits $\limn_{\T}\left(\lim_{\OFC[\FG\left(-\right)]}\left(M\downarrow_{\OFC[\FG\left(-\right)]}^{\OFC}\right)\right)$
(see Proposition \ref{prop:cohomology-iso-higher-limit}). Thus, in
this more general setting, we still obtain the long exact sequence
of Equation (\ref{eq:long-exact-sequence}) but further simplifications
of $H^{n}$ are not covered in this work.

As an immediate consequence of Proposition \ref{prop:long-exact-sequence}
we can now provide a proof of Theorem \hyperref[thm:A]{A}.
\end{rem}

\begin{proof}[proof of Theorem A]
We know from \cite[Proposition 10.5]{Yal22} (see also \cite[Corollary 3.6]{BLO03})
that for every $\mathcal{H}\in\left\{ \F,\F_{\boldsymbol{1}},\F_{\boldsymbol{2}},\F_{\boldsymbol{e}}\right\} $
and every $n\ge0$
\[
\limn_{\OFc[\mathcal{H}]}\left(M^{*}\downarrow_{\OFc}^{\OF}\right)\cong\limn_{\OFC[\mathcal{H}]}\left(M^{*}\downarrow_{\OFC[\mathcal{H}]}^{\OF}\right).
\]
We conclude from \cite[Theorem B]{DiazPark15} that $\limn_{\OFc[\mathcal{E}]}\left(M^{*}\downarrow_{\OFc}^{\OF}\right)=0$
for every $n\ge1$ and every $\mathcal{E}\in\left\{ \F_{\boldsymbol{1}},\F_{\boldsymbol{2}},\F_{\boldsymbol{e}}\right\} $.
The result follows from Proposition \ref{prop:long-exact-sequence}.
\end{proof}
Theorem \hyperref[thm:A]{A} tells us that, in the cases where $\CGp\left(P\right)=0$
for every $P\in\mathcal{C}$ then $\limn[i]_{\OFc}\left(H^{j}\left(-;\Fp\right)\right)=0$
for every $i\ge2$ and every $j\ge0$. Thus, in order to prove Theorem
\hyperref[thm:B]{B} we just need to prove that the first higher limit
also vanishes. To this end we first recall the following well known
result.
\begin{lem}[{\cite[Exercise 5.2.1 (dual)]{Weibel94}}]
\label{lem:short-exact-sequence}Let $\boldsymbol{A}$ be an abelian
category and let $E_{2}^{s,t}\Rightarrow H^{s+t}$ be a cohomology
spectral sequence in $\boldsymbol{A}$. Assume that $E_{2}^{s,t}=0$
for every $s\not\in\left\{ 0,1\right\} $. For every $n\in\mathbb{Z}$,
there exists a short exact sequence of the form
\[
0\to E_{2}^{1,n-1}\to H^{n}\to E_{2}^{0,n}\to0.
\]
\end{lem}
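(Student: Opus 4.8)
The plan is to observe that a cohomology spectral sequence concentrated in the two columns $s=0$ and $s=1$ necessarily degenerates at the $E_2$-page, and then to read off the asserted short exact sequence directly from the two-step filtration that convergence imposes on $H^n$.

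First I would show that every higher differential vanishes. In the cohomological convention the differential on the $r$-th page has bidegree $(r,1-r)$, so $d_r\colon E_r^{s,t}\to E_r^{s+r,\,t-r+1}$, and for $r\ge 2$ the target sits in column $s+r\ge s+2$. The hypothesis gives $E_2^{s,t}=0$ whenever $s\notin\{0,1\}$, and since each later page is a subquotient of the previous one, an induction yields $E_r^{s,t}=0$ in those same columns for all $r\ge 2$. Consequently, for $r\ge 2$ every $d_r$ has either its source or its target zero: if the source lies in column $0$ or $1$ then the target lies in column $\ge 2$, and if the source lies in any other column it is already zero. Hence $d_r=0$ for all $r\ge 2$, and therefore $E_\infty^{s,t}\cong E_2^{s,t}$ for every $(s,t)$.

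Next I would invoke convergence. Writing $H^n$ with its decreasing filtration $H^n=F^0H^n\supseteq F^1H^n\supseteq F^2H^n\supseteq\cdots$ satisfying $F^sH^n/F^{s+1}H^n\cong E_\infty^{s,\,n-s}$, the vanishing of $E_\infty^{s,n-s}=E_2^{s,n-s}$ for $s\ge 2$ forces the filtration to collapse to $H^n=F^0H^n\supseteq F^1H^n\supseteq F^2H^n=0$. Its two graded pieces are $F^1H^n\cong E_2^{1,n-1}$ and $H^n/F^1H^n\cong E_2^{0,n}$, so the tautological sequence $0\to F^1H^n\to H^n\to H^n/F^1H^n\to 0$ is precisely $0\to E_2^{1,n-1}\to H^n\to E_2^{0,n}\to 0$, as claimed.

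There is no substantive obstacle in this argument; it is a standard degeneration. The only point demanding care, and the step I would treat as the main one, is the bookkeeping of conventions: one must fix the cohomological decreasing-filtration indexing so that $E_2^{1,n-1}$ appears as the bottom filtration piece (hence a subobject of $H^n$) and $E_2^{0,n}$ as the top quotient, matching the direction of the maps in the stated sequence. Provided this is the same convention used to extract the long exact sequence in Proposition~\ref{prop:long-exact-sequence}, the roles of subobject and quotient are determined and the identification is forced.
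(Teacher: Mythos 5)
Your proof is correct: the two-column vanishing forces all differentials $d_r$ ($r\ge 2$) to vanish, so $E_2 = E_\infty$, and convergence then collapses the filtration of $H^n$ to the two-step filtration $H^n = F^0H^n \supseteq F^1H^n \supseteq 0$ with $F^1H^n \cong E_2^{1,n-1}$ and $H^n/F^1H^n \cong E_2^{0,n}$. The paper offers no proof of its own --- it simply cites \cite[Exercise 5.2.1 (dual)]{Weibel94} --- and your degeneration-plus-filtration argument is exactly the standard solution to that exercise, so there is nothing to reconcile beyond the indexing conventions you already flagged.
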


As a consequence of Lemma \ref{lem:short-exact-sequence} we obtain
the following.
\begin{lem}
\label{lem:vanishing-first-limit}Let $\F$ be a saturated fusion
system. Assume that $\limn[i]_{\OFc}\left(H^{j}\left(-;\Fp\right)\right)=0$
for every $j\ge0$ and every $i\ge2$. Then $\limn[1]_{\OFc}\left(H^{j}\left(-;\Fp\right)\right)=0$
for every $j\ge0$.
\end{lem}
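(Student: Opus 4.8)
The plan is to run the hypothesis through the Bousfield--Kahn spectral sequence of Equation~(\ref{eq:cohomology-spectral-sequence}),
\[
E_2^{s,t} := \limn[s]_{\OFc}\left(H^t\left(-;\Fp\right)\right) \Rightarrow H^{s+t}\left(B\F;\Fp\right),
\]
which is available precisely because $\F$ is saturated. The hypothesis that $\limn[i]_{\OFc}(H^j(-;\Fp)) = 0$ for all $i \ge 2$ says exactly that $E_2^{s,t} = 0$ whenever $s \ge 2$; that is, the spectral sequence is concentrated in the two columns $s \in \{0,1\}$, which is the situation covered by Lemma~\ref{lem:short-exact-sequence}.

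First I would invoke Lemma~\ref{lem:short-exact-sequence} to produce, for each $n \ge 1$, a short exact sequence
\[
0 \to \limn[1]_{\OFc}\left(H^{n-1}\left(-;\Fp\right)\right) \to H^n\left(B\F;\Fp\right) \to \lim_{\OFc}\left(H^n\left(-;\Fp\right)\right) \to 0,
\]
obtained by identifying $E_2^{1,n-1} = \limn[1]_{\OFc}(H^{n-1}(-;\Fp))$ and $E_2^{0,n} = \lim_{\OFc}(H^n(-;\Fp))$. Next I would observe that the right-hand map is the edge homomorphism of the spectral sequence: since no nonzero differential enters or leaves the $s=0$ column (their sources and targets sit in columns that are either negative or $\ge 2$, all of which vanish), one has $E_\infty^{0,n} = E_2^{0,n}$, and the map $H^n(B\F;\Fp) \to E_2^{0,n}$ supplied by the filtration is the canonical stable elements map.

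The crux is then to apply the stable elements theorem (\cite[Theorem~5.8]{BLO03}), which asserts precisely that this edge homomorphism is an isomorphism. Granting this, exactness of the short exact sequence forces the kernel $\limn[1]_{\OFc}(H^{n-1}(-;\Fp))$ to vanish for every $n \ge 1$; writing $j = n-1$ then yields $\limn[1]_{\OFc}(H^j(-;\Fp)) = 0$ for every $j \ge 0$, as required (the case $j=0$ already following since the left-hand term is zero for $n=1$). The only genuinely nontrivial point, and hence the step I expect to need the most care, is the identification of the short-exact-sequence map with the stable elements isomorphism; once that is in place the argument is purely formal.
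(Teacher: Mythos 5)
Your argument follows the same skeleton as the paper's proof --- the Bousfield--Kan spectral sequence for $B\F$ (available by saturation, via the centric linking system) together with Lemma \ref{lem:short-exact-sequence}, which yields for every $n\ge1$ the two-column short exact sequence
\[
0\to\limn[1]_{\OFc}\left(H^{n-1}\left(-;\Fp\right)\right)\to H^{n}\left(B\F;\Fp\right)\to\lim_{\OFc}\left(H^{n}\left(-;\Fp\right)\right)\to0
\]
--- but the two proofs close the argument in genuinely different ways. You identify the surjection with the edge homomorphism of the spectral sequence, argue that this edge map is the canonical stable-elements map, and then invoke \cite[Theorem 5.8]{BLO03} to conclude that this \emph{particular} map is an isomorphism, forcing its kernel to vanish. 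The paper never identifies the map at all: it cites \cite[Lemma 5.3]{BLO03} only for the existence of an \emph{abstract} isomorphism $\lim_{\OFc}\left(H^{n}\left(-;\Fp\right)\right)\cong H^{n}\left(B\F;\Fp\right)$, observes that these are finitely generated $\Fp$-modules and hence finite, and concludes that a surjection between finite groups of the same order is automatically injective. The paper's counting trick thus sidesteps exactly the point you yourself flag as the crux, namely checking that the filtration quotient produced by Lemma \ref{lem:short-exact-sequence} coincides with the stable-elements isomorphism. That compatibility is true and standard (both maps are induced by the canonical maps $BP\to B\F$ coming from the homotopy colimit decomposition), but in your write-up it remains an unproved assertion, so you should either supply a reference making this naturality explicit or switch to the paper's finiteness argument, which needs nothing about the map beyond its surjectivity. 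In exchange, your route has the mild conceptual advantage of not depending on finite-dimensionality of the limit terms, which the paper's cardinality argument genuinely uses.
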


\begin{proof}
Since $\F$ is saturated then we can take the centric linking system
$\L$ of $\F$ and define $B\F:=\left|\L\right|_{p}^{\mathcircumflex}$.
From \cite[Proposition 2.2]{BLO03}, there exists a Bousfield-Kan
spectral sequence of the form
\[
\limn[i]_{\OFc}\left(H^{j}\left(-;\Fp\right)\right)\Rightarrow H^{i+j}\left(B\F;\Fp\right).
\]

From Lemma \ref{lem:short-exact-sequence}, we obtain, for every $n\ge1$
a short exact sequence of the form 
\[
0\to\limn[1]_{\OFc}\left(H^{n-1}\left(-;\Fp\right)\right)\to H^{n}\left(B\F;\Fp\right)\to\lim_{\OFc}\left(H^{n}\left(-;\Fp\right)\right)\to0.
\]

From \cite[Lemma 5.3]{BLO03}, we know that $\lim_{\OFc}\left(H^{n}\left(-;\Fp\right)\right)\cong H^{n}\left(B\F;\Fp\right)$
for every $n\ge0$. Since $\lim_{\OFc}\left(H^{n}\left(-;\Fp\right)\right)$
is a finitely generated $\Fp$-module, and therefore finite, we deduce
that the epimorphism of the above short exact sequence is, in fact,
an isomorphism. The result follows.
\end{proof}
Theorem \hyperref[thm:B]{B} is now an immediate consequence of Theorem
\hyperref[thm:A]{A} and Lemma \ref{lem:vanishing-first-limit}.

\section{\protect\label{sec:sharpness-results}Sharpness for some fusion systems
coming from amalgams}

This section is dedicated to applying Theorems \hyperref[thm:A]{A}
and \hyperref[thm:B]{B} to the study of three families of fusion
systems. Namely we use Theorem \hyperref[thm:B]{B} in order to prove
Theorem \hyperref[thm:C]{C} (see Propositions \ref{prop:shar-Clelland-Parker}
and \ref{prop:shar-Parker-Stroth}) thus showing that sharpness holds
for both the Clelland-Parker and the Parker-Stroth fusion systems.
Moreover we use Theorem \hyperref[thm:A]{A} to prove Theorem \hyperref[thm:D]{D}
(see Subsection \ref{subsec:Benson-Solomon}) thus relating higher
limits over the orbit category of Benson-Solomon fusion systems with
the signalizer functor of these fusion systems described in \cite{AschCher10}.

\subsection{\protect\label{subsec:Clelland-Parker}The Clelland-Parker fusion
systems}

Clelland and Parker describe in \cite{ClellandParker10} two infinite
families of fusion systems which can be constructed as follows.

Let $p$ be an odd prime, let $q$ be a $p$-power, let $2\le n\le p-1$
be a natural number and set $k:=\Fp[q]$. The requirement $n\ge2$
does not exclude any of the Clelland-Parker exotic fusion systems
(see \cite[Theorems 5.1 and 5.2 and Lemma 5.3]{ClellandParker10}).

Define $D:=k^{*}\times\text{GL}_{2}\left(k\right)$ and $A:=A\left(n,k\right)$
as the group of homogenous polynomials in two variables, degree $n$
and coefficients in $k$. The group $D$ acts on $A$ via $f\left(x,y\right)\cdot\left(\lambda,\begin{pmatrix}a & b\\
c & d
\end{pmatrix}\right)=\lambda f\left(ax+by,cx+dy\right)$. This allows us to define the semidirect product $P:=P\left(n,k\right)=D\ltimes A$
and take a Sylow $p$-subgroup $S:=UA$ where $U\cong\left\{ \begin{pmatrix}1 & 0\\
a & 1
\end{pmatrix}\,:\,a\in k\right\} $ is seen as a Sylow $p$-subgroup of $D$. Finally, for every $i=0,\dots,n$,
set $C_{i}:=\left\{ \sum_{j=0}^{n}\lambda_{k}x^{n-j}y^{j}\right\} $
and define the subgroups $R=UC_{0}$ and $Q=UC_{1}$. Equivalently
$R$ and $Q$ are defined as
\begin{align*}
R & =\left\{ \left(1,\begin{pmatrix}1 & 0\\
a & 1
\end{pmatrix},\lambda x^{n}+\mu x^{n-1}y\right)\,:\,\lambda,\mu,a\in k\right\} ,\\
Q & =\left\{ \left(1,\begin{pmatrix}1 & 0\\
a & 1
\end{pmatrix},\lambda x^{n}+\mu x^{n-1}y+\nu x^{n-2}y\right)\,:\,\lambda,\mu,\nu,a\in k\right\} .
\end{align*}
As stated in \cite[page 296]{ClellandParker10} the normalizers of
$R$ and $Q$ in $P$ satisfy
\begin{align}
N_{P}\left(R\right) & =\left\{ \left(\theta,\begin{pmatrix}a & 0\\
c & b
\end{pmatrix},\lambda x^{n}+\mu x^{n-1}y\right)\,:\,\lambda,\mu,c\in k\,\wedge\,\theta,a,b\in k^{*}\right\} ,\label{eq:NPX}\\
N_{P}\left(Q\right) & =\left\{ \left(\theta,\begin{pmatrix}a & 0\\
c & b
\end{pmatrix},\lambda x^{n}+\mu x^{n-1}y+\nu x^{n-2}y\right)\,:\,\lambda,\mu,\nu,c\in k\,\wedge\,\theta,a,b\in k^{*}\right\} .\nonumber 
\end{align}

It turns out (see \cite[Lemma 4.6]{ClellandParker10}) that there
exist subgroups $P_{R}\le G_{R}:=\text{GL}_{3}\left(k\right)$ and
$P_{Q}\le G_{Q}:=\text{GSp}_{4}\left(k\right)$ and, for $X\in\left\{ R,Q\right\} $,
monomorphisms $\psi_{X}:N_{P}\left(X\right)\to P_{X}$ that map $N_{P}\left(X\right)$
to a Borel subgroup of $G_{X}$ and map $X$ to a normal subgroup
of $P_{X}$. Using these monomorphisms, Clelland and Parker construct
the amalgams 
\[
F_{X}:=F\left(1,n,k,X\right)=P*_{N_{P}\left(X\right)}P_{X}.
\]
Define $S_{X}:=N_{S}\left(X\right)\in\Syl_{p}\left(P_{X}\right)$,
$L:=N_{O^{p'}\left(P_{X}\right)}\left(S_{X}\right)O^{p'}\left(P\right)$
and $L_{X}:=\left(L\cap P_{X}\right)O^{p'}\left(P_{X}\right)$ viewed
as subgroups of $F_{X}$. Then we can further define the subamalgam
of $F_{X}$
\[
F_{X}^{*}:=F\left(q-1,n,k,X\right):=L*_{L\cap L_{X}}L_{X}.
\]
By viewing $F_{X}^{*}$ and $N_{P}\left(X\right)$ as subgroups of
$F_{X}$, it turns out (see \cite[page 296 and Lemma 4.7]{ClellandParker10})
that $F_{X}=N_{P}\left(X\right)F_{X}^{*}$ and that $F_{X}/F_{X}^{*}\cong N_{P}\left(X\right)/\left(L\cap L_{X}\right)\cong k^{*}$.
It follows that, for every $r|\left(q-1\right)$, we can take $x\in N_{P}\left(X\right)$
such that its projection $\overline{x}$ on $N_{P}\left(X\right)/\left(L\cap L_{X}\right)$
has order $\text{ord}\left(\overline{x}\right)=\frac{q-1}{r}$. We
can therefore define $P_{r}=L\left\langle x\right\rangle $ and $P_{X,r}:=L_{X}\left\langle x\right\rangle $
and, by viewing them as subgroups of $F_{X}$, we obtain the subamalgam
of $F_{X}$
\begin{equation}
F\left(r,n,k,X\right):=P_{r}*_{P_{r}\cap P_{X,r}}P_{X,r},\label{eq:realization-clelland-parker}
\end{equation}
which is the unique subgroup of $F_{X}$ of index $r$ containing
$F_{X}^{*}$. Finally the Clelland-Parker fusion systems are defined
as
\[
\F\left(r,n,k,X\right):=\FAB{S}{F\left(r,n,k,X\right)}.
\]

The following are useful properties of these fusion systems.
\begin{lem}
\label{lem:Clelland-Parker-X-and-A-are-centric}Let $X\in\left\{ R,Q\right\} $
and let $\F=\F\left(r,n,k,X\right)$. Then following hold:
\begin{enumerate}
\item \label{enu:Clelland-Parker-A-is-F-centric.}$A$ is $\F$-centric.
\item \label{enu:Clelland-Parker-F-conjugate-iff-F1-conjugate}For every
$Y\cong_{\F}X$ there exists $a\in P$ such that $Y=X^{a}$.
\item \label{enu:Clelland-Parker-X-is-F-centric}$X$ is $\F$-centric.
\end{enumerate}
\end{lem}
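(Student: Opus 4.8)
The plan is to prove the three claims in the order (1), (2), (3), since the centricity in (3) will rest on the fusion-control statement (2).

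For (1), the key observation is that $A$ is abelian, so $Z(A)=A$, and $A\trianglelefteq P$ forces $A\trianglelefteq S$; hence $N_{S}(A)=S$, so $A$ is fully normalized, and, $\F$ being saturated, also fully centralized. For a fully centralized subgroup it suffices to verify $C_{S}(A)=Z(A)$ in order to conclude that $A$ is $\F$-centric: if some conjugate $A'$ had $C_{S}(A')\gneq Z(A')$ then $|C_{S}(A')|>|Z(A')|=|Z(A)|=|C_{S}(A)|$, contradicting full centralization. So I would just compute $C_{S}(A)$. Writing $S=UA$ with $A$ abelian, an element $ua$ ($u\in U$, $a\in A$) centralizes $A$ iff $u$ does; and a nontrivial $u_{a}\colon f(x,y)\mapsto f(x,ax+y)$ moves $y^{n}\in A$, since $(ax+y)^{n}\neq y^{n}$ for $a\neq0$ (the $x^{n}$-coefficient is $a^{n}\neq0$). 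Thus $C_{U}(A)=1$ and $C_{S}(A)=A=Z(A)$, giving (1).

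For (2) the strategy is to reduce $\F$-fusion of $X$ to $P$-fusion. Working inside the full amalgam $F_{X}=P*_{N_{P}(X)}P_{X}$, one has $\F(r,n,k,X)\subseteq\FAB{S}{F_{X}}$, so it is enough to show that every $\FAB{S}{F_{X}}$-conjugate of $X$ lying in $S$ is a $P$-conjugate of $X$. The crucial input is that $\psi_{X}$ sends $X$ to a normal subgroup of $P_{X}$; as $X$ is a $p$-group this makes $X$ a normal $p$-subgroup, hence $X$ is normal in the fusion subsystem $\F_{2}=\FAB{S_{X}}{P_{X}}$, so every morphism of $\F_{2}$ whose domain contains $X$ fixes $X$. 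I would then decompose an arbitrary $\F$-isomorphism $X\xrightarrow{\sim}Y$ by Alperin's fusion theorem into restrictions of $\F$-automorphisms of $S$ and of the fully normalized $\F$-essential subgroups. The $\Aut_{\F}(S)$-steps are realized by $N_{P}(S)\leq P$, because $S$ fixes a unique vertex of the Bass--Serre tree (it lies in no other vertex group, e.g. $S\not\leq P_{X}$ since $A\not\leq S_{X}$), whence $N_{F_{X}}(S)\leq P$. Moreover a conjugate of $X$ is nonabelian (as $[X,X]=\langle x^{n}\rangle\neq1$) and so never lies inside the abelian $A$; thus the essential subgroups relevant to the fusion of $X$ are its own conjugates, at which step the current image of $X$ has the same order as the essential subgroup and therefore equals it, so the step fixes the subgroup setwise. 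Hence $Y$ is a $P$-conjugate of $X$.

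The main obstacle is precisely this last point. One must know that the only $\F$-essential subgroups carrying a conjugate of $X$ are the conjugates of $X$ themselves (so that no strictly larger essential subgroup can move $X$ inside it), and one must rule out that a conjugate $X^{g_{1}}$ ($g_{1}\in P$) lying in $N_{P}(X)$ can be carried by an element of $P_{X}$ to a subgroup of $P$ that is \emph{not} a $P$-conjugate of $X$ (a genuine possibility, since the fixed tree $\tilde{\T}^{X}$ is strictly larger than the star of the $P_{X}$-vertex). Both statements are controlled by the explicit subgroup structure of Clelland and Parker and amount, in the language of Section~\ref{sec:Preliminaries.}, to the relevant quotient graph being a tree; this is where I would invoke their analysis rather than re-derive it.

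Finally, (3) follows from (2) by a centralizer count. By (2) the $\F$-conjugates of $X$ inside $S$ are exactly the $X^{a}$ with $a\in P$ and $X^{a}\leq S$, so $X$ is $\F$-centric iff $C_{S}(X^{a})=Z(X^{a})$ for all such $a$; since $C_{P}(X^{a})=C_{P}(X)^{a}$ and $Z(X^{a})=Z(X)^{a}$, this reduces to the single assertion $Z(X)\in\Syl_{p}(C_{P}(X))$. Indeed, granting it, $Z(X^{a})$ is a Sylow $p$-subgroup of $C_{P}(X^{a})$, and the $p$-group $C_{S}(X^{a})=C_{P}(X^{a})\cap S$ contains it, hence equals it. For $X=R$ a direct computation gives $Z(R)=\langle x^{n}\rangle$ and $C_{P}(R)=\langle x^{n}\rangle\times C_{D}(R)$ with $C_{D}(R)=\{(\alpha^{-n},\alpha I):\alpha\in k^{*}\}\cong k^{*}$ a $p'$-group: one intersects $C_{D}(U)$ with the stabilizers of $x^{n}$ and $x^{n-1}y$, which forces the lower-left entry of the matrix to vanish and $\lambda\alpha^{n}=1$. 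Thus $\langle x^{n}\rangle=Z(R)$ is a Sylow $p$-subgroup of $C_{P}(R)$. The computation for $X=Q$ is entirely analogous (again $Z(Q)=\langle x^{n}\rangle$, with the $p'$-part of $C_{P}(Q)$ a torus), completing (3).
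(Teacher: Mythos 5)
Parts (1) and (3) of your proposal are correct, but they take a more computational route than the paper: for (1) the paper simply cites \cite[Lemma 4.2(iv)]{ClellandParker10} ($A$ is the unique abelian subgroup of $S$ of maximal order, hence weakly $\F$-closed and $S$-centric, with no appeal to saturation), and for (3) your computation of $C_{P}(R)$ re-proves what the paper quotes as \cite[Lemma 4.4(ii)]{ClellandParker10}, namely that $X$ is $p$-centric in $P$. Your reduction of (3) to that fact by means of (2) is exactly the paper's reduction, so those two items are fine (modulo the case $X=Q$ being waved through as ``entirely analogous'').

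Item (2), however, is the heart of the lemma, and there your argument has a genuine gap which you yourself flag but do not close. The Alperin-fusion-theorem route stands or falls with the claim that every $\F$-essential subgroup containing an $\F$-conjugate of $X$ is equal to that conjugate (your Bass--Serre tree argument that $\Aut_{\F}\left(S\right)$ is induced by $N_{P}\left(S\right)$ is fine). You never establish this claim: you say you ``would invoke'' the analysis of Clelland and Parker, but you point to no citable statement, and determining the essential subgroups of $\F\left(r,n,k,X\right)$ is substantial work --- indeed, control-of-fusion statements such as (2) are typically ingredients in such a determination rather than consequences of it, so deferring (2) to it runs a real risk of circularity. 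The paper's proof needs none of this and is elementary: working in the amalgam $F\left(r,n,k,X\right)=P_{r}*_{P_{r}\cap P_{X,r}}P_{X,r}$, write $Y=X^{a_{0}b_{1}a_{1}\cdots a_{n}}$ with $a_{i}\in P_{r}$ and $b_{i}\in P_{X,r}$; by \cite[Lemma 1]{Robinson07} all the intermediate conjugates lie in the edge group $P_{r}\cap P_{X,r}\le N_{P}\left(X\right)$; normality of $X$ in $P_{X}$ allows one to cancel an initial $a_{0}\in N_{P}\left(X\right)$, so, taking $n$ minimal, one may assume $a_{0}\notin N_{P}\left(X\right)$; and the explicit description of $N_{P}\left(X\right)$ in Equation (\ref{eq:NPX}) shows that for every $a\in P$ either $a\in N_{P}\left(X\right)$ or $X^{a}\not\le N_{P}\left(X\right)$, which forces $n=0$ and $Y=X^{a_{0}}$ with $a_{0}\in P$. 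To repair your proof you would have to either extract a precise classification of the $\F$-essential subgroups from \cite{ClellandParker10}, or replace the Alperin step by an amalgam argument of this kind.
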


\begin{proof}
We know from \cite[Lemma 4.2(iv)]{ClellandParker10} that $A$ is
the unique abelian subgroup of $S$ of maximal order. In particular
it is both weakly $\F$-closed and $S$-centric. We conclude that
$A$ is $\F$-centric thus proving Item (\ref{enu:Clelland-Parker-A-is-F-centric.}).

Let $Y$ be an $\F$-conjugate of $X$. Since $\F$ is realized by
the amalgam in Equation (\ref{eq:realization-clelland-parker}), there
exist $n\ge0$ and elements $a_{0},\dots,a_{n}\in P_{r}$ and $b_{1},\dots,b_{n}\in P_{X,r}$
such that $Y=X^{a_{0}b_{1}a_{1}b_{2}\cdots a_{n}}$. Define $B_{0}:=X$
and, for every $i\ge1$, define $A_{i}:=B_{i-1}^{a_{i-1}}$ and $B_{i}:=A_{i}^{b_{i}}$.
In particular $Y=A_{n+1}$.

We know from \cite[Lemma 1]{Robinson07}, that $A_{i},B_{i}\le P_{r}\cap P_{X,r}\le N_{P}\left(X\right)$
for every $i=0,\dots,n$. Since $X$ is normal in $P_{X}$, if $a_{0}\in N_{P}\left(X\right)$
then $X=B_{1}$. Therefore, by taking $n$ minimal we can assume without
loss of generality that $a_{0}\not\in N_{P}\left(X\right)$. Using
the given description of $N_{P}\left(X\right)$ (see Equation (\ref{eq:NPX})),
it is easy to see that for every $a\in P$ either $a\in N_{P}\left(X\right)$
or $X^{a}\not\le N_{P}\left(X\right)$. We conclude that $A_{1}\not\le N_{P}\left(X\right)$
and, therefore, $n=0$ and $Y=A_{1}$. In particular $Y$ is conjugate
to $X$ via $a_{0}\in P$ thus proving Item (\ref{enu:Clelland-Parker-F-conjugate-iff-F1-conjugate}).

Since $X$ is $p$-centric in $P$ (see \cite[Lemma 4.4(ii)]{ClellandParker10})
then every $P$-conjugate of $X$ contained in $S$ is $S$-centric.
In particular Item (\ref{enu:Clelland-Parker-X-is-F-centric}) follows
from Item (\ref{enu:Clelland-Parker-F-conjugate-iff-F1-conjugate}).
\end{proof}
Let us recall the following well known result.
\begin{prop}
\label{prop:centric-radical-contains-normal}Let $S$ be a finite
$p$-group, let $\F$ be a saturated fusion system over $S$ and let
$Q\le S$ be normal in $\F$. Then any $\F$-centric-radical subgroup
of $S$ contains $Q$.
\end{prop}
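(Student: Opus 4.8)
The plan is to argue by contradiction, exploiting the fact that the notion of "$\F$-centric-radical" adopted here is the strong one, namely $\Aut_{\F}(P)=\Inn(P)$ (see Definition \ref{def:general-fusion-system-notions}(\ref{enu:F-centric-radical})), together with the normalizer-growing property of finite $p$-groups. Let $P$ be an $\F$-centric-radical subgroup of $S$ and suppose, for contradiction, that $Q\not\le P$. Since $Q$ is normal in $\F$ it is in particular normal in $S$, so $PQ$ is a subgroup of $S$; moreover $PQ\le S$ is a finite $p$-group and, by assumption, $P\lneq PQ$.

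First I would invoke the fact that a proper subgroup of a finite $p$-group is properly contained in its normalizer. Applying this to $P\lneq PQ$ yields an element $g\in N_{PQ}(P)\setminus P$. Conjugation by $g$ then restricts to an automorphism $c_{g}\in\Aut_{S}(P)$, and since $\Aut_{S}(P)\le\Aut_{\F}(P)$ by the axioms of a fusion system, we obtain $c_{g}\in\Aut_{\F}(P)$.

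Next I would use the radical hypothesis $\Aut_{\F}(P)=\Inn(P)$ to write $c_{g}=c_{p}$ for some $p\in P$. It follows that $p^{-1}g$ centralizes $P$, that is, $p^{-1}g\in C_{S}(P)$. At this point the centric hypothesis enters: since $P$ is $\F$-centric we have $C_{S}(P)=Z(P)\le P$, whence $p^{-1}g\in P$ and therefore $g\in P$, contradicting the choice of $g$. This contradiction forces $Q\le P$, as desired.

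Since each step is a one-line deduction, I do not expect a genuine obstacle; the only points requiring care are checking that $c_{g}$ really lies in $\Aut_{\F}(P)$ (which holds because $g\in PQ\le S$) and that the centricity equality $C_{S}(P)=Z(P)$ is applied to $P$ itself rather than merely to its $\F$-conjugates. I would also remark that the argument in fact uses only $Q\trianglelefteq S$, so the full strength of the hypothesis $Q\trianglelefteq\F$ is not needed for this conclusion.
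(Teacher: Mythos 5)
The paper's own ``proof'' is a one-line citation to \cite[Proposition 4.46]{Craven11}, so you are supplying an argument where the paper gives none; the trouble is that what you prove is not the statement Craven proves and the paper actually needs. Your deduction is formally valid if one reads Definition \ref{def:general-fusion-system-notions}(\ref{enu:F-centric-radical}) literally, but that definition is evidently a misprint: as printed it demands $\Aut_{\F}\left(P\right)=\Inn\left(P\right)$, i.e.\ $\Out_{\F}\left(P\right)=1$, whereas the standard meaning of \emph{$\F$-centric-radical} --- the one in \cite[Proposition 4.46]{Craven11}, and the one the paper must be using wherever $\Fcr$ appears (for instance, Lemma \ref{lem:clelland-parker-vanishing-subsystems} feeds Proposition \ref{prop:centric-radical-contains-normal} into \cite[Proposition 10.5]{Yal22}, a statement about collections containing all centric-radical subgroups in the standard sense) --- is $O_{p}\left(\Aut_{\F}\left(P\right)\right)=\Inn\left(P\right)$, equivalently $O_{p}\left(\Out_{\F}\left(P\right)\right)=1$. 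Under the intended definition your key step collapses: from $c_{g}\in\Aut_{S}\left(P\right)\le\Aut_{\F}\left(P\right)$ you cannot conclude $c_{g}\in\Inn\left(P\right)$, since $\Inn\left(P\right)$ is then only the largest \emph{normal} $p$-subgroup of $\Aut_{\F}\left(P\right)$, and $c_{g}$ has no reason to lie in it.

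Your closing remark --- that the argument only uses $Q\trianglelefteq S$ --- is precisely the symptom of this gap: for the standard notion of radical, the statement with that weakened hypothesis is false. Take $\F=\FAB{S}{\Sym\left(4\right)}$ with $S\in\Syl_{2}\left(\Sym\left(4\right)\right)$ dihedral of order $8$, let $Q\le S$ be the cyclic subgroup of order $4$ (normal in $S$ but not in $\F$), and let $P=\left\langle \left(12\right)\left(34\right),\left(13\right)\left(24\right)\right\rangle $. Then $P$ is $\F$-centric, $\Aut_{\F}\left(P\right)\cong\Sym\left(3\right)$ and $O_{2}\left(\Aut_{\F}\left(P\right)\right)=1=\Inn\left(P\right)$, so $P$ is $\F$-centric-radical in the standard sense, yet $Q\not\le P$. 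A correct proof must therefore use normality of $Q$ in $\F$ in an essential way, and your skeleton can be repaired to do so: one shows that $\Aut_{N_{QP}\left(P\right)}\left(P\right)$ is a \emph{normal} $p$-subgroup of $\Aut_{\F}\left(P\right)$, because normality of $Q$ in $\F$ provides, for each $\alpha\in\Aut_{\F}\left(P\right)$, an extension $\tilde{\alpha}\in\Aut_{\F}\left(QP\right)$ with $\tilde{\alpha}\left(Q\right)=Q$ and $\tilde{\alpha}|_{P}=\alpha$, whence $\alpha c_{g}\alpha^{-1}=c_{\tilde{\alpha}\left(g\right)}$ with $\tilde{\alpha}\left(g\right)\in N_{QP}\left(P\right)$. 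Radicality then gives $\Aut_{N_{QP}\left(P\right)}\left(P\right)\le O_{p}\left(\Aut_{\F}\left(P\right)\right)=\Inn\left(P\right)$, and from there your centricity argument ($c_{g}=c_{x}$ with $x\in P$, so $x^{-1}g\in C_{S}\left(P\right)=Z\left(P\right)\le P$) yields $N_{QP}\left(P\right)=P$, contradicting $P\lneq QP$ via the normalizer-growing property exactly as you set it up.
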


\begin{proof}
See \cite[Proposition 4.46]{Craven11}.
\end{proof}
As a consequence we obtain the following.
\begin{lem}
\label{lem:clelland-parker-vanishing-subsystems}Let $\F=\F\left(r,n,k,X\right)$,
let $\mathcal{C}=\Fc$ be the family of $\F$-centric subgroups of
$S$, let $M^{*}$ be the contravariant part of a Mackey functor over
$\F$ with coefficients in $\Fp$ and write $M:=M^{*}\downarrow_{\OFC}^{\OF}$.
Then $\limn[i]_{\OFC[\mathcal{H}]}\left(M\downarrow_{\OFC[\mathcal{H}]}^{\OFC}\right)=0$
for every $i\ge1$ and every $\mathcal{H}\in\left\{ \FAB{S}{P_{r}},\FAB{S_{X}}{P_{r}\cap P_{X,r}},\FAB{S_{X}}{P_{X,r}}\right\} $.
\end{lem}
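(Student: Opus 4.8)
The plan is to realize the three fusion systems as the vertex and edge fusion systems of the tree of groups underlying the amalgam of Equation (\ref{eq:realization-clelland-parker}), and then to appeal to the general vanishing principle recorded in the paragraph preceding Proposition \ref{prop:cohomology-iso-higher-limit}. Writing $\T$ for the tree with vertices $\boldsymbol{1},\boldsymbol{2}$ and single edge $\boldsymbol{e}$, the identity $F(r,n,k,X)=P_r*_{P_r\cap P_{X,r}}P_{X,r}$ exhibits $\F=\F(r,n,k,X)$ as the completion of the $p$-local tree of groups with $\G(\boldsymbol{1})=P_r$, $\G(\boldsymbol{2})=P_{X,r}$ and $\G(\boldsymbol{e})=P_r\cap P_{X,r}$, whose associated fusion systems $\FG(\boldsymbol{1}),\FG(\boldsymbol{2}),\FG(\boldsymbol{e})$ are exactly the three $\mathcal{H}$ in the statement. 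Since $M$ is the contravariant part of a Mackey functor with $\Fp$-coefficients and $\mathcal{C}=\Fc$ consists of $\F$-centric subgroups and is closed under $\F$-conjugation and overgroups, that principle (resting on \cite[Proposition 10.5]{Yal22} and \cite[Theorem B]{DiazPark15}) yields the desired vanishing provided $\mathcal{C}$ contains every $\FG(X)$-centric-radical subgroup for each $X\in\Ob(\T)$.

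Thus the proof reduces to verifying, for each of the three $\mathcal{H}$, that the collection $\mathcal{H}^{\mathrm{cr}}$ of $\mathcal{H}$-centric-radical subgroups is contained in $\Fc$. For this I would invoke Proposition \ref{prop:centric-radical-contains-normal}: it suffices to exhibit, for each $\mathcal{H}$, a single subgroup that is at once normal in $\mathcal{H}$ and $\F$-centric, since every $\mathcal{H}$-centric-radical subgroup then contains it and is therefore itself $\F$-centric (as $\Fc$ is closed under overgroups).

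For $\FAB{S_X}{P_{X,r}}$ and $\FAB{S_X}{P_r\cap P_{X,r}}$ the witness is $X$, which is $\F$-centric by Lemma \ref{lem:Clelland-Parker-X-and-A-are-centric}(\ref{enu:Clelland-Parker-X-is-F-centric}); it is normal in $P_{X,r}=L_X\langle x\rangle$ because $X\trianglelefteq P_X$, $L_X\le P_X$ and $x\in N_P(X)$, and it is normal in $P_r\cap P_{X,r}$ because $P_r\cap P_{X,r}\le N_P(X)$ by \cite[Lemma 1]{Robinson07}. For $\FAB{S}{P_r}$ the witness is $A$, which is $\F$-centric by Lemma \ref{lem:Clelland-Parker-X-and-A-are-centric}(\ref{enu:Clelland-Parker-A-is-F-centric.}). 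To see that $A\trianglelefteq P_r$ I would first show $P_r\le P$: since $S_X$ is the unipotent radical of the Borel $\psi_X(N_P(X))$ of $G_X$, the normalizer $N_{G_X}(S_X)$ equals that Borel, and as it lies in $P_X$ we obtain $N_{O^{p'}(P_X)}(S_X)\le N_{P_X}(S_X)=\psi_X(N_P(X))=N_P(X)\le P$; since moreover $O^{p'}(P)\le P$ and $x\in N_P(X)\le P$, every generator of $P_r$ lies in $P$. Then $A\trianglelefteq P$ together with $A\le O^{p'}(P)\le P_r$ gives $A\trianglelefteq P_r$.

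I expect the genuine obstacle to be precisely this normality $A\trianglelefteq P_r$: unlike the two $X$-cases it is not formal and relies on the structural observation that the $P_X$-part of $P_r$ is absorbed into the Borel $N_P(X)$. The remaining work is routine $p$-local bookkeeping, namely checking $S\in\Syl_p(P_r)$ and $S_X\in\Syl_p(P_{X,r})\cap\Syl_p(P_r\cap P_{X,r})$, so that the three fusion systems are realizable and the tree of groups is $p$-local, which is what legitimizes the appeal to sharpness for realizable fusion systems through the cited principle.
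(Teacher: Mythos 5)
Your proof follows essentially the same route as the paper's: reduce to showing $\mathcal{H}^{\mathrm{cr}}\subseteq\Fc$ for each of the three fusion systems via Proposition \ref{prop:centric-radical-contains-normal} with witnesses $X$ and $A$ (whose $\F$-centricity comes from Lemma \ref{lem:Clelland-Parker-X-and-A-are-centric}), then conclude by \cite[Proposition 10.5]{Yal22} together with sharpness for realizable fusion systems \cite[Theorem B]{DiazPark15}. The only difference is that you spell out the verification that $P_{r}\le P$ (so that $A\trianglelefteq P$ yields $A\trianglelefteq P_{r}$), a point the paper's proof leaves implicit in the phrase ``Since $A$ is normal in $P$''.
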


\begin{proof}
Since $X\le S_{X}$ is normal in $P_{X}$ (and therefore in $P_{X,r}$
and $P_{r}\cap P_{X,r}$) then it is normal in the fusion systems
$\FAB{S_{X}}{P_{X,r}}$ and $\FAB{S_{X}}{P_{r}\cap P_{X,r}}$. It
follows from Proposition \ref{prop:centric-radical-contains-normal},
that every $Y\in\left(\FAB{S_{X}}{P_{X,r}}\right)^{\text{cr}}\cup\left(\FAB{S_{X}}{P_{r}\cap P_{X,r}}\right)^{\text{cr}}$
contains $X$ and therefore, because of Lemma \ref{lem:Clelland-Parker-X-and-A-are-centric}(\ref{enu:Clelland-Parker-X-is-F-centric}),
is $\F$-centric.

Since $A$ is normal in $P$, it also follows from Lemma \ref{lem:Clelland-Parker-X-and-A-are-centric}(\ref{enu:Clelland-Parker-A-is-F-centric.})
that $\left(\FAB{S}{P_{r}}\right)^{\text{cr}}\subseteq\Fc$.

It follows from \cite[Proposition 10.5]{Yal22} (see also \cite[Corollary 3.6]{BLO03})
that $\limn[i]_{\OFC[\mathcal{H}]}M\downarrow_{\OFC[\mathcal{H}]}^{\OFC}=\limn[i]_{\OFc[\mathcal{H}]}M\downarrow_{\OFc[\mathcal{H}]}^{\OFC}$
for every $i\ge0$. Since $\mathcal{H}$ is a realizable fusion system,
the result follows from \cite[Theorem B]{DiazPark15}.
\end{proof}
Fix $r$, $n$, $k$ and $X$ and define $\F=\F\left(r,n,k,X\right)$.
Adopting Notation \ref{nota:simple-tree-of-fusion-systems}, let $S'=S_{X}$,
$G_{\boldsymbol{1}}=P_{r}$, $G_{\boldsymbol{2}}=P_{X,r}$ and $G_{\boldsymbol{e}}=P_{r}\cap P_{X,r}$.
In particular we have that
\begin{align*}
\F_{\boldsymbol{1}} & =\FAB{S}{P_{r}}, & \F_{\boldsymbol{2}} & =\FAB{S_{X}}{P_{X,r}}, & \F_{\boldsymbol{e}} & =\FAB{S_{X}}{P_{r}\cap P_{X,r}},
\end{align*}
and $G=\G_{\T}=F\left(r,n,k,X\right)$. Let $\tilde{\T}$ be the orbit
graph associated to the tree of groups $\left(\T,\G\right)$. Then
the following result holds
\begin{lem}
\label{lem:Clelland-Parker-finite-fixed-point-tree}The tree $\tilde{\T}^{Y}$
is finite for every $Y\in\Fc$.
\end{lem}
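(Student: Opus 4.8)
The plan is to convert finiteness of the fixed subtree into finiteness of a centraliser. Recall that $\tilde{\T}$ is the Bass--Serre tree of the amalgam $G=F\left(r,n,k,X\right)=P_r*_{P_r\cap P_{X,r}}P_{X,r}$, so its vertex stabilisers are the $G$-conjugates of the finite groups $P_r$ and $P_{X,r}$ and its edge stabilisers the $G$-conjugates of $P_r\cap P_{X,r}$. In particular $\tilde{\T}$ is locally finite (the degree of a vertex is one of the finite indices $\left[P_r:P_r\cap P_{X,r}\right]$, $\left[P_{X,r}:P_r\cap P_{X,r}\right]$), and by Corollary~\ref{cor:homology-above-2-is-0} the fixed set $\tilde{\T}^Y$ is a subtree. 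Now $C_G\left(Y\right)$ preserves $\tilde{\T}^Y$, acting with finite vertex stabilisers $C_G\left(Y\right)\cap\G\left(v\right)^g=C_{\G\left(v\right)^g}\left(Y\right)$ and with finite quotient $\tilde{\T}^Y/C_G\left(Y\right)\cong\Rep_\F\left(Y,\FG\right)$ (Lemma~\ref{lem:isomorphism-quotient-graph}, the latter finite since each $\Rep_\F\left(Y,\FG\left(v\right)\right)$ is a quotient of the finite set $\Hom_\F\left(Y,\mathcal{S}\left(v\right)\right)$). A tree carrying such an action is finite exactly when the acting group is: finitely many orbits of finite size give a finite tree, and conversely an action on a finite tree has finite kernel (inside a finite vertex stabiliser) and finite image. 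So the first step reduces the statement to the claim that $C_G\left(Y\right)$ is finite for every $Y\in\Fc$.

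Second, I would pass to the full Clelland--Parker amalgam $F_X=P*_{N_P\left(X\right)}P_X$. Since $F_X/F_X^*\cong k^*$ and $G$ is the index-$r$ subgroup containing $F_X^*$, one has $\left[F_X:G\right]=r$, whence $C_G\left(Y\right)=C_{F_X}\left(Y\right)\cap G$ has finite index in $C_{F_X}\left(Y\right)$; thus it is equivalent to show $C_{F_X}\left(Y\right)$ is finite. This group acts on the Bass--Serre tree of $F_X$ cocompactly and with finite (hence torsion) vertex stabilisers, so it is finite precisely when it contains no hyperbolic element, equivalently when $Y$ fixes no infinite ray. This is exactly where the $\F$-centricity of $Y$ must be used: since $Y\in\Fc$, for every finite $H$ with $Y\le H\le F_X$ the group $Z\left(Y\right)$ is a Sylow $p$-subgroup of $C_H\left(Y\right)$, so every edge-group centraliser $C_{N_P\left(X\right)^g}\left(Y\right)$ is $Z\left(Y\right)$-by-$p'$.

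The core of the argument, and the step I expect to be the \textbf{main obstacle}, is to rule out an infinite $Y$-fixed ray using the specific structure of the amalgam. The available ingredients are those already exploited in Lemma~\ref{lem:Clelland-Parker-X-and-A-are-centric}(\ref{enu:Clelland-Parker-F-conjugate-iff-F1-conjugate}): $A\trianglelefteq P$ is the unique abelian subgroup of maximal order and is weakly $\F$-closed, $X\trianglelefteq P_X$, the explicit description of $N_P\left(X\right)$ in Equation~(\ref{eq:NPX}), and Robinson's lemma (\cite[Lemma 1]{Robinson07}) that a chain of $G$-conjugacies between subgroups of the vertex groups may be taken to remain inside the edge group $N_P\left(X\right)$. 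The plan is to feed a putative infinite $Y$-fixed ray into this machinery: its edges determine an infinite reduced word whose initial segments conjugate $Y$ into successive conjugates of $N_P\left(X\right)$, and the rigidity behind Lemma~\ref{lem:Clelland-Parker-X-and-A-are-centric}(\ref{enu:Clelland-Parker-F-conjugate-iff-F1-conjugate}) (an element moving the relevant normal subgroup out of $N_P\left(X\right)$ cannot itself lie in $N_P\left(X\right)$) should force the word to be eventually trivial, contradicting infiniteness of the ray. Upgrading this rigidity from $X$ itself to an arbitrary $\F$-centric $Y$ is the delicate point; I expect it to require combining the weak closure of $A$ and the normality of $X$ with the fact that $Z\left(Y\right)$ exhausts the $p$-part of the edge-group centralisers, so that the presence of a translating (hyperbolic) element along an axis is incompatible with $Y$ being self-centralising in the Sylow sense.
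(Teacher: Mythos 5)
Your proposal has a genuine gap, and it sits exactly where you flag it. The paper itself does not argue this lemma internally: its entire proof is the citation ``this is precisely what is proven in \cite[Theorem 4.9]{ClellandParker10}''. Judged as a self-contained argument, your two reductions are sound in outline --- $\tilde{\T}^{Y}$ is a nonempty subtree on which $C_{G}\left(Y\right)$ acts with finite vertex stabilisers and finite quotient (Lemma \ref{lem:isomorphism-quotient-graph} together with finiteness of each $\Hom_{\F}\left(Y,\mathcal{S}\left(v\right)\right)$), so finiteness of $\tilde{\T}^{Y}$ is equivalent to finiteness of $C_{G}\left(Y\right)$ --- but the step that remains, ruling out a hyperbolic element of the centraliser, equivalently an infinite $Y$-fixed ray, \emph{is} the lemma. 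For it you offer only a plan: feed a putative ray into the rigidity used in Lemma \ref{lem:Clelland-Parker-X-and-A-are-centric}(\ref{enu:Clelland-Parker-F-conjugate-iff-F1-conjugate}) and hope it ``should force the word to be eventually trivial''. That rigidity (if $a\in P$ and $X^{a}\le N_{P}\left(X\right)$ then $a\in N_{P}\left(X\right)$) is special to $X$: it uses that $X$ is normal in $P_{X}$ and the explicit normaliser of Equation (\ref{eq:NPX}). Nothing in the proposal upgrades it to an arbitrary $Y\in\Fc$, and that upgrade is precisely the case analysis carried out in \cite[Theorem 4.9]{ClellandParker10} using the fine structure of $P$, $P_{X}$ and the module $A$. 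Acknowledging this step as ``the main obstacle'' does not discharge it.

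There are also two inaccuracies in the reduction you do carry out. First, from $Y\in\Fc$ you infer that $Z\left(Y\right)$ is a Sylow $p$-subgroup of $C_{H}\left(Y\right)$ for every finite $H$ with $Y\le H\le F_{X}$; but $\F$-centricity only controls conjugation by elements of $G=F\left(r,n,k,X\right)$, so this is justified for $H\le G$ (as used in the proof of Proposition \ref{prop:first-homology}) and for general $H\le F_{X}$ it would require $Y$ to be $\FAB{S}{F_{X}}$-centric, a strictly stronger hypothesis that you have not established. Second, $C_{F_{X}}\left(Y\right)$ does not obviously act cocompactly on the full Bass--Serre tree of $F_{X}$; finiteness of the quotient is what the $\Rep$ argument gives for the action on the $Y$-fixed subtree, and it is on that subtree that the dichotomy ``hyperbolic element or finite group'' should be run. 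Neither slip is fatal, but both would need repair before the missing core argument could even be attempted; alternatively, one can simply cite \cite[Theorem 4.9]{ClellandParker10}, as the paper does.
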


\begin{proof}
This is precisely what is proven in \cite[Theorem 4.9]{ClellandParker10}.
\end{proof}
We can now prove the first half of Theorem \hyperref[thm:C]{C}.
\begin{prop}
\label{prop:shar-Clelland-Parker}Let $\F$ be a Clelland-Parker fusion
system, let $M^{*}$ be the contravariant part of a Mackey functor
over $\F$, with coefficients in $\Fp$ and write $M:=M^{*}\downarrow_{\OFc}^{\OF}$.
Then $\limn[i]_{\OFc}\left(M\right)=0$ for every $i\ge2$ while 
\[
\limn[1]_{\OFc}\left(M\right)=M^{\F_{\boldsymbol{e}}}/\left(\overline{\iota_{S'}^{S}}\left(M^{\F_{\boldsymbol{1}}}\right)+M^{\F_{\boldsymbol{2}}}\right).
\]
Moreover $\underset{\OFc}{\limn[i]}\left(H^{j}\left(-;\Fp\right)\right)=0$
for every $i\ge1$ and every $j\ge0$.
\end{prop}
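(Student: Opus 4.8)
The plan is to verify the three hypotheses of Theorem \hyperref[thm:B]{B} for the choice $\mathcal{C}=\Fc$ and then simply read off the conclusion. Recall that Theorem \hyperref[thm:B]{B} asks that $\F$ be saturated, that $\Fcr_{x}\subseteq\mathcal{C}$ for every $x\in\left\{ \boldsymbol{1},\boldsymbol{2},\boldsymbol{e}\right\} $, and that $\CGp\left(P\right)=0$ for every $P\in\mathcal{C}$. The first two are essentially already in hand: $\F$ is a Clelland-Parker fusion system and hence saturated (see \cite{ClellandParker10}), while the containments $\Fcr_{x}\subseteq\Fc$ are precisely what is established inside the proof of Lemma \ref{lem:clelland-parker-vanishing-subsystems} (there one uses that $X$ is normal in $\F_{\boldsymbol{2}}$ and $\F_{\boldsymbol{e}}$ and the normality of $A$ for $\F_{\boldsymbol{1}}$, together with Proposition \ref{prop:centric-radical-contains-normal} and Lemma \ref{lem:Clelland-Parker-X-and-A-are-centric}, to force every centric-radical subgroup of each $\F_{x}$ to be $\F$-centric). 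Indeed that lemma already yields the vanishing of all higher limits over the $\F_{x}$ required to run Theorem \hyperref[thm:A]{A}.

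The substance of the argument, and the step I expect to be the main obstacle, is showing $\CGp\left(P\right)=0$ for every $P\in\Fc$. Here I would invoke Lemma \ref{lem:Clelland-Parker-finite-fixed-point-tree}, which guarantees that the fixed-point tree $\tilde{\T}^{P}$ is finite whenever $P\in\Fc$. Since $G$ acts on $\tilde{\T}$ without inversion (see the proof of Corollary \ref{cor:homology-above-2-is-0}) and $C_{G}\left(P\right)$ preserves the $P$-fixed subtree $\tilde{\T}^{P}$, the group $C_{G}\left(P\right)$ acts without inversion on the finite tree $\tilde{\T}^{P}$. All of its orbits are then finite, hence bounded, so by the standard fixed-point property for groups acting on trees with bounded orbits (see \cite{Serre80}) the group $C_{G}\left(P\right)$ fixes a vertex $v=\G\left(Y\right)y$ of $\tilde{\T}^{P}$.

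From this fixed vertex the conclusion would follow exactly as in the final paragraph of the proof of Proposition \ref{prop:first-homology}. Fixing $v$ means $C_{G}\left(P\right)\le\G\left(Y\right)^{y}$, and because $v\in\tilde{\T}^{P}$ we also have $P\le\G\left(Y\right)^{y}$; in particular $C_{G}\left(P\right)=C_{\G\left(Y\right)^{y}}\left(P\right)$ is finite. Since $P$ is $\F$-centric, $Z\left(P\right)$ is a Sylow $p$-subgroup of $C_{G}\left(P\right)$, so $C_{G}\left(P\right)/Z\left(P\right)$ is a $p'$-group and therefore $\Ab\left(C_{G}\left(P\right)/Z\left(P\right)\right)\otimes\Fp=0$. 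Proposition \ref{prop:first-homology} then gives $\CGp\left(P\right)=0$, as needed. Note that the finiteness of $\tilde{\T}^{P}$ is the only genuinely Clelland-Parker-specific input: it is exactly what converts the abstract hypothesis $\CGp=0$ into something checkable here, and it is where a non-tree quotient $\tilde{\T}^{P}/C_{G}\left(P\right)$ would otherwise cause trouble.

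With all three hypotheses verified, Theorem \hyperref[thm:B]{B} applies directly and delivers $\limn[i]_{\OFc}\left(M\right)=0$ for every $i\ge2$, the identification of $\limn[1]_{\OFc}\left(M\right)$ with $M^{\F_{\boldsymbol{e}}}/\left(\overline{\iota_{S'}^{S}}\left(M^{\F_{\boldsymbol{1}}}\right)+M^{\F_{\boldsymbol{2}}}\right)$, and, upon specializing $M^{*}$ to the cohomology functor $H^{j}\left(-;\Fp\right)$ (which is the contravariant part of a Mackey functor) and appealing to Lemma \ref{lem:vanishing-first-limit}, the vanishing $\limn[i]_{\OFc}\left(H^{j}\left(-;\Fp\right)\right)=0$ for all $i\ge1$ and all $j\ge0$.
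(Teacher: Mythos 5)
Your proposal is correct, and its overall skeleton (verify the hypotheses of Theorem \hyperref[thm:B]{B} with $\mathcal{C}=\Fc$, with Lemma \ref{lem:Clelland-Parker-finite-fixed-point-tree} as the Clelland-Parker-specific input) matches the paper's proof. Where you genuinely diverge is in how you convert finiteness of $\tilde{\T}^{Y}$ into $\CGp\left(Y\right)=0$. The paper does this in one line by citing \cite[Lemma 3.3]{ClellandParker10}: a finite fixed-point tree has quotient $\tilde{\T}^{Y}/C_{G}\left(Y\right)$ a tree, so $\CGp\left(Y\right)=H_{1}$ of a tree vanishes directly from Definition \ref{def:CGp}. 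You instead argue group-theoretically: $C_{G}\left(Y\right)$ acts without inversion on the finite tree $\tilde{\T}^{Y}$, so by Serre's bounded-orbit fixed-point theorem it fixes a vertex $\G\left(X\right)g$, hence $C_{G}\left(Y\right)\le\G\left(X\right)^{g}$ is finite, $Z\left(Y\right)$ is Sylow in it by $\F$-centricity, and Proposition \ref{prop:first-homology} kills $\Ab\left(C_{G}\left(Y\right)/Z\left(Y\right)\right)\otimes\Fp$. This is in effect a self-contained reproof of the paper's unnumbered corollary following Proposition \ref{prop:first-homology} (whose hypothesis, that $C_{G}\left(Y\right)$ is conjugate into a vertex group, is exactly what the fixed-point theorem supplies), and it trades the external citation for standard Bass-Serre theory plus the paper's own algebraic characterization of $\CGp$; the paper's route is shorter but leans on a lemma proved elsewhere, while yours makes transparent why finiteness of $\tilde{\T}^{Y}$ is the decisive hypothesis. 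A minor further difference, in your favor: you explicitly check saturation and the containments $\Fcr_{x}\subseteq\Fc$ (via the proof of Lemma \ref{lem:clelland-parker-vanishing-subsystems}), which the paper leaves implicit when it says the result follows from Theorem \hyperref[thm:B]{B}.
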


\begin{proof}
We know from Lemma \ref{lem:Clelland-Parker-finite-fixed-point-tree}
that $\tilde{\T}^{Y}$ is finite for every $Y\in\Fc$. It follows
from \cite[Lemma 3.3]{ClellandParker10} that $\tilde{\T}^{Y}/C_{G}\left(Y\right)$
is a tree and, therefore, $\CGp\left(Y\right)=0$. The result follows
from Theorem \hyperref[thm:B]{B}.
\end{proof}

\subsection{\protect\label{subsec:Parker-Stroth}The Parker-Stroth fusion systems.}

In \cite{ParkerStroth15} Parker and Stroth describe a family of exotic
fusion systems. Like the Clelland-Parker fusion systems described
in Subsection \ref{subsec:Clelland-Parker} this family is related
to an action on homogeneous polynomials on two variables. However,
the notation used in \cite{ParkerStroth15} differs from that used
in \cite{ClellandParker10} and Subsection \ref{subsec:Clelland-Parker}.
In order to be consistent with our notation we slightly modify the
notation of \cite{ParkerStroth15} in the following description of
the Parker-Stroth fusion systems. The reader who wishes to compare
the present document with \cite{ParkerStroth15} might find the following
notation conversion table useful.
\begin{center}
\begin{tabular}{|l|c|c|}
\hline 
Description & Our notation & Notation in \cite{ParkerStroth15}\tabularnewline
\hline 
\hline 
\multirow{1}{*}{field of $k$ elements} & $\Fp[k]$ or $k$ & $\mathbb{F}$\tabularnewline
\hline 
group of invertible elements & $\Fp[k]^{*}$ or $k^{*}$ & $\mathbb{F}^{\times}$\tabularnewline
\hline 
\multirow{2}{*}{\begin{cellvarwidth}[t]
degree $n$ homogeneous polynomials\\
over $k$ in $2$ variables
\end{cellvarwidth}} & \multirow{2}{*}{$A\left(n,k\right)$} & \multirow{2}{*}{$V_{n}$}\tabularnewline
 &  & \tabularnewline
\hline 
degree of homogeneous polynomials & $n$ & $m$\tabularnewline
\hline 
group $k^{*}\times\text{GL}_{2}\left(k\right)$ & $D$ & $L$\tabularnewline
\hline 
Sylow $p$-subroup of $D$ & $U$ & $S_{0}$\tabularnewline
\hline 
\end{tabular}
\par\end{center}

The usage of the symbols $P$, $X$, $G$, $S$ and $Q$ in \cite{ParkerStroth15}
and Subsection \ref{subsec:Clelland-Parker} also differ. In this
subsection we will give $G$, $S$ and $Q$ the meaning used in \cite{ParkerStroth15}
and we will limit the usage of the other symbols in order to avoid
confusions.

\smallskip{}

Let $p\ge5$ be a prime, let $n=p-4$, define $A:=A\left(n,p\right)$
and define $\beta_{n}:A\times A\to p$ as the bilinear form satisfying
\[
\beta_{n}\left(x^{a}y^{b},x^{c}y^{d}\right)=\begin{cases}
\frac{\left(-1\right)^{a}}{\binom{n}{a}} & \text{if }a=d\\
0 & \text{else}
\end{cases}.
\]
The set $Q:=A\times\Fp$ can be given a non-abelian group structure
by defining the product as
\[
\left(v,y\right)\left(w,z\right)=\left(v+w,y+z+\beta_{n}\left(v,w\right)\right).
\]
Define the group $D:=\Fp^{*}\times\text{GL}_{2}\left(\Fp\right)$
as in Subsection \ref{subsec:Clelland-Parker}. There exists a right
action of $D$ on $Q$ obtained by defining for every $\left(v,y\right)\in Q$
and every $\left(t,M\right)\in D$
\[
\left(v,y\right)\cdot\left(t,M\right):=\left(v\cdot\left(t,M\right),t^{2}\det\left(M\right)^{n}z\right),
\]
here the action of $D$ on $A$ is as in Subsection \ref{subsec:Clelland-Parker}.
In \cite[Lemma 2.3 and Theorem 2.9]{ParkerStroth15} Parker and Stroth
prove that 
\begin{equation}
C_{D}\left(Q\right)=\left\{ \left(\mu^{-n},\begin{pmatrix}\mu & 0\\
0 & \mu
\end{pmatrix}\right)\,:\,\mu\in\Fp^{*}\right\} ,\label{eq:description-CDQ}
\end{equation}
 and that the group $P_{1}:=\left(D\ltimes Q\right)/C_{D}\left(Q\right)$
contains a subgroup isomorphic to the subgroup
\[
C:=\left\{ \begin{pmatrix}1 & 0 & 0\\
\alpha & \theta & 0\\
\delta & \varepsilon & \theta^{-1}
\end{pmatrix}\in\text{SL}_{3}\left(\Fp\right)\right\} ,
\]
of
\[
K:=\left\{ \begin{pmatrix}1 & 0 & 0\\
\alpha & \beta & \gamma\\
\delta & \varepsilon & \phi
\end{pmatrix}\in\text{SL}_{3}\left(\Fp\right)\right\} .
\]
Using this isomorphism, it is possible to define the amalgam $G=P_{1}*_{C}K$
and view $K$, $C$ and $P_{1}$ as subgroups of $G$. Taking $U\in\Syl_{p}\left(D\right)$
as in Subsection \ref{subsec:Clelland-Parker}, we can further define
$S:=\left(UQC_{D}\left(Q\right)\right)/C_{D}\left(Q\right)\le P_{1}$
and $S':=S\cap C$ (denoted as $D$ in \cite{ParkerStroth15}). It
turns out (see \cite[page 320]{ParkerStroth15}) that $S$ is a Sylow
$p$-subgroup of $P_{1}$ and that $S'\cong p_{+}^{1+2}$ is a Sylow
$p$-subgroup of both $C$ and $K$. Moreover, the fusion system $\F:=\FSG$
is saturated (see \cite[Lemma 3.2(iii)]{ParkerStroth15}) and exotic
(see \cite[Lemma 3.4]{ParkerStroth15}). We call any fusion system
$\F$ constructed in this manner a Perker-Stroth fusion system.

With this notation in place we can now prove the following.
\begin{lem}
\label{lem:parker-stroth-vanishing-subsystems}Let $\mathcal{C}$
be the family of $\F$-centric subgroups of $S$, let $M^{*}$ be
the contravariant part of a Mackey functor over $\F$ with coefficients
in $\Fp$ and write $M:=M^{*}\downarrow_{\OFC}^{\OF}$. Then $\limn[i]_{\OFC[\mathcal{H}]}\left(M\downarrow_{\OFC[\mathcal{H}]}^{\OFC}\right)=0$
for every $i\ge1$ and every $\mathcal{H}\in\left\{ \FSG[P_{1}],\FAB{S'}{C},\FAB{S'}{K}\right\} $.
\end{lem}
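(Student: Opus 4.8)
The plan is to mirror the proof of Lemma \ref{lem:clelland-parker-vanishing-subsystems} for the Clelland-Parker systems. In each of the three cases the realizing group ($P_1$, $K$, or $C$) possesses a normal $p$-subgroup lying in the relevant Sylow subgroup, and the strategy is to show that this normal subgroup is $\F$-centric. Proposition \ref{prop:centric-radical-contains-normal} then guarantees that every $\mathcal{H}$-centric-radical subgroup contains it and is therefore itself $\F$-centric (as $\Fc$ is closed under overgroups in $S$), so that $\mathcal{H}^{\text{cr}}\subseteq\Fc=\mathcal{C}$ for each $\mathcal{H}\in\left\{\FSG[P_1],\FAB{S'}{C},\FAB{S'}{K}\right\}$. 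Granting this, I would use \cite[Proposition 10.5]{Yal22} to rewrite the higher limits over $\OFC[\mathcal{H}]$ as higher limits over $\OFc[\mathcal{H}]$ and then deduce their vanishing from sharpness for realizable fusion systems, exactly as in the Clelland-Parker argument.

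Concretely, I would take the normal subgroups to be $Q=O_p\left(P_1\right)$ (the image of $Q$ in $P_1$, which is normal because $C_D\left(Q\right)$ is central in $D\ltimes Q$), $V:=O_p\left(K\right)$ (the translation subgroup $\cong\Fp^2$ inside $S'$, on which the Levi complement $\text{SL}_2\left(\Fp\right)$ of $K$ acts), and $S'=O_p\left(C\right)$ (normal in $C$ since $C$ is an extension of $S'$ by a torus). Each is normal in the corresponding fusion system $\FSG[P_1]$, $\FAB{S'}{K}$, $\FAB{S'}{C}$, so Proposition \ref{prop:centric-radical-contains-normal} forces every centric-radical subgroup of that system to contain it. The key input is that $Q$ and $V$ are $\F$-centric; since $V\le S'$ and $\F$-centric subgroups are closed under overgroups in $S$, it follows that $S'$ is $\F$-centric as well. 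Hence every $\FSG[P_1]$-centric-radical subgroup contains the $\F$-centric subgroup $Q$, every $\FAB{S'}{K}$-centric-radical subgroup is an overgroup of the $\F$-centric subgroup $V$ inside $S'$, and every $\FAB{S'}{C}$-centric-radical subgroup must equal $S'$; in all three cases the centric-radical subgroups lie in $\Fc=\mathcal{C}$.

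It remains to verify that $Q$ and $V$ are $\F$-centric, and this is the step I expect to be the main obstacle, as it is the only place where the explicit structure of the Parker-Stroth systems enters. I would extract this from \cite{ParkerStroth15}, where $Q$ and $V$ arise as the essential (and hence $\F$-centric) subgroups attached to the two vertices of the amalgam $G=P_1*_C K$; concretely the computation reduces to checking $C_S\left(Q\right)=Z\left(Q\right)$ and $C_{S'}\left(V\right)=V$ on every $\F$-conjugate, where the latter uses that a maximal subgroup of the extraspecial group $S'\cong p_{+}^{1+2}$ is self-centralizing. With these facts in hand, the reduction \cite[Proposition 10.5]{Yal22} (see also \cite[Corollary 3.6]{BLO03}) gives $\limn[i]_{\OFC[\mathcal{H}]}\left(M\downarrow_{\OFC[\mathcal{H}]}^{\OFC}\right)\cong\limn[i]_{\OFc[\mathcal{H}]}\left(M\downarrow_{\OFc[\mathcal{H}]}^{\OFC}\right)$ for every $i\ge0$, and since each $\mathcal{H}$ is realizable (by $P_1$, $K$, or $C$) the right-hand side vanishes for all $i\ge1$ by \cite[Theorem B]{DiazPark15}, completing the proof.
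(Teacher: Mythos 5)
Your skeleton is the same as the paper's: pick a normal $p$-subgroup of each vertex/edge group, use Proposition \ref{prop:centric-radical-contains-normal} to force every $\mathcal{H}$-centric-radical subgroup to contain it, conclude $\mathcal{H}^{\text{cr}}\subseteq\Fc$ by closure of $\F$-centricity under overgroups, and finish with \cite[Proposition 10.5]{Yal22} plus sharpness for realizable fusion systems \cite[Theorem B]{DiazPark15}. The only cosmetic difference is that for the edge group the paper reuses $W=O_{p}\left(K\right)\le C$ instead of $S'=O_{p}\left(C\right)$; both choices work.

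The gap is exactly the step you flag as the main obstacle: proving that $O_{p}\left(P_{1}\right)$ is $\F$-centric. You propose to cite \cite{ParkerStroth15} for the claim that $O_{p}\left(P_{1}\right)$ and $O_{p}\left(K\right)$ are the essential (hence $\F$-centric) subgroups; but the paper only extracts from \cite[Lemma 3.3]{ParkerStroth15} that $W=O_{p}\left(K\right)$ is $\F$-centric, and has to construct its own argument for $O_{p}\left(P_{1}\right)$, which indicates the statement you want is not available off the shelf. Moreover, your fallback computation does not close the hole, for two reasons. First, $\F$-centricity quantifies over all $\F$-conjugates, and identifying those conjugates without already understanding $\F$ is the real difficulty; the paper resolves it by combining Robinson's generation theorem $\F=\left\langle \FSG[P_{1}],\FAB{S'}{K}\right\rangle$ (\cite[Theorem 1]{Robinson07}) with the order inequality $\left|O_{p}\left(P_{1}\right)\right|\ge\left|S'\right|$: a morphism of $\FAB{S'}{K}$ can only be applied once the image of $O_{p}\left(P_{1}\right)$ lies inside $S'$, which by order reasons forces that image to equal $S'$, so every $\F$-conjugate of $O_{p}\left(P_{1}\right)$ is already a $P_{1}$-conjugate; since $O_{p}\left(P_{1}\right)$ is normal in $P_{1}$ and centric in $S\cong UQ$ (an easy computation), $\F$-centricity follows. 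Second, your concrete check $C_{S'}\left(V\right)=V$ is taken in the wrong ambient group: $\F$-centricity requires $C_{S}\left(Y\right)=Z\left(Y\right)$ for every $\F$-conjugate $Y$, with the centralizer computed in all of $S$, not in $S'$. The remainder of your argument (the reduction via \cite[Proposition 10.5]{Yal22} and the appeal to \cite[Theorem B]{DiazPark15}) is correct and matches the paper.
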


\begin{proof}
Following the notation of \cite[page 320]{ParkerStroth15}, we define
$W:=O_{p}\left(K\right)$. Since $W$ is a normal subgroup of $K$
and $W\le S'$, it follows that $W$ is normal in $\FAB{S'}{K}$.
We conclude from Proposition \ref{prop:centric-radical-contains-normal},
that every subgroup in $\left(\FAB{S'}{K}\right)^{\text{cr}}$ contains
$W$. Since $W$ $\F$-centric (see \cite[Lemma 3.3]{ParkerStroth15}),
it follows that $\left(\FAB{S'}{K}\right)^{\text{cr}}\subseteq\Fc$.
Since $W\le C\le K$, the same arguments prove that every $\left(\FAB{S'}{C}\right)^{\text{cr}}\subseteq\Fc$.

We know from \cite[page 320]{ParkerStroth15} that $\left(QC_{D}\left(Q\right)\right)/C_{D}\left(Q\right)=O_{p}\left(P_{1}\right)$.
Since $C_{D}\left(Q\right)$ is a $p$-prime group (see Equation (\ref{eq:description-CDQ})),
we have that $S\cong UQ$ via the natural isomorphism which also sends
$\left(QC_{D}\left(Q\right)\right)/C_{D}\left(Q\right)$ to $Q$.
A straightforward computation shows that $Q$ is centric in $UQ$.
We conclude that $O_{p}\left(P_{1}\right)$ is $\FSG[P_{1}]$-centric.
Since $\left|O_{p}\left(P_{1}\right)\right|\ge\left|S'\right|$ and
$\F=\left\langle \FSG[P_{1}],\FAB{S'}{K}\right\rangle $ (see \cite[Theorem 1]{Robinson07}),
then every subgroup of $S$ conjugate to $O_{p}\left(P_{1}\right)$
in $\F$ is also conjugate in $\FSG[P_{1}]$. In particular $O_{p}\left(P_{1}\right)$
is $\F$-centric. Since $O_{p}\left(P_{1}\right)$ is also normal
in $\FSG[P_{1}]$ (because it is normal in $P_{1}$), then we conclude
from Proposition \ref{prop:centric-radical-contains-normal} that
every $\left(\FSG[P_{1}]\right)^{\text{cr}}\subseteq\Fc$.

It follows from \cite[Proposition 10.5]{Yal22} (see also \cite[Corollary 3.6]{BLO03})
that $\limn[i]_{\OFC[\mathcal{H}]}M\downarrow_{\OFC[\mathcal{H}]}^{\OFC}=\limn[i]_{\OFc[\mathcal{H}]}M\downarrow_{\OFc[\mathcal{H}]}^{\OFC}$
for every $i\ge0$. Since every $\mathcal{H}$ is a realizable fusion
system, then the result follows from \cite[Theorem B]{DiazPark15}.
\end{proof}
Proceeding as in Subsection \ref{subsec:Clelland-Parker} we can now
adopt Notation \ref{nota:simple-tree-of-fusion-systems} with $G_{\boldsymbol{1}}=P_{1}$,
$G_{\boldsymbol{2}}=K$ and $G_{\boldsymbol{e}}=C$. In particular
we have that
\begin{align*}
\F_{\boldsymbol{1}} & :=\FSG[P_{1}], & \F_{\boldsymbol{2}} & :=\FAB{S'}{C}, & \F_{\boldsymbol{e}} & :=\FAB{S'}{K},
\end{align*}
and $G=\G_{\T}=P_{1}*_{C}K$. Let $\tilde{\T}$ be the orbit graph
associated to the tree of groups $\left(\T,\G\right)$. Then the following
holds
\begin{lem}
\label{lem:quotient-is-tree-Parker-Stroth}The tree$\tilde{\T}^{Y}$
is finite for every $Y\in\Fc$.
\end{lem}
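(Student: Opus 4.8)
The plan is to mirror the proof of Lemma \ref{lem:Clelland-Parker-finite-fixed-point-tree} from the Clelland--Parker case. First I would record the basic features of the orbit graph $\tilde{\T}$ associated to $\left(\T,\G\right)$ (see Definition \ref{def:orbit-graph}): with $G=P_{1}*_{C}K$, its vertices are the right cosets of $P_{1}$ and of $K$, its edges are the right cosets of $C$, and all of these cell stabilizers are finite groups. In particular $\tilde{\T}$ is locally finite, since a $P_{1}$-vertex has valence $\left[P_{1}:C\right]$ and a $K$-vertex has valence $\left[K:C\right]=p+1$. I would then observe that, for $g\in G$, the group $Y$ fixes the vertex $P_{1}g$ (respectively $Kg$, respectively the edge $Cg$) if and only if $gYg^{-1}\le P_{1}$ (respectively $\le K$, respectively $\le C$); so the fixed subtree $\tilde{\T}^{Y}$ is governed entirely by the cosets along which $Y$ becomes subconjugate to the edge group $C$. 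Since right translation by $g$ carries $\tilde{\T}^{Y}$ isomorphically onto $\tilde{\T}^{g^{-1}Yg}$, finiteness of $\tilde{\T}^{Y}$ depends only on the $G$-conjugacy class of $Y$, which for subgroups of $S$ is exactly the $\F$-conjugacy class as $\F=\FSG$. I would use this to replace $Y$ by a convenient representative of its $\F$-class, recalling that $\F=\left\langle \FSG[P_{1}],\FAB{S'}{K}\right\rangle$.

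The combinatorial heart of the argument is to rule out an infinite $\tilde{\T}^{Y}$. As $\tilde{\T}^{Y}$ is a subtree of the tree $\tilde{\T}$ (see \cite[Section I.6.1]{Serre80}) and is locally finite, were it infinite it would, by K\"onig's lemma, contain an infinite reduced ray every edge of which is fixed by $Y$. Such a ray yields infinitely many distinct cosets $Cg_{i}$ with $g_{i}Yg_{i}^{-1}\le C$. I would therefore reduce the lemma to the purely group-theoretic assertion that, for $Y\in\Fc$, only finitely many right cosets $Cg$ satisfy $gYg^{-1}\le C$; equivalently, that $Y$ lies in only finitely many conjugates of the edge group $C$ in the relevant counting sense.

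The main obstacle is precisely this finiteness assertion, and it is where the explicit structure of the Parker--Stroth amalgam is needed, just as \cite[Theorem 4.9]{ClellandParker10} supplies the corresponding input in the Clelland--Parker case. Using the matrix descriptions of $C\le K\le\text{SL}_{3}\left(\Fp\right)$ together with the description of $P_{1}=\left(D\ltimes Q\right)/C_{D}\left(Q\right)$, and exploiting that an $\F$-centric $Y$ is (after conjugation) large enough relative to the normal $p$-subgroups $O_{p}\left(P_{1}\right)$ and $O_{p}\left(K\right)$ that its subconjugates inside $C$ are pinned down, one shows that $\left\{ Cg:gYg^{-1}\le C\right\} $ is finite; this is the fixed-point computation underlying the saturation argument of \cite[Lemma 3.2]{ParkerStroth15}. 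Feeding this back through K\"onig's lemma shows that $\tilde{\T}^{Y}$ has only finitely many edges and hence is finite, completing the proof.
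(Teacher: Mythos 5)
Your Bass--Serre reductions are correct as far as they go: the vertices and edges of $\tilde{\T}$ are indeed the cosets $P_{1}g$, $Kg$ and $Cg$; the subgroup $Y$ fixes such a cell precisely when $gYg^{-1}$ lies in $P_{1}$, $K$ or $C$ respectively; $\tilde{\T}$ is locally finite since $P_{1},K,C$ are finite; and an infinite fixed subtree would therefore produce infinitely many cosets $Cg$ with $gYg^{-1}\le C$. The problem is that your argument stops exactly where the content of the lemma lies. The assertion that $\left\{ Cg\,:\,gYg^{-1}\le C\right\} $ is finite for every $Y\in\Fc$ is, modulo your (correct) reductions, equivalent to the statement being proved, and you never prove it: you say that ``one shows'' it using the matrix descriptions of $C\le K$ and of $P_{1}$, and that it is ``the fixed-point computation underlying'' \cite[Lemma 3.2]{ParkerStroth15}, but no computation is carried out and no precise statement from that reference is quoted. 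As written, the proof is a correct reduction followed by an appeal to the very fact to be established, so there is a genuine gap.

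For comparison, the paper's own proof is a single line: \cite[Lemma 3.2]{ParkerStroth15} already gives finiteness of the fixed-point tree for every subgroup of $S$ of order at least $2$, and every $\F$-centric subgroup is nontrivial (a trivial centric subgroup would force $S=C_{S}\left(1\right)=Z\left(1\right)=1$). Note also that the hypothesis actually needed from centricity is just this nontriviality, not the comparison with $O_{p}\left(P_{1}\right)$ and $O_{p}\left(K\right)$ that you invoke. So the repair is one of two things: either cite \cite[Lemma 3.2]{ParkerStroth15} as stated, in which case your entire K\"onig's-lemma apparatus is superfluous because that lemma directly yields that $\tilde{\T}^{Y}$ is finite; or genuinely carry out the coset-finiteness computation for the Parker--Stroth amalgam, which is real work (it is the analogue of \cite[Theorem 4.9]{ClellandParker10} in the Clelland--Parker case) and is precisely what your proposal leaves undone.
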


Proof. Since every $\F$-centric subgroup of $S$ has order at least
$2$, the result follows from \cite[Lemma 3.2]{ParkerStroth15}.

We can now prove the second half of Theorem \hyperref[thm:C]{C}.
\begin{prop}
\label{prop:shar-Parker-Stroth}Let $\F$ be a Parker-Stroth fusion
system, $M^{*}$ be the contravariant part of a Mackey functor over
$\F$ with coefficients in $\Fp$ and write $M:=M^{*}\downarrow_{\OFc}^{\OF}$.
Then $\limn[i]_{\OFc}\left(M\right)=0$ for every $i\ge2$ while 
\[
\limn[1]_{\OFc}\left(M\right)=M^{\F_{\boldsymbol{e}}}/\left(M^{\F_{\boldsymbol{1}}}\overline{\iota_{S'}^{S}}+M^{\F_{\boldsymbol{2}}}\right).
\]
Moreover $\underset{\OFc}{\limn[i]}\left(H^{j}\left(-;\Fp\right)\right)=0$
for every $i\ge1$ and every $j\ge0$.
\end{prop}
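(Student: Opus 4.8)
The plan is to obtain this as a direct application of Theorem \hyperref[thm:B]{B}, in exact parallel with the proof of Proposition \ref{prop:shar-Clelland-Parker}. With the amalgam decomposition $G = P_1 *_C K$ and the fusion systems $\F_{\boldsymbol{1}}, \F_{\boldsymbol{2}}, \F_{\boldsymbol{e}}$ fixed above, Theorem \hyperref[thm:B]{B} demands three inputs: that $\F$ is saturated, that $\Fcr_x \subseteq \mathcal{C} = \Fc$ for every $x \in \{\boldsymbol{1}, \boldsymbol{2}, \boldsymbol{e}\}$, and that $\CGp(P) = 0$ for every $P \in \Fc$. The first is known for the Parker-Stroth systems (see \cite[Lemma 3.2(iii)]{ParkerStroth15}), and the second is exactly what the proof of Lemma \ref{lem:parker-stroth-vanishing-subsystems} establishes, via the facts that $O_p(P_1)$ and $W = O_p(K)$ are $\F$-centric and normal in the relevant subsystems together with Proposition \ref{prop:centric-radical-contains-normal}. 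So the only genuinely new point is the vanishing of $\CGp$.

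For that vanishing I would argue directly from the geometry of the Bass-Serre tree rather than reprove a quotient-is-a-tree statement. By Lemma \ref{lem:quotient-is-tree-Parker-Stroth} the fixed subgraph $\tilde{\T}^P$ is a finite tree for every $P \in \Fc$, hence has a canonical center, either a single vertex or a single edge, preserved by every automorphism and in particular by $C_G(P)$. Since $G$, and hence $C_G(P)$, acts on $\tilde{\T}$ without inversion (as recorded in the proof of Corollary \ref{cor:homology-above-2-is-0}), the endpoints of a central edge cannot be swapped, so $C_G(P)$ fixes a vertex of $\tilde{\T}^P$. As this vertex has the form $\G(v)g$, it follows that $C_G(P)$ is conjugate to a subgroup of a vertex group $\G(v)$ (equal to $P_1$ or $K$), and the corollary to Proposition \ref{prop:first-homology} then yields $\CGp(P) = 0$.

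With all three hypotheses in hand, Theorem \hyperref[thm:B]{B} gives at once that $\limn[i]_{\OFc}(M) = 0$ for $i \geq 2$ together with the stated description of $\limn[1]_{\OFc}(M)$, and the closing sentence on the cohomology functors $H^j(-;\Fp)$ is precisely the corresponding clause of Theorem \hyperref[thm:B]{B}.

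I expect no real obstacle here, since all the substance already sits in Theorem \hyperref[thm:B]{B}, Lemma \ref{lem:parker-stroth-vanishing-subsystems} and Lemma \ref{lem:quotient-is-tree-Parker-Stroth}; the task is only to assemble them. The one point deserving care is to confirm that Lemma \ref{lem:quotient-is-tree-Parker-Stroth} genuinely covers every $\F$-centric $P$ rather than a cofinal subfamily, and, should one instead follow the Clelland-Parker route of arguing that $\tilde{\T}^P/C_G(P)$ is itself a tree (as via \cite[Lemma 3.3]{ClellandParker10} there), to check that the no-inversion property rules out the folds that would otherwise create a loop in the quotient.
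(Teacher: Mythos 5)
Your proposal is correct and shares the paper's overall skeleton: both proofs reduce the proposition to Theorem \hyperref[thm:B]{B}, with saturation taken from \cite[Lemma 3.2(iii)]{ParkerStroth15}, the containments $\Fcr_{x}\subseteq\Fc$ supplied by (the proof of) Lemma \ref{lem:parker-stroth-vanishing-subsystems}, and the finiteness of the fixed trees $\tilde{\T}^{Y}$ (Lemma \ref{lem:quotient-is-tree-Parker-Stroth}) as the key geometric input. Where you genuinely diverge is in deducing $\CGp\left(Y\right)=0$ from that finiteness. The paper does this in one stroke by citing \cite[Lemma 3.3]{ClellandParker10}: since $\tilde{\T}^{Y}$ is finite, the quotient $\tilde{\T}^{Y}/C_{G}\left(Y\right)$ is a tree, so its first homology vanishes. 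You instead stay inside the paper's own machinery: the center of the finite tree $\tilde{\T}^{P}$ is invariant under $C_{G}\left(P\right)$, the no-inversion property of the $G$-action (recorded in the proof of Corollary \ref{cor:homology-above-2-is-0}) promotes this to a fixed vertex $\G\left(v\right)g$, hence $C_{G}\left(P\right)\le\G\left(v\right)^{g}$, and the corollary following Proposition \ref{prop:first-homology} gives the vanishing. That route is sound and arguably more self-contained, since it avoids re-importing a Clelland--Parker lemma whose hypotheses one would otherwise have to match; the paper's route is shorter. One detail you elide: the corollary also requires $P\le\mathcal{S}\left(v\right)$ for the \emph{same} vertex $v$, which is not automatic when the fixed vertex is a coset of $G_{\boldsymbol{2}}=K$ (then $\mathcal{S}\left(v\right)=S'$, and $P\le S$ need not lie in $S'$). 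The patch is routine: $P$ also fixes $\G\left(v\right)g$, so $P\le\G\left(v\right)^{g}$; conjugating by $g^{-1}$ and then by an element of $\G\left(v\right)$ taking a Sylow $p$-subgroup containing the image of $P$ onto $\mathcal{S}\left(v\right)$ produces an $\F$-conjugate $P'$ of $P$ with $P'\le\mathcal{S}\left(v\right)$ and $C_{G}\left(P'\right)\le\G\left(v\right)$; since $\CGp$ is a functor on the orbit category and $P\cong P'$ there, $\CGp\left(P\right)\cong\CGp\left(P'\right)=0$. With that patch in place, the final assembly via Theorem \hyperref[thm:B]{B} matches the paper's conclusion exactly.
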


\begin{proof}
We know from Lemma \ref{lem:quotient-is-tree-Parker-Stroth} that
$\tilde{\T}^{Y}$ is finite for every $Y\in\Fc$. It follows from
\cite[Lemma 3.3]{ClellandParker10} that $\tilde{\T}^{Y}/C_{G}\left(Y\right)$
is a tree and, therefore, $\CGp\left(Y\right)=0$. The result follows
from Theorem \hyperref[thm:B]{B}.
\end{proof}
\begin{rem}
Proposition \ref{prop:shar-Parker-Stroth} is a special case of \cite[Theorem A]{GraMar23}
and the classification of such fusion systems given in \cite[Theorem 1.1]{ParkerSemeraro18}.
See also \cite[Theorems 1.1 and 5.2(d)]{HLL23} for an alternative
proof.
\end{rem}

\subsection{\protect\label{subsec:Benson-Solomon}The Benson-Solomon fusion systems}

The Benson-Solomon fusion systems are, up to date, the only known
family of exotic fusion systems over the prime $2$. Following the
group theoretic approach taken by Aschbacher and Chermak in \cite{AschCher10},
they can be defined as follows.

Let $\overline{\Ff}$ be the algebraic closure of the finite field
of $5$ elements and let $\Ffb\subseteq\overline{\Ff}$ be the union
of all the subfields of $\overline{\Ff}$ of order $5^{2^{n}}$ for
some $n\ge0$. Define $B^{0}:=\left(\text{SL}_{2}\left(\Ffb\right)\right)^{3}/\left\langle -\left(\Id,\Id,\Id\right)\right\rangle $,
let $S_{3}$ act on $B^{0}$ on the obvious way and define the semidirect
products
\begin{align*}
B & :=\left\langle \left(1,2\right)\right\rangle \ltimes B^{0}, & K & :=\Sym\left(3\right)\ltimes B^{0},
\end{align*}
so that we view $B$ as a subgroup of $K$. It is possible (see \cite[Theorem A]{AschCher10})
to take a group $H$ of $\Ffb$-rational points in $\Spin_{7}\left(\overline{\Ff}\right)$
and a monomorphism $\lambda^{*}:B\to H$ such that the resulting amalgam
$G:=H*_{B}K$ admits an automorphism $\psi_{0}$ satisfying the following:
\begin{enumerate}
\item Every power of $\psi_{0}$ of the form $\sigma=\psi_{0}^{2^{n}}$,
leaves invariant each of the subgroups $H$, $K$ and $B$ of $G$.
In particular, $G_{\sigma}=H_{\sigma}*_{B_{\sigma}}K_{\sigma}$ where
the subindex $\sigma$ denotes the subgroup of elements fixed under
the action of $\sigma$.
\item The group $H_{\sigma}$ is isomorphic to $\Spin_{7}\left(\Fp[5^{2^{n}}]\right)$,
the subgroups $B_{\sigma}$ and $K_{\sigma}$ are finite, $K_{\sigma}=B_{\sigma}\left\langle \left(1,2,3\right)\right\rangle $
and $B_{\sigma}=B_{\sigma}^{0}\left\langle \left(1,2\right)\right\rangle $.
\item There exists a $2$-subgroup $S\le B$ such that $S_{\sigma}\le B$
is a Sylow $2$-subgroup of $B_{\sigma}$, $K_{\sigma}$ and $H_{\sigma}$.
\item The fusion system $\F_{\sigma}=\FAB{S_{\sigma}}{G_{\sigma}}$ is the
Benson-Solomon fusion system $\F_{\Sol}\left(5^{2^{n}}\right)$. In
particular, all Benson-Solomon fusion systems are of this form (see
\cite[Theorem B]{COS08}).
\item There exists a normal subgroup $X\trianglelefteq G$, such that, for
every subgroup $P$ of $S_{\sigma}$, the set 
\[
\theta_{\sigma}\left(P\right):=C_{X_{\sigma}}\left(P\right)\mathcal{O}\left(C_{G_{\sigma}}\left(P\right)\right),
\]
is a subgroup of $C_{G_{\sigma}}\left(P\right)$. Moreover $\theta_{\sigma}\left(P\right)$
is a complement to $Z\left(P\right)$ in $C_{G_{\sigma}}\left(P\right)$
(see \cite[Theorem 8.8]{AschCher10}). In other words $\theta_{\sigma}$
is a signalizer functor for $G_{\sigma}$ (see \cite[Definition 2.5]{AschCher10}).
\end{enumerate}
With this setup we obtain the following.
\begin{lem}
\label{lem:Benson-Solomon-vanishing-subsystems}Let $\mathcal{C}$
be the family of $\F_{\sigma}$-centric subgroups of $S_{\sigma}$,
let $M^{*}$ be the contravariant part of a Mackey functor over $\F_{\sigma}$
with coefficients in $\Fd$ and write $M:=M^{*}\downarrow_{\OFC}^{\OF}$.
Then $\limn[i]_{\OFC[\mathcal{H}]}M\downarrow_{\OFC[\mathcal{H}]}^{\OFC}=0$
for every $i\ge1$ and every $\mathcal{H}\in\left\{ \FAB{S_{\sigma}}{H_{\sigma}},\FAB{S_{\sigma}}{B_{\sigma}},\FAB{S_{\sigma}}{K_{\sigma}}\right\} $.
\end{lem}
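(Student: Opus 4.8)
The plan is to reproduce, step for step, the template of Lemmas \ref{lem:clelland-parker-vanishing-subsystems} and \ref{lem:parker-stroth-vanishing-subsystems}. Each of the three systems $\FAB{S_\sigma}{H_\sigma}$, $\FAB{S_\sigma}{B_\sigma}$ and $\FAB{S_\sigma}{K_\sigma}$ is realizable, since by item (3) of the setup $S_\sigma$ is a Sylow $2$-subgroup of each of $H_\sigma$, $B_\sigma$ and $K_\sigma$; hence \cite[Theorem B]{DiazPark15} yields $\limn[i]_{\OFc[\mathcal{H}]}(M^*\downarrow)=0$ for every $i\ge1$ over the centric orbit category of $\mathcal{H}$ itself. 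By \cite[Proposition 10.5]{Yal22} (see also \cite[Corollary 3.6]{BLO03}) one then has $\limn[i]_{\OFC[\mathcal{H}]}M\downarrow_{\OFC[\mathcal{H}]}^{\OFC}=\limn[i]_{\OFc[\mathcal{H}]}M\downarrow_{\OFc[\mathcal{H}]}^{\OFC}$ for every $i\ge0$, provided $\mathcal{C}=\F_\sigma^c$ contains every $\mathcal{H}$-centric-radical subgroup. Thus everything reduces to the single containment $\mathcal{H}^{\text{cr}}\subseteq\F_\sigma^c$ for each of the three $\mathcal{H}$.

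The heart of the matter is therefore this containment, and here the Benson-Solomon case genuinely departs from the odd-prime families. In those cases a single normal $p$-subgroup that happened to be $\F$-centric forced the containment through Proposition \ref{prop:centric-radical-contains-normal}; here the analogous candidates fail, since $O_2(H_\sigma)=Z(H_\sigma)=Z(S_\sigma)$ has order $2$ and is far from $\F_\sigma$-centric (its centraliser in $S_\sigma$ is all of $S_\sigma$), and $O_2(B_\sigma)$, $O_2(K_\sigma)$ are likewise too small. Instead I would work from the criterion, valid because every system in sight is realizable, that $P\le S_\sigma$ is $\F_\sigma$-centric if and only if $Z(P)\in\Syl_2(C_{G_\sigma}(P))$, and that $P$ is $\mathcal{H}$-centric if and only if $Z(P)$ is a Sylow $2$-subgroup of $C_{G'}(P)$ for the relevant $G'\in\{H_\sigma,B_\sigma,K_\sigma\}$. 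Since $C_{G'}(P)\le C_{G_\sigma}(P)$, the containment $\mathcal{H}^{\text{cr}}\subseteq\F_\sigma^c$ amounts to showing that, for an $\mathcal{H}$-centric-radical $P$, passing from $G'$ to the full amalgam $G_\sigma$ does not enlarge the $2$-part of the centraliser of $P$.

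This is exactly where the signalizer functor $\theta_\sigma$ of the setup enters. The complement property $C_{G_\sigma}(P)=Z(P)\,\theta_\sigma(P)$ of \cite[Theorem 8.8]{AschCher10}, with $\theta_\sigma(P)=C_{X_\sigma}(P)\mathcal{O}(C_{G_\sigma}(P))$ and $\mathcal{O}(-)$ of odd order, shows that $Z(P)\in\Syl_2(C_{G_\sigma}(P))$ precisely when $C_{X_\sigma}(P)$ has odd order. Combining this with Aschbacher and Chermak's explicit determination of the centralisers in $H_\sigma$, $B_\sigma$ and $K_\sigma$, I would verify for each $\mathcal{H}$-centric-radical $P$ that $C_{X_\sigma}(P)$ is a $2'$-group, hence that $P$ is $\F_\sigma$-centric. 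I expect the $H_\sigma\cong\Spin_7(5^{2^n})$ vertex to be the main obstacle: precisely here the quotient graph $\tilde{\T}^P/C_{G_\sigma}(P)$ need not be a tree (as recorded in the introduction), so — unlike the Clelland-Parker and Parker-Stroth cases — one cannot read off centricity from a finite fixed tree, and must instead lean on the signalizer functor together with the known close relationship between the $2$-fusion of $\Spin_7(q)$ and the Benson-Solomon fusion system to control $C_{X_\sigma}(P)$ for the comparatively large supply of $\Spin_7$-centric-radical subgroups.
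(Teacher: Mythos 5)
Your opening reduction is exactly the paper's: realizability of the three subsystems plus \cite[Theorem B]{DiazPark15} and \cite[Proposition 10.5]{Yal22} reduce the lemma to the single containment $\mathcal{H}^{\text{cr}}\subseteq\mathcal{C}=\F_{\sigma}^{\text{c}}$, and your observation that the normal-subgroup trick of Proposition \ref{prop:centric-radical-contains-normal} cannot supply this containment here (since $O_{2}\left(H_{\sigma}\right)=Z\left(H_{\sigma}\right)$ is far too small to be $\F_{\sigma}$-centric) is correct. The gap is that the containment itself is never proved: what follows is a program, and both of its pillars are problematic. First, the criterion ``$P$ is $\F_{\sigma}$-centric if and only if $Z\left(P\right)\in\Syl_{2}\left(C_{G_{\sigma}}\left(P\right)\right)$'' is defended on the grounds that ``every system in sight is realizable,'' but $\F_{\sigma}$ is exotic: it is realized only by the \emph{infinite} amalgam $G_{\sigma}=H_{\sigma}*_{B_{\sigma}}K_{\sigma}$, whose centralizers $C_{G_{\sigma}}\left(P\right)$ are in general infinite, so the finite-group characterization of $p$-centricity — and your subsequent equivalence ``$Z\left(P\right)$ is Sylow if and only if $C_{X_{\sigma}}\left(P\right)$ has odd order'' — does not apply as stated. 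Second, even granting a corrected criterion, the promised verification that $C_{X_{\sigma}}\left(P\right)$ is a $2'$-group for every $\mathcal{H}$-centric-radical $P$, in particular for the large supply of centric-radical subgroups of $H_{\sigma}\cong\Spin_{7}\left(5^{2^{n}}\right)$, is precisely the hard content of the lemma, and it is left at the level of ``lean on the known close relationship.''

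The paper's proof shows that no such centralizer computation is needed, and it inverts your guess about where the difficulty sits. Writing $Z:=Z\left(S_{\sigma}\right)$, one has $\FAB{S_{\sigma}}{H_{\sigma}}=C_{\F_{\sigma}}\left(Z\right)$ by \cite[Proposition 9.2]{AschCher10}, so \cite[Proposition 2.5]{BLO03} gives $\FAB{S_{\sigma}}{H_{\sigma}}^{\text{c}}=\F_{\sigma}^{\text{c}}$ at once: the $\Spin_{7}$ vertex, which you flagged as ``the main obstacle,'' is the easiest case and is disposed of in one line. Likewise, since $B_{\sigma}=K_{\sigma}\cap H_{\sigma}$ and the element $\left(1,2,3\right)$ of $K_{\sigma}$ does not centralize $Z$, one gets $\FAB{S_{\sigma}}{B_{\sigma}}=C_{\FAB{S_{\sigma}}{K_{\sigma}}}\left(Z\right)$ and hence $\FAB{S_{\sigma}}{B_{\sigma}}^{\text{c}}=\FAB{S_{\sigma}}{K_{\sigma}}^{\text{c}}$, reducing the edge group to the $K$-vertex. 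The only point where explicit subgroup data enters is $\FAB{S_{\sigma}}{K_{\sigma}}^{\text{cr}}$, and there the paper simply quotes \cite[Lemmas 10.2 and 10.9]{AschCher10} (see also the tables of \cite{LyndSemeraro23}), which list these subgroups and show they are $\F_{\sigma}$-centric-radical, hence $\F_{\sigma}$-centric. Note finally that the signalizer functor $\theta_{\sigma}$, around which your plan is built, plays no role in the proof of this lemma; it enters only afterwards, when Theorem \hyperref[thm:D]{D} is deduced from it via Proposition \ref{prop:first-homology}.
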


\begin{proof}
In order to simplify notation, throughout this proof we write $\F_{X}=\FAB{S_{\sigma}}{X_{\sigma}}$
for $X\in\left\{ H,B,K\right\} $ and, for every $A,B,C\in\text{SL}_{2}\left(\Ffb\right)$,
we denote by $\left\llbracket A,B,C\right\rrbracket $ any element
in the quotient group $B^{0}$.

Let $Z=Z\left(S_{\sigma}\right)=\left\llbracket \Id,\Id,-\Id\right\rrbracket $.
We know from \cite[Proposition 9.2]{AschCher10} that $\F_{H}=C_{\F_{\sigma}}\left(Z\right)$.
Since $B_{\sigma}=K_{\sigma}\cap H_{\sigma}$ and the element $\left(1,2,3\right)$
of $K_{\sigma}$ does not centralize $Z$, then we conclude that $\F_{B}=C_{\F_{K}}\left(Z\right)$.
It follows from \cite[Proposition 2.5]{BLO03} that $\F_{\sigma}^{\text{c}}=\F_{H}^{\text{c}}$
and that $\F_{K}^{\text{c}}=\F_{B}^{\text{c}}$. In particular, since
$\F_{H}\subseteq\F_{\sigma}$ and $\F_{B}\subseteq\F_{K}$, then $\F_{H}^{\text{cr}}\subseteq\F_{\sigma}^{\text{cr}}\subseteq\F_{\sigma}^{\text{c}}$
and $\F_{B}^{\text{cr}}\subseteq\F_{K}^{\text{cr}}$. The elements
in $\F_{K}^{\text{cr}}$ are listed in \cite[Lemma 10.2]{AschCher10}.
On the other hand \cite[Lemma 10.9]{AschCher10} ensures us that these
subgroups are also $\F_{\sigma}$-centric-radical and, in particular,
$\F_{\sigma}$-centric.

It follows from \cite[Proposition 10.5]{Yal22} (see also \cite[Corollary 3.6]{BLO03}),
that $\limn[i]_{\OFC[\mathcal{H}]}M\downarrow_{\OFC[\mathcal{H}]}^{\OFC}=\limn[i]_{\OFc[\mathcal{H}]}M\downarrow_{\OFc[\mathcal{H}]}^{\OFC}$
for every $i\ge0$. Since every $\mathcal{H}$ is a realizable fusion
system, then the result follows from \cite[Theorem B]{DiazPark15}.
\end{proof}
\begin{rem}
The reader might not find it immediate that the $\F_{K}$-centric-radical
subgroups of $S_{\sigma}$ listed in \cite[Lemma 10.2]{AschCher10}
are also listed in \cite[Lemma 10.9]{AschCher10}. An alternative
proof of the fact that all $\F_{K}$-centric-radical subgroups of
$S_{\sigma}$ are $\F_{\sigma}$-centric-radical can be obtained via
the exhaustive list of all such subgroups appearing in \cite[Tables 1, 2 and 4]{LyndSemeraro23}.

Lemma \ref{lem:Benson-Solomon-vanishing-subsystems} allows us to
apply Theorem \hyperref[thm:A]{A} for the Benson-Solomon fusion systems.
By definition of signalizer functor we also know that $\theta_{\sigma}\left(P\right)\cong C_{G_{\sigma}}\left(P\right)/Z\left(P\right)$.
Theorem \hyperref[thm:D]{D} now follows from Proposition \ref{prop:first-homology}.
\end{rem}

\printbibliography

@Article{AschCher10,
  author   = {Aschbacher, Michael and Chermak, Andrew},
  journal  = {Annals of Mathematics},
  title    = {A group-theoretic approach to a family of $2$-local finite groups constructed by {L}evi and {O}liver},
  year     = {2010},
  issn     = {0003-486X},
  number   = {2},
  pages    = {881--978},
  volume   = {171},
  abstract = {We extend the notion of a p-local finite group (defined in [BLO03]) to the notion of a p-local group. We define morphisms of p-local groups, obtaining thereby a category, and we introduce the notion of a representation of a p-local group via signalizer functors associated with groups. We construct a chain 𝔊 = (𝓖₀ → 𝓖₁ → ···) of 2-local finite groups, via a representation of a chain 𝔊 * = (G₀ → G₁ → ···) of groups, such that 𝓖₀ is the 2-local finite group of the third Conway sporadic group Co₃, and for n > 0, 𝓖 n is one of the 2-local finite groups constructed by Levi and Oliver in [LO02]. We show that the direct limit 𝓖 of 𝔊 exists in the category of 2-local groups, and that it is the 2-local group of the union of the chain 𝔊 * . The 2-completed classifying space of 𝓖 is shown to be the classifying space B D I(4) of the exotic 2-compact group of Dwyer and Wilkerson [DW93].},
  url      = {http://www.jstor.org/stable/20752232},
  urldate  = {2023-09-21},
}

@Article{GlaLyn25,
  author   = {Glauberman, George and Lynd, Justin},
  journal  = {Fundamenta Mathematicae},
  title    = {Cohomology on the centric orbit category of a fusion system},
  year     = {2025},
  number   = {2},
  pages    = {187--199},
  volume   = {269},
  abstract = {We study the higher derived limits of mod $p$ cohomology on the centric orbit category of a saturated fusion system on a finite $p$-group. It is an open problem whether all such higher limits vanish. This is known in many cases, including for fusion systems realized by a finite group and for many classes of fusion systems which are not so realized. We prove that the higher limits of $H^j$ vanish provided $j$ is small relative to $p$, by showing that the same is true for the contravariant part of a simple Mackey composition factor of $H^j$ under the same conditions.},
  doi      = {10.4064/fm240612-2-2},
  keywords = {Group Theory (math.GR), Algebraic Topology (math.AT), FOS: Mathematics, 55R40 (Primary) 55R35, 20J06, 20D20 (Secondary)},
  url      = {http://dx.doi.org/10.4064/fm240612-2-2},
}

@Article{BLO03,
  author   = {Broto, Carles and Levi, Ran and Oliver, Bob},
  journal  = {Journal of the American Mathematical Society},
  title    = {The homotopy theory of fusion systems},
  year     = {2003},
  issn     = {0894-0347},
  number   = {4},
  pages    = {779--856},
  volume   = {16},
  doi      = {10.1090/S0894-0347-03-00434-X},
  keywords = {classifying space, p-completion, finite groups, fusion, COMPLETED CLASSIFYING-SPACES, HIGHER LIMITS, FINITE-GROUPS, DECOMPOSITION, EQUIVALENCES, EXTENSIONS, SUBGROUPS, DIAGRAMS, CATEGORY, MODULES},
  url      = {https://doi.org/10.1090/S0894-0347-03-00434-X},
}

@Book{BousKan72,
  author    = {Bousfield, Aldridge K. and Kan, Daniel M.},
  publisher = {Springer-Verlag, Berlin},
  title     = {{H}omotopy limits, completions and localizations},
  year      = {1972},
  isbn      = {9783540381174},
  series    = {Lectures Notes in Mathematics},
  volume    = {304},
}

@Book{AKO11,
  author    = {Aschbacher, Michael and Kessar, Radha and Oliver, Bob},
  publisher = {Cambridge University Press, Cambridge},
  title     = {{F}usion systems in algebra and topology},
  year      = {2011},
  isbn      = {9781139003841},
  series    = {London Mathematical Society Lecture Note Series},
  volume    = {391},
  doi       = {10.1017/CBO9781139003841},
}

@Article{Cher13,
  author  = {Chermak, Andrew},
  journal = {Acta Mathematica},
  title   = {Fusion systems and localities},
  year    = {2013},
  number  = {1},
  pages   = {47--139},
  volume  = {211},
  doi     = {10.1007/s11511-013-0099-5},
  url     = {https://doi.org/10.1007/s11511-013-0099-5},
}

@Article{COS08,
  author  = {Chermak, Andrew and Oliver, Bob and Shpectorov, Sergey},
  journal = {Proceedings of the London Mathematical Society},
  title   = {The linking systems of the {S}olomon 2-local finite groups are simply connected},
  year    = {2008},
  issn    = {0024-6115},
  number  = {3},
  pages   = {209--238},
  volume  = {97},
  doi     = {10.1112/plms/pdn011},
}

@Article{DiazPark15,
  author  = {D{\'{i}}az, Antonio and Park, Sejong},
  journal = {Homology, Homotopy and Applications},
  title   = {Mackey functors and sharpness for fusion systems},
  year    = {2015},
  issn    = {1532-0081},
  number  = {1},
  pages   = {147--164},
  volume  = {17},
  doi     = {10.4310/hha.2015.v17.n1.a7},
  url     = {http://dx.doi.org/10.4310/HHA.2015.v17.n1.a7},
}

@InProceedings{Dwyer98,
  author    = {Dwyer, William G.},
  booktitle = {Group Representations: Cohomology, Group Actions and Topology},
  title     = {Sharp homology decompositions for classifying spaces of finite groups},
  year      = {1998},
  pages     = {197--220},
  publisher = {American Mathematical Society, Providence},
  series    = {Proceedings of symposia in pure mathematics},
  volume    = {63},
  doi       = {10.1090/pspum/063/1603159},
  url       = {http://dx.doi.org/10.1090/pspum/063/1603159},
}

@Book{Hatcher02,
  author    = {Hatcher, Allen},
  publisher = {Cambridge University Press, Cambridge},
  title     = {{A}lgebraic topology},
  year      = {2002},
  address   = {Cambridge},
  isbn      = {9780521795401},
}

@Article{HLL23,
  author  = {Henke, Ellen and Libman, Assaf and Lynd, Justin},
  journal = {Transactions of the London Mathematical Society},
  title   = {Punctured groups for exotic fusion systems},
  year    = {2023},
  issn    = {2052-4986},
  number  = {1},
  pages   = {21--99},
  volume  = {10},
  doi     = {10.1112/tlm3.12054},
}

@Article{JackowskiMcClure92,
  author  = {Jackowski, Stefan and McClure, James},
  journal = {Topology},
  title   = {Homotopy decomposition of classifying spaces via elementary abelian subgroups},
  year    = {1992},
  issn    = {0040-9383},
  number  = {1},
  pages   = {113--132},
  volume  = {31},
  doi     = {https://doi.org/10.1016/0040-9383(92)90065-P},
  url     = {https://www.sciencedirect.com/science/article/pii/004093839290065P},
}

@Article{Puig06,
  author  = {Puig, Lluis},
  journal = {Journal of Algebra},
  title   = {Frobenius categories},
  year    = {2006},
  issn    = {0021-8693},
  number  = {1},
  pages   = {309--357},
  volume  = {303},
  doi     = {https://doi.org/10.1016/j.jalgebra.2006.01.023},
  url     = {https://www.sciencedirect.com/science/article/pii/S0021869306000378},
}

@Article{Robinson07,
  author   = {Robinson, Geoffrey R.},
  journal  = {Journal of Algebra},
  title    = {Amalgams, blocks, weights, fusion systems and finite simple groups},
  year     = {2007},
  issn     = {0021-8693},
  number   = {2},
  pages    = {912--923},
  volume   = {314},
  abstract = {We associate an iterated amalgam of finite groups to a certain class of fusion systems on finite p-groups (p a prime), in such a way that the p-group of the fusion system is a maximal finite p-subgroup of the resulting group, unique up to conjugacy, and, furthermore, the conjugation action of the resulting (usually infinite) group on p-subgroups induces the original fusion system on the p-group.1 In view of earlier work of Puig and of Broto, Castellana, Grodal, Levi and Oliver [C. Broto, N. Castellana, J. Grodal, R. Levi, R. Oliver, Subgroup families controlling p-local finite groups, Proc. London Math. Soc. (3) 91 (2005) 325–354], the fusion systems we deal with include all saturated fusion systems. 1Since this paper was written, we have been informed that I. Leary and R. Stancu are currently writing a paper with a different construction to realise a fusion system on a finite p-group via a group. Their work and ours are independent of each other. The resulting amalgam has free normal subgroups of finite index, and we examine the images of the group by its maximal free normal subgroups of finite index; these images all contain (isomorphic copies of) the original p-group (and are generated by the (images of the) finite groups used in the amalgamation). If there is no non-trivial normal p-subgroup of the fusion system (equivalently, if the iterated amalgam constructed has no non-trivial normal p-subgroup), then the generalised Fitting subgroup of each of these homomorphic images is a direct product of non-Abelian simple groups, each of order divisible by p (and if there is no proper non-trivial strongly closed p-subgroup in the fusion system, then the generalised Fitting subgroup of any of these finite groups is characteristically simple). We note that the (finite-dimensional) representation theory of this amalgam is (almost by construction) p-locally determined. In the case of the fusion system associated to a p-block of a finite group, we suggest strong links between block-theoretic invariants of the above finite epimorphic images of the associated amalgam and of the original block. We believe that the results of this paper offer the prospect of at least the beginnings of a structural explanation for some of the current numerical conjectures in block theory (e.g. [J.L. Alperin, Weights for finite groups, in: Proc. Sympos. Pure Math., vol. 47, Amer. Math. Soc., Providence, 1987, pp. 369–379; E.C. Dade, Counting characters in blocks, I, Invent. Math. 109 (1) (1992) 187–210]).},
  doi      = {https://doi.org/10.1016/j.jalgebra.2007.05.010},
  keywords = {Groups, Representations, Blocks, Fusion},
  url      = {https://www.sciencedirect.com/science/article/pii/S0021869307003080},
}

@Book{Serre80,
  author    = {Serre, Jean-Pierre},
  publisher = {Springer-Verlag, Berlin},
  title     = {{T}rees},
  year      = {1980},
  isbn      = {3540101039},
  doi       = {10.1007/978-3-642-61856-7},
}

@Book{Weibel94,
  author    = {Weibel, Charles A.},
  publisher = {Cambridge University Press, Cambridge},
  title     = {{A}n introduction to homological algebra},
  year      = {1994},
  isbn      = {0-521-43500-5},
  series    = {Cambridge studies in advanced mathematics},
  volume    = {38},
  doi       = {10.1017/CBO9781139644136},
}

@Article{Yal22,
  author   = {Yal{\c{c}}in, Erg{\"{u}}n},
  journal  = {Advances in Mathematics},
  title    = {Higher limits over the fusion orbit category},
  year     = {2022},
  issn     = {0001-8708},
  note     = {Article n. 108482},
  volume   = {406},
  abstract = {The fusion orbit category F‾C(G) of a discrete group G over a collection C is the category whose objects are the subgroups H in C, and whose morphisms H→K are given by the G-maps G/H→G/K modulo the action of the centralizer group CG(H). We show that the higher limits over F‾C(G) can be computed using the hypercohomology spectral sequences coming from the Dwyer G-spaces for centralizer and normalizer decompositions for G. If G is the discrete group realizing a saturated fusion system F, then these hypercohomology spectral sequences give two spectral sequences that converge to the cohomology of the centric orbit category Oc(F). This allows us to apply our results to the sharpness problem for the subgroup decomposition of a p-local finite group. We prove that the subgroup decomposition for every p-local finite group is sharp (over F-centric subgroups) if it is sharp for every p-local finite group with nontrivial center. We also show that for every p-local finite group (S,F,L), the subgroup decomposition is sharp if and only if the normalizer decomposition is sharp.},
  doi      = {https://doi.org/10.1016/j.aim.2022.108482},
  keywords = {Fusion systems, Higher limits, Orbit category, -local finite group, Cohomology of small categories},
  url      = {https://www.sciencedirect.com/science/article/pii/S0001870822002997},
}

@Book{Rot79,
  author    = {Rotman, Joseph J.},
  publisher = {Springer New York, New York},
  title     = {An {I}ntroduction to {H}omological {A}lgebra},
  year      = {2009},
  edition   = {Second},
  isbn      = {9780387245270},
  series    = {Universitext},
  doi       = {10.1007/978-0-387-68324-9},
}

@Article{Semeraro14,
  author    = {Semeraro, Jason},
  journal   = {Journal of Algebra},
  title     = {Trees of fusion systems},
  year      = {2014},
  issn      = {0021-8693},
  pages     = {1051--1072},
  volume    = {399},
  doi       = {10.1016/j.jalgebra.2013.10.024},
  publisher = {Elsevier BV},
  url       = {http://dx.doi.org/10.1016/j.jalgebra.2013.10.024},
}

@Article{BLO06,
  author    = {Broto, Carles and Levi, Ran and Oliver, Bob},
  journal   = {An Alpine Anthology of Homotopy Theory},
  title     = {A geometric construction of saturated fusion systems},
  year      = {2006},
  issn      = {0271-4132},
  pages     = {11--40},
  doi       = {10.1090/conm/399/07510},
  publisher = {American Mathematical Society},
}

@Article{ClellandParker10,
  author   = {Clelland, Murray and Parker, Chris},
  journal  = {Journal of Algebra},
  title    = {Two families of exotic fusion systems},
  year     = {2010},
  issn     = {0021-8693},
  pages    = {287--304},
  volume   = {323},
  abstract = {We construct two infinite families of exotic fusion systems.},
  doi      = {https://doi.org/10.1016/j.jalgebra.2009.07.023},
  keywords = {Fusion systems, Group theory},
  url      = {https://www.sciencedirect.com/science/article/pii/S0021869309004323},
}

@Article{DiazRuizViruel07,
  author  = {D{\'{\i}}az, Antonio and Ruiz, Albert and Viruel, Antonio},
  journal = {Transactions of the American Mathematical Society},
  title   = {All $p$-local finite groups of rank two for odd prime $p$},
  year    = {2007},
  issn    = {0002-9947},
  number  = {4},
  pages   = {1725--1764},
  volume  = {359},
  doi     = {10.1090/s0002-9947-06-04367-4},
  url     = {https://www.ams.org/journals/tran/2007-359-04/S0002-9947-06-04367-4/},
}

@Article{GraPar25,
  author    = {Grazian, Valentina and Parker, Chris},
  journal   = {Memoirs of the American Mathematical Society},
  title     = {Saturated {F}usion {S}ystems on $p$-{G}roups of {M}aximal {C}lass},
  year      = {2025},
  issn      = {1947-6221},
  number    = {1549},
  volume    = {307},
  doi       = {10.1090/memo/1549},
  publisher = {American Mathematical Society (AMS)},
  url       = {https://doi.org/10.1090/memo/1549},
}

@Article{GPSB25,
  author  = {Grazian, Valentina and Parker, Chris and Semeraro, Jason and van Beek, Martin},
  journal = {preprint: arXiv: 2502.20873},
  title   = {Fusion systems related to polynomial representations of {$SL_2(q)$}},
  year    = {2025},
  doi     = {10.48550/arXiv.2502.20873},
  url     = {https://doi.org/10.48550/arXiv.2502.20873},
}

@Article{ParkerSemeraro18,
  author    = {Parker, Chris and Semeraro, Jason},
  journal   = {Mathematische Zeitschrift},
  title     = {Fusion systems over a {S}ylow $p$-subgroups of {$G_2(p)$}},
  year      = {2018},
  issn      = {1432-1823},
  month     = nov,
  number    = {1–2},
  pages     = {629--662},
  volume    = {289},
  doi       = {10.1007/s00209-017-1969-x},
  publisher = {Springer Science and Business Media LLC},
  url       = {https://doi.org/10.1007/s00209-017-1969-x},
}

@Article{VanBeek25,
  author    = {Van Beek, Martin},
  journal   = {Mathematical Proceedings of the Cambridge Philosophical Society},
  title     = {Exotic fusion systems related to sporadic simple groups},
  year      = {2025},
  issn      = {0305-0041},
  pages     = {1--54},
  abstract  = {We describe several exotic fusion systems related to the sporadic simple groups at odd primes. More generally, we classify saturated fusion systems supported on Sylow 3-subgroups of the Conway group Co1 and the Thompson group F3, and a Sylow 5-subgroup of the Monster M, as well as a particular maximal subgroup of the latter two p-groups. This work is supported by computations in MAGMA.},
  doi       = {https://doi.org/10.1017/S030500412500009X},
  keywords  = {Fusion systems, Group theory},
  publisher = {Cambridge University Press},
  url       = {https://www.cambridge.org/core/journals/mathematical-proceedings-of-the-cambridge-philosophical-society/article/exotic-fusion-systems-related-to-sporadic-simple-groups/2F7F2A026E7A846C3F0022A8B6A38496},
}

@Article{Bass93,
  author    = {Bass, Hyman},
  journal   = {Journal of Pure and Applied Algebra},
  title     = {Covering theory for graphs of groups},
  year      = {1993},
  issn      = {0022-4049},
  number    = {1–2},
  pages     = {3--47},
  volume    = {89},
  doi       = {10.1016/0022-4049(93)90085-8},
  publisher = {Elsevier BV},
}

@Article{Symonds05,
  author    = {Symonds, Peter},
  journal   = {Journal of Pure and Applied Algebra},
  title     = {The {B}redon cohomology of subgroup complexes},
  year      = {2005},
  issn      = {0022-4049},
  month     = jul,
  number    = {1–3},
  pages     = {261--298},
  volume    = {199},
  doi       = {10.1016/j.jpaa.2004.12.010},
  publisher = {Elsevier BV},
  url       = {https://doi.org/10.1016/j.jpaa.2004.12.010},
}

@Article{GraMar23,
  author  = {Grazian, Valentina and Marmo, Ettore},
  journal = {Homology, Homotopy and Applications},
  title   = {Sharpness of saturated fusion systems on a {S}ylow {$p$}-subgroup of {$G_2(p)$}},
  year    = {2023},
  issn    = {1532-0081},
  number  = {2},
  pages   = {329--342},
  volume  = {25},
  doi     = {10.4310/HHA.2023.v25.n2.a14},
  url     = {https://dx.doi.org/10.4310/HHA.2023.v25.n2.a14},
}

@Article{ParkerStroth15,
  author    = {Parker, Chris and Stroth, Gernot},
  journal   = {Archiv der Mathematik},
  title     = {A family of fusion systems related to the groups {${S}_{p_4}\left(p^a\right)$} and {${G}_2\left(p^a\right)$}},
  year      = {2015},
  issn      = {1420-8938},
  month     = mar,
  number    = {4},
  pages     = {311--323},
  volume    = {104},
  doi       = {10.1007/s00013-015-0751-8},
  publisher = {Springer Science and Business Media LLC},
  url       = {https://doi.org/10.1007/s00013-015-0751-8},
}

@Book{Craven11,
  author    = {Craven, David A.},
  publisher = {Cambridge University Press, Cambridge},
  title     = {{T}he theory of fusion systems: an algebraic approach},
  year      = {2011},
  isbn      = {9781107005969},
  series    = {Cambridge Studies in Advanced Mathematics},
  volume    = {131},
  doi       = {10.1017/cbo9780511794506},
}

@Article{LyndSemeraro23,
  author  = {Lynd, Justin and Semeraro, Jason},
  journal = {Cambridge University press},
  title   = {Weights in a {B}enson-{S}olomon block},
  year    = {2023},
  pages   = {60--90},
  volume  = {11},
}

@Article{CarrDiaz25,
  author    = {Carri{\'{o}}n S., Guille and D{\'{i}}az, Antonio},
  journal   = {Revista de la Real Academia de Ciencias Exactas, Físicas y Naturales. Serie A. Matemáticas},
  title     = {Mackey functors for posets},
  year      = {2025},
  number    = {40},
  volume    = {119},
  abstract  = {We characterize coﬁbrant objects in the category of functors indexed in a ﬁltered poset and we show that these objects are acyclic. As a consequence, we show that Mackey functors over posets are also acyclic, where we deﬁne this type of Mackey functors mimicking the classical notion. As application, we study homotopy colimits over posets and we give a homology decomposition for the classifying space of the Bianchi group Γ1 .},
  doi       = {10.1007/s13398-025-01704-5},
  keywords  = {Algebraic Topology (math.AT), FOS: Mathematics, Primary: 55P99, Secondary: 55R35, 55N30, 18G10},
  publisher = {Springer Science and Business Media LLC},
  url       = {https://link.springer.com/article/10.1007/s13398-025-01704-5#citeas},
}

\end{document}